\newcommand{\koo}{\mathcal K_0^n}
\newcommand{\ko}{\mathcal K^n}
\newcommand{\rnnn}{\mathbb R^n}
\newcommand{\ry}{\mathbb R}
\newcommand{\sn}{ {\mathbb{S}^{n-1}}}
\newcommand{\psum}{{+_{\negthinspace\kern-2pt p}}\,}
\newcommand{\qsum}[1]{{+_{\negthinspace\kern-2pt #1}}\,}
\newcommand{\dpsum}{{\tilde+_{\negthinspace\kern-1pt p}}\,}
\newcommand{\dqsum}[1]{{\tilde+_{\negthinspace\kern-1pt #1}}\,}
\newcommand{\lsub}[1]{\hskip -1.5pt\lower.5ex\hbox{$_{#1}$}}
\numberwithin{equation}{section}
\newtheorem{theo}{Theorem}[section]
\newtheorem{coro}[theo]{Corollary}
\newtheorem{lem}[theo]{Lemma}
\newtheorem{prop}[theo]{Proposition}
\newtheorem{rem}[theo]{Remark} \theoremstyle{definition}
\begin{document}

\title{The $L_{p}$-Brunn-Minkowski inequalities\\
 for variational functionals with $0\leq p<1$
}
\author[J. Hu]{Jinrong Hu}
\address{Institut f\"{u}r Diskrete Mathematik und Geometrie, Technische Universit\"{a}t Wien, Wiedner Hauptstrasse 8-10, 1040 Wien, Austria
 }
\email{jinrong.hu@tuwien.ac.at}

\begin{abstract}
The infinitesimal forms of the $L_{p}$-Brunn-Minkowski inequalities for variational functionals, such as the $q$-capacity, the torsional rigidity, and the first eigenvalue of the Laplace operator, are investigated for $p \geq 0$. These formulations yield  Poincar\'{e}-type inequalities related to these functionals. As an application, the $L_{p}$-Brunn-Minkowski inequalities for torsional rigidity with $0 \leq p < 1$ are confirmed for small smooth perturbations of the unit ball.
\end{abstract}
\keywords{$L_{p}$-Brunn-Minkowski inequality, Variational functionals}

\subjclass[2010]{52A20, 26D10}

\thanks{This work is supported by the Austrian Science Fund (FWF): Project P36545.}

\maketitle

\baselineskip18pt

\parskip3pt

\section{Introduction}

The classical Minkowski problem of prescribing the surface area measure and the Brunn-Minkowski inequality for volume are two foundational results in the Brunn-Minkowski theory. The former was originally proposed and addressed by Minkowski himself \cite{M897,M903}. This problem has seen significant progress through a series of papers (see, e.g., ~\cite{A42,A39,B87,FJ38}), which primarily studied the existence, regularity and uniqueness. The latter states that if $\Omega_{1}, \Omega_{2}\in\ko$  and $t\in [0,1]$, then
\begin{equation}\label{VE}
V((1-t)\Omega_{1}+t\Omega_{2})^{1/n}\geq (1-t)V(\Omega_{1})^{1/n}+t V(\Omega_{2})^{1/n},
\end{equation}
with equality if and only if $\Omega_{1}$ and $\Omega_{2}$ are homothetic. It can be expressed in an equivalent form
\begin{equation}\label{VE2}
V((1-t)\Omega_{1}+t\Omega_{2})\geq V(\Omega_{1})^{1-t}V(\Omega_{2})^{t},
\end{equation}
and for $t\in (0,1)$, there is equality if and only if $\Omega_{1}$ and $\Omega_{2}$ are translates. For standard references on the proof of \eqref{VE}, see \cite{G06,S14}. \eqref{VE} yields the uniqueness result for the classical Minkowski problem. In this beautiful survey, Gardner \cite{Ga02} illustrated that \eqref{VE} has many applications in \emph{Geometry and Analysis}. For instance,  the classical isoperimetric inequality can be deduced by using \eqref{VE}. Besides, by means of \eqref{VE}, Colesanti \cite{CA08} obtained Poincar\'{e} type inequalities on the boundary of convex bodies. The Poincar\'{e} and Brunn-Minkowski inequalities on the boundary of weighted Riemannian manifolds were derived by Kolesnikov-Milman \cite{KM18}.

As an extension of the Brunn-Minkowski theory, the $L_p$-Brunn-Minkowski theory has been actively investigated since the 1990s but dates back to the 1950s. In 1960s, Firey \cite{F62} extended the Minkowski combination of convex bodies to the $L_{p}$-Minkowski sum $(1-t)\cdot \Omega_{1} +_{p}t \cdot \Omega_{2} $ for $\Omega_{1}, \Omega_{2}\in \koo$, $p>1$, $t\in [0,1]$ and established the following $L_{p}$-Brunn-Minkowski inequality when $p> 1$:
\begin{equation}\label{}
V((1-t)\cdot  \Omega_{1}+_pt \cdot \Omega_{2})^{\frac{p}{n}}\geq (1-t)V( \Omega_{1})^{\frac{p}{n}}+t V(\Omega_{2})^{\frac{p}{n}},
\end{equation}
with equality if and only if $\Omega_{1}$ and $\Omega_{2}$ are dilates. The $L_{p}$ Minkowski problem of prescribing the $L_{p}$ surface area measure was first introduced and solved by Lutwak ~\cite{L93}. The case $p=1$ is the classical Minkowski problem, and the case $p=0$ is the log-Minkowski problem first solved by B\"{o}r\"{o}czky-Lutwak-Yang-Zhang \cite{BLZ13}. Building upon Lutwak's seminal work, the $L_{p}$ Minkowski problem has been the breeding ground with many meaningful outputs. However, the uniqueness result of the $L_{p}$ Minkowski problem for $p\in [0,1)$ is not completely solved; the reason behind this is that whether the $L_{p}$-Brunn-Minkowski inequality for $p\in[0,1)$ holds is  open at large. In this regard, B\"{o}r\"{o}czky-Lutwak-Yang-Zhang \cite{BLZ12}  proposed the following conjectured $L_{p}$-Brunn-Minkowski inequality for $p\in [0,1)$.

{\bf The conjectured $L_{p}$-Brunn-Minkowski inequality for $p\in [0,1)$.} The $L_{p}$-Brunn-Minkowski inequality holds for all origin-symmetric convex bodies $\Omega_{1}$, $\Omega_{2}$ in $\rnnn$:
\begin{equation}\label{CJ1}
V((1-t)\cdot \Omega_{1}+_pt \cdot \Omega_{2})^{\frac{p}{n}}\geq (1-t)V(\Omega_{1})^{\frac{p}{n}}+t V(\Omega_{2})^{\frac{p}{n}}, \ \forall t\in[0,1].
\end{equation}
In the case $p=0$, called the log-Brunn-Minkowski conjecture, \eqref{CJ1} is interpreted in the limiting sense as
\begin{equation}\label{CJ2}
V((1-t)\cdot \Omega_{1}+_0t \cdot \Omega_{2})\geq V(\Omega_{1})^{1-t}V(\Omega_{2})^{t}, \ \forall t\in[0,1].
\end{equation}

Note that \eqref{CJ2} is stronger than \eqref{VE2}, and the $L_{p}$-Brunn-Minkowski inequality is equivalent to the related $L_{p}$-Minkowski inequality. The above-conjectured inequalities were established for plane convex bodies in \cite{BLZ12}. Kolesnikov-Milman \cite{KM22} made important progress towards \eqref{CJ1} and \eqref{CJ2} by establishing the local versions of these inequalities.  Chen-Huang-Li-Liu \cite{CLHL20} extended the local results of Kolesnikov-Milman to the global version for $p$ close to 1. Recently, Ivaki-Milman \cite{IM24} obtained the even $L_{p}$-Minkowski inequalities and established the uniqueness in the even $L_{p}$-Minkowski problems for $p<1$, under a curvature pinching condition. See also \cite{CLM17,GN17,HI24,Iva23,IM24-b}.

Several notable studies have explored extensions and analogues of the aforementioned Minkowski type problems and Brunn-Minkowski type inequalities associated with various geometric invariants within the \emph{Calculus of Variations}. These include investigations into the Newtonian capacity, the torsional rigidity, the first
eigenvalue of the Laplacian and their generalizations, see, e.g., \cite{CA05,CF10,FZH20,Hu24,HL21,HSX18,HYZ18,J91,J96} and the references therein. In this paper, we aim to contribute to this research area. Recall that within the context of a compact convex set $\Omega \subset \mathbb{R}^{n}$ ($n\geq3$), the Newtonian capacity $C(\Omega)$ of $\Omega$ is defined as
\begin{equation}\label{cdf1*}
C(\Omega)=\inf\left\{\int_{\rnnn}|\nabla U|^{2}dX: U\in C^{\infty}_{c}({\rnnn})\ {\rm and} \ U\geq 1\ {\rm on}\ \Omega \right\}.
\end{equation}

From the perspective of analysis, by \cite{L77},  there exists a unique function $U\in W^{1,2}_{0}({\rnnn} \backslash \bar{\Omega})$ such that
\begin{equation}\label{cdf2*}
C(\Omega)=\int_{{\rnnn} \backslash \bar{\Omega}
}|\nabla U|^{2}dX,
\end{equation}
where the associated equilibrium potential $U$ satisfies the Euler-Lagrange equation,
\begin{equation*}\label{capequ*2}
\left\{
\begin{array}{lr}
\Delta U(X)=0, & X\in {\rnnn} \backslash \bar{\Omega}, \\
U(X)=1,  & X\in  \partial \Omega ,\\
\lim_{|X|\rightarrow \infty}U(X)=0.
\end{array}\right.
\end{equation*}
Jerison \cite{J96} established the Hadamard variational formula of $C(\cdot)$, which is given as
\begin{equation*}\label{torhadma}
\frac{d}{dt}C(\Omega+t\Omega_{1})\Big|_{t=0}=\int_{\sn}h_{\Omega_{1}}(\xi){d}\mu^{cap}(\Omega,\xi),
\end{equation*}
here the capacitary measure $\mu^{cap}(\Omega,\eta)$ is defined by
\begin{equation}\label{torme2}
\mu^{cap}(\Omega,\eta)=\int_{\nu^{-1}_{\Omega}(\eta)}|\nabla U(X)|^{2}{d}\mathcal{H}^{n-1}(X)=\int_{\eta}|\nabla U(\nu^{-1}_{\Omega}(\xi))|^{2}{d}S(\Omega,\xi)
\end{equation}
for every Borel subset $\eta\subset {\sn}$, $S(\Omega,\cdot)$ is the surface area measure of $\Omega$. Now, one sees that $\nabla U$ has finite non-tangential limit $\mathcal{H}^{n-1}$ a.e. on $\partial \Omega$ by using the estimates of harmonic functions proved by Dahlberg \cite{D77} and $|\nabla U|\in L^{2}(\partial \Omega, \mathcal{H}^{n-1})$, which imply that \eqref{torme2} is well-defined a.e. on the unit sphere ${\sn}$.

Jerison \cite{J96} proposed the Minkowski problem characterizing the capacitary measure and dealt with the existence and regularity of the solution via the variational method, which was firstly developed by Aleksandrov ~\cite{A42,A39}. Uniqueness was settled by Caffarelli-Jerison-Lieb in \cite{C96}. The  associated Brunn-Minkowski inequality is stated as follows.

The Brunn-Minkowski inequality for $C$ : given $\Omega_{1}, \Omega_{2}\in \ko$ and $t \in [0,1]$, then
\begin{equation}\label{CP}
C((1-t)\Omega_{1}+t \Omega_{2})^{1/(n-2)}\geq (1-t)C(\Omega_{1})^{1/(n-2)}+t C(\Omega_{2})^{1/(n-2)},
\end{equation}
with equality if and only if $\Omega_{1}$ and $\Omega_{2}$ are homothetic. A proof of \eqref{CP} was given by Borell \cite{B83}; subsequently, the equality case was characterized by Caffarelli-Jerison-Lieb \cite{C96}.

Based on Jerison's work, Colesanti-Nystr\"{o}m-Salani-Xiao-Yang-Zhang \cite{C15} studied the $q$-capacity for $1<q<n$,
\begin{equation}\label{cdf1*}
C_{q}(\Omega)=\inf\left\{\int_{\rnnn}|\nabla U|^{q}dX: U\in C^{\infty}_{c}({\rnnn})\ {\rm and} \ U\geq 1\ {\rm on}\ \Omega \right\}.
\end{equation}
Here $C:=C_{2}$. The associated $q$-equilibrium potential $U$ satisfies the boundary value problem (see e.g., \cite{L77}):
\begin{equation}\label{capequ*}
\left\{
\begin{array}{lr}
\Delta_{q} U(X)=0, & X\in {\rnnn} \backslash \bar{\Omega}, \\
U(X)=1,  & X\in  \partial \Omega ,\\
\lim_{|X|\rightarrow \infty}U(X)=0,
\end{array}\right.
\end{equation}
where $\Delta_{q}U:={\rm div}(|\nabla U|^{q-2}\nabla U)$ is the $q$-Laplace operator. In \cite{C15}, the authors established the Hadamard variational formula of $C_{q}(\cdot)$ as
\begin{equation*}\label{torhadma}
\frac{d}{dt}C_{q}(\Omega+t\Omega_{1})\Big|_{t=0}=(q-1)\int_{\sn}h_{\Omega_{1}}(\xi){d}\mu^{cap_{q}}(\Omega,\xi),
\end{equation*}
where the $q$-capacitary measure $\mu^{cap_{q}}(\Omega,\cdot)$ is defined as
\begin{equation*}\label{tormes2}
\mu^{cap_{q}}(\Omega,\eta)=\int_{\nu^{-1}_{\Omega}(\eta)}|\nabla U(X)|^{q}{d}\mathcal{H}^{n-1}(X)=\int_{\eta}|\nabla U(\nu^{-1}_{\Omega}(\xi))|^{q}{d}S(\Omega,\xi)
\end{equation*}
for every Borel subset $\eta$ of ${\sn}$, and defined the mixed $q$-capacity $C_{q}(\Omega,\Omega_{1})$ as
\[
C_{q}(\Omega,\Omega_{1})=\frac{q-1}{n-q}\int_{\sn}h_{\Omega_{1}}(\xi)d\mu^{cap_{q}}(\Omega,\xi).
\]
The case $q=2$ is classical with landmark contributions in Jerison's work \cite{J96}. Here we write $\mu^{cap}:=\mu^{cap_{2}}$. In \cite{LN07,LN08},  Lewis-Nystr\"{o}m generalized Dahlberg's results \cite{D77} ($q=2$) to the case $1<q<\infty$, this tells us that $\nabla U$ has finite non-tangential limits $\mathcal{H}^{n-1}$ a.e. on $\partial \Omega$ and $|\nabla U|\in L^{q}(\partial\Omega, \mathcal{H}^{n-1})$.

The Minkowski problem of prescribing the $q$-capacitary measure for $1<q<n$ was first introduced in \cite{C15}. There the existence and regularity of the solution were studied when $1<q<2$. Later Akman-Gong-Hineman-Lewis-Vogel \cite{A6} investigated the existence's result to $1<q<n$ and replacing the $q$-capacity by a generalized nonlinear capacity. The uniqueness for $1<q<n$  was settled in \cite{C15} and the associated Brunn-Minkowski inequality is as follows.

 The Brunn-Minkowski inequality for $C_q$ \cite{CS03} due to Colesanti and Salani : given $\Omega_{1}, \Omega_{2}\in\ko$,  for $1<q<n$ and $t \in [0,1]$, then
\begin{equation}\label{}
C_{q}((1-t)\Omega_{1}+t \Omega_{2})^{1/(n-q)}\geq (1-t)C_{q}(\Omega_{1})^{1/(n-q)}+t C_{q}(\Omega_{2})^{1/(n-q)},
\end{equation}
with equality if and only if $\Omega_{1}$ and $\Omega_{2}$ are homothetic.

Later, Zou-Xiong \cite{ZX14} extended  Colesanti-Nystr\"{o}m-Salani-Xiao-Yang-Zhang's work \cite{C15}, they established the $L_{p}$-Hadamard variational formula of $C_{q}$ for $1< p < \infty$ and $1<q<n$ as
 \[
 \frac{d C_{q}(\Omega+_{p}t\cdot \Omega_{1})}{dt}\Bigg|_{t=0}=\frac{q-1}{p}\int_{\sn}h^{p}_{\Omega_{1}}(\xi)d\mu^{cap_{q}}_{p}(\Omega,\xi),
 \]
where the $(p,q)$-capacitary measure is defined as
\begin{equation}\label{pmea1a}
\mu^{cap_{q}}_{p}(\Omega,\eta)=\int_{\nu^{-1}_{\Omega}(\eta)}h_{\Omega}(\nu_{\Omega}(X))^{1-p}{d}\mu^{cap_{q}}(\Omega,\eta)=\int_{\eta}h_{\Omega}(\xi)^{1-p}{d}\mu^{cap_{q}}(\Omega,\eta),
\end{equation}
and defined the mixed $(p,q)$-capacity $C_{p,q}(\Omega,\Omega_{1})$ as
\[
C_{p,q}(\Omega,\Omega_{1})=\frac{q-1}{n-q}\int_{\sn}h^{p}_{\Omega_{1}}(\xi)d\mu^{cap_{q}}_{p}(\Omega,\xi).
\]
Here we write $\mu^{cap}_{p}:=\mu^{cap_{2}}_{p}$. The authors in \cite{ZX14} also settled the existence  of solutions to the  $L_{p}$ Minkowski problem characterizing the $q$-capacitary measure when $p>1$ and $1<q<n$, Xiong-Xiong-Xu \cite{Xiong19} complemented the case $0<p<1$ and $1<q<2$. While the uniqueness is only obtained in the case $p>1$ and $1<q<n$ in \cite{ZX14}, which  is associated with the following inequality:

  The $L_p$-Brunn-Minkowski inequality for $C_q$ \cite{ZX14} : given $\Omega_{1}, \Omega_{2}\in\koo$, for $1< p <\infty$, $1<q<n$ and $t \in [0,1]$, then
\begin{equation}\label{LPQ0}
C_{q}((1-t)\cdot \Omega_{1}+_{p}t \cdot \Omega_{2})^{p/(n-q)}\geq (1-t)C_{q}(\Omega_{1})^{p/(n-q)}+t C_{q}(\Omega_{2})^{p/(n-q)},
\end{equation}
with equality if and only if $\Omega_{1}$ and  $\Omega_{2}$ are dilates.

Very recently, there have been some results on the existence of a solution to the log-Minkowski problem prescribing the cone $q$-capacitary measure in the case $p=0$ in \eqref{pmea1a}. The cone $q$-capacitary measure $G^{cap_{q}}(\Omega,\cdot)$ of a convex body $\Omega$ is defined for a Borel set $\eta\subset \sn$ by
\begin{equation}\label{CTC}
G^{cap_{q}}(\Omega,\eta)=\frac{q-1}{n-q}\int_{X\in \nu^{-1}_{\Omega}(\eta)}h_{\Omega}(\nu_{\Omega}(X))d\mu^{cap_{q}}(\Omega,\eta).
\end{equation}
Here we write $G^{cap}:=G^{cap_{2}}$. Xiong-Xiong \cite{XX22} solved this problem for polytopes. Note that the uniqueness question is open due to lack of a log-Brunn-Minkowski inequality for $q$-capacity --- analogous to the conjectured log-Brunn-Minkowski inequality.

 The torsional rigidity (abbreviated as torsion later) of $\Omega$ is defined as
\begin{equation}\label{cdf1*}
\frac{1}{T(\Omega)}=\inf\left\{\frac{\int_{\Omega}|\nabla U|^{2}{d}X}{(\int_{ \Omega}|U|{d}X)^{2}}:\ U\in W^{1,2}_{0}(\Omega)\ , \int_{\Omega}|U|{d}X> 0\right\}.
\end{equation}
For problem \eqref{cdf1*}, from \cite{CA05}, we know that there exists a unique function $U$ such that
\begin{equation*}\label{cdf2*}
T(\Omega)=\int_{\Omega} U dX,
\end{equation*}
where  $U$ is the solution of the boundary-value problem
\begin{equation}\label{Tor2}
\left\{
\begin{array}{lr}
\Delta U(X)= -1, & X\in \Omega, \\
U(X)=0,  & X\in  \partial \Omega.
\end{array}\right.
\end{equation}

For two arbitrary convex bodies $\Omega,\Omega_{1}$ in ${\rnnn}$, in analogy with the variational formula of volume (see, e.g., ~\cite{S14}),  Colesanti-Fimiani~\cite{CF10} obtained the variational formula of torsion,
\begin{equation}\label{capha*}
\frac{d}{dt}T(\Omega+t\Omega_{1})\Big|_{t=0}=\int_{\partial \Omega}h_{\Omega_{1}}(\nu_{\Omega}(X))|\nabla U(X)|^{2}{d}\mathcal{H}^{n-1}(X),
\end{equation}
where the torsional measure $\mu^{tor}(\Omega,\eta)$ is defined on the unit sphere ${\sn}$ by
\begin{equation}\label{capme*}
\mu^{tor}(\Omega,\eta)=\int_{\nu^{-1}_{\Omega}(\eta)}|\nabla U(X)|^{2}{d}\mathcal{H}^{n-1}(X)=\int_{\eta}|\nabla U(\nu^{-1}_{\Omega}(\xi))|^{2}dS(\Omega, \xi)
\end{equation}
for every Borel subset $\eta$ of ${\sn}$. In addition, they also defined the mixed torsion $T(\Omega,\Omega_{1})$ as
\[
T(\Omega,\Omega_{1})=\frac{1}{n+2}\int_{\sn}h_{\Omega_{1}}(\xi)d\mu^{tor}(\Omega,\xi).
\]
 Moreover, the
 estimates on harmonic function established by Dahlberg \cite{D77} illustrate that $\nabla U$ has finite non-tangential limit at $\mathcal{H}^{n-1}$-a.e. on $\partial \Omega$ with $|\nabla U| \in L^{2}(\partial \Omega, \mathcal{H}^{n-1})$, which is not limited by the smooth condition on $\Omega$. Hence, $\eqref{capme*}$ is well-defined a.e. on the unit sphere ${\sn}$.

The Minkowski problem prescribing the torsional measure was first posed by  Colesanti-Fimiani \cite{CF10}. They proved the existence and uniqueness of the solution up to translations via the variational method. The uniqueness is associated with the following Brunn-Minkowski inequality:

The Brunn-Minkowski inequality for $T$: given $\Omega_{1}, \Omega_{2}\in\ko$, for $t\in [0,1]$, then
\begin{equation}\label{TdE}
T((1-t)\Omega_{1}+t\Omega_{2})^{1/(n+2)}\geq (1-t)T(\Omega_{1})^{1/(n+2)}+tT(\Omega_{2})^{1/(n+2)}.
\end{equation}
The above inequality was first gained by Borell \cite{BC85}, and Colesanti \cite{CA05} proved that equality occurs in \eqref{TdE} if and only if $\Omega_{1}$ is homothetic to $\Omega_{2}$.

Similar to the $L_{p}$ surface area measure (see, e.g., ~\cite{S14}),  Chen-Dai \cite{Chen20} defined the $L_{p}$ torsional measure $\mu^{tor}_{p}(\Omega,\cdot)$ for $p\geq 1$ by
\begin{equation}\label{pmea1}
\mu^{tor}_{p}(\Omega,\eta)=\int_{\nu^{-1}_{\Omega}(\eta)}h_{\Omega}(\nu_{\Omega}(X))^{1-p}{d}\mu^{tor}(\Omega,\eta)=\int_{\eta}h_{\Omega}(\xi)^{1-p}{d}\mu^{tor}(\Omega,\eta)
\end{equation}
for every Borel subset $\eta$ of ${\sn}$. They also defined the $L_{p}$ mixed torsion $T_{p}(\Omega,\Omega_{1})$ as
\[
T_{p}(\Omega,\Omega_{1})=\frac{1}{n+2}\int_{\sn}h^{p}_{\Omega_{1}}(\xi)d\mu^{tor}_{p}(\Omega,\xi).
\]

The $L_{p}$ Minkowski problem prescribing the $L_{p}$ torsional measure was first introduced in \cite{Chen20}. The authors proved the existence and uniqueness of the solution when $p>1$. The uniqueness corresponds to the following inequality:

  The $L_p$-Brunn-Minkowski inequality for $T$ \cite{Chen20}: given $\Omega_{1}, \Omega_{2}\in\koo$, for $1< p <\infty$, and $t \in [0,1]$, then
\begin{equation}\label{LPQI}
T((1-t)\cdot \Omega_{1}+_{p}t \cdot \Omega_{2})^{p/(n+2)}\geq (1-t)T(\Omega_{1})^{p/(n+2)}+t T(\Omega_{2})^{p/(n+2)},
\end{equation}
with equality if and only if $\Omega_{1}$ and  $\Omega_{2}$ are dilates. For the case $0<p<1$, Hu-Liu \cite{HL21} gave a sufficient condition for the existence of a solution to the $L_{p}$ torsional Minkowski problem, while the uniqueness for $0<p<1$ is unknown since there is no related $L_{p}$-Brunn-Minkowski inequality. Specially, if $p=0$, the $L_{p}$ torsional measure of a convex body $\Omega$ corresponds to the
 cone torsional measure $G^{tor}(\Omega,\cdot)$, which is defined for a Borel set $\eta\subset \sn$ by
\begin{equation}\label{CTC}
G^{tor}(\Omega,\eta)=\frac{1}{n+2}\int_{X\in \nu^{-1}_{\Omega}(\eta)}h_{\Omega}(\nu_{\Omega}(X))d\mu^{tor}(\Omega,\eta).
\end{equation}
The Minkowski problem prescribing the cone torsional measure was recently studied by  \cite{LU23} with the evenness assumption and by \cite{Hu24} without the even assumption. The uniqueness of the torsion log-Minkowski problem is also challenging. The key could be to prove a log-Brunn-Minkowski inequality for torsion.

The first eigenvalue of the Laplace operator (abbreviated as first eigenvalue later) $\lambda(\Omega)$ of $\Omega$ is defined as follows
\begin{equation}\label{TAU}
\lambda(\Omega)=\inf \left\{\frac{\int_{\Omega}|\nabla U|^{2}dX}{\int_{\Omega}U^{2} dX}: U\in W^{1,2}_{0}(\Omega), \int_{\Omega}U^{2}dx>0  \right\}.
\end{equation}
Sakaguchi \cite{S88} proved that the infimum is attained for some function $U\in W^{1,2}_{0}(\Omega)$, satisfying
\begin{equation}\label{MA}
\left\{
\begin{array}{lr}
\Delta U(X)= -\lambda U, & X\in \Omega, \\
U(X)=0,  & X\in  \partial \Omega.
\end{array}\right.
\end{equation}
If we restrict the minimizer $U$ with $\int_{\Omega}U^{2} dX=1$, then
\[
\lambda(\Omega)=\int_{\Omega}|\nabla U|^{2}dX.
\]

Jerison \cite{JD96} established the Hadamard variational formula of $\lambda(\cdot)$, which is given as
\begin{equation}\label{torhadma}
\frac{d}{dt}\lambda(\Omega+t\Omega_{1})\Big|_{t=0}=-\int_{\sn}h_{\Omega_{1}}(\xi){d}\mu^{eig}(\Omega,\xi),
\end{equation}
here the eigenvalue measure $\mu^{eig}(\Omega,\eta)$ is defined by
\begin{equation}\label{tormes22}
\mu^{eig}(\Omega,\eta)=\int_{\nu^{-1}_{\Omega}(\eta)}|\nabla U(X)|^{2}{d}\mathcal{H}^{n-1}(X)=\int_{\eta}|\nabla U(\nu^{-1}_{\Omega}(\xi))|^{2}{d}S(\Omega,\xi)
\end{equation}
for every Borel subset $\eta\subset {\sn}$. Based on that, the mixed first eigenvalue  $\lambda(\Omega,\Omega_{1})$ is defined as
\[
\lambda(\Omega,\Omega_{1})=\frac{1}{2}\int_{\sn}h_{\Omega_{1}}(\xi)d\mu^{eig}(\Omega,\xi),
\]
 and   for a Borel set $\eta\subset \sn$, define the cone eigenvalue measure as
\begin{equation}\label{CTC}
G^{eig}(\Omega,\eta)=\frac{1}{2}\int_{X\in \nu^{-1}_{\Omega}(\eta)}h_{\Omega}(\nu_{\Omega}(X))d\mu^{eig}(\Omega,\eta).
\end{equation}

We remark that the Minkowski problem for the eigenvalue measure was also studied in \cite{JD96}. The uniqueness of this problem is related to the following inequality:

The Brunn-Minkowski inequality for $\lambda$ : For $\Omega_{1}, \Omega_{2}\in\ko$ and $t \in [0,1]$, then
\begin{equation}\label{Cta}
\lambda((1-t)\Omega_{1}+t \Omega_{2})^{-1/2}\geq (1-t)\lambda(\Omega_{1})^{-1/2}+t \lambda(\Omega_{2})^{-1/2},
\end{equation}
with equality if and only if $\Omega_{1}$ and $\Omega_{2}$ are homothetic. A proof of \eqref{Cta} was given by Brascamp-Lieb \cite{BL76}, another proof was shown by Borell \cite{B000}, and subsequently, the equality case was characterized by Colesanti \cite{CA05}.

As far as we know, there are very few results on the  $L_{p}$-Brunn-Minkowski inequalities for the above variational functionals with $0\leq p <1$. Very recently, Crasta-Fragal\`a \cite{CF24} made some progress; they proved that if the above cone variational measure is absolutely continuous with constant density and if the body is origin-centered, then the underlying body is a ball. In addition, in conjunction with the Saint-Venant inequality between volume and torsion, they obtained the following inequality for torsion:
\begin{equation*}\label{LTM}
\int_{\sn}\log\frac{h_{\Omega}}{h_{B^{n}}}d\bar{G}^{tor}(B^{n},\cdot)\geq \frac{1}{n+2}\log\frac{T(\Omega)}{T(B^{n})}
\end{equation*}
for a centrally symmetric convex body $\Omega$ and a unit ball $B^{n}$, where $d\bar{G}^{tor}(\Omega,\cdot)=\frac{d G^{tor}(\Omega,\cdot)}{T(\Omega)}$.

In this paper, the infinitesimal formulations on the $L_{p}$-Brunn-Minkowski inequalities of the above variational functionals are derived for $p\geq 0$. As applications,  the local $L_{p}$-Brunn-Minkowski inequalities for torsion with $ 0\leq p<1$ in small enough $C^{\infty}$-perturbations of the unit ball can be derived.

\begin{theo}[The $L_{p}$-Brunn-Minkowski inequalities for torsion with $0< p<1$]\label{main5}
Suppose $n\geq 2$. Let $\varphi\in C^{\infty}(\sn)$ be even. Then there exists a sufficiently  small $\varepsilon_{0}>0$ such that for every $s_{1},s_{2}\in [-\varepsilon_{0},\varepsilon_{0}]$ and for every $t\in [0,1]$,  there is
\begin{equation}\label{LPN}
T(t \cdot \Omega_{1}+_{p}(1-t)\cdot \Omega_{2})^{\frac{p}{n+2}}\geq t T(\Omega_{1})^{\frac{p}{n+2}}+(1-t)T(\Omega_{2})^{\frac{p}{n+2}},
\end{equation}
where $\Omega_{1}$ is the convex body with the support function $h_{1}=(1+s_{1}\varphi)^{\frac{1}{p}}$ and $\Omega_{2}$ is the convex body with the support function $h_{2}=(1+s_{2}\varphi)^{\frac{1}{p}}$.
\end{theo}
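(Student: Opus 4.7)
The plan is to recast the inequality \eqref{LPN} as a concavity statement along a one-parameter family and then verify it by an infinitesimal (spectral) calculation at the ball combined with a continuity argument. Because $h_j^p = 1 + s_j\varphi$ for $j=1,2$, the support function of $t\cdot\Omega_1 +_p (1-t)\cdot\Omega_2$ is exactly $(1+(ts_1+(1-t)s_2)\varphi)^{1/p}$, so the whole $L_p$-segment is traced out by the one-parameter family $\Omega_s$ with support function $h_s := (1+s\varphi)^{1/p}$, $s\in[-\varepsilon_0,\varepsilon_0]$ (convexity of $h_s$ for small $|s|$ is standard given the smoothness of $\varphi$). Setting $F(s) := T(\Omega_s)^{p/(n+2)}$, the desired inequality is equivalent to concavity of $F$ on $[-\varepsilon_0,\varepsilon_0]$, and it suffices to show $F''(s)\leq 0$ throughout that interval.

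First, I would compute $F''(s)$. Using the $L_p$-Hadamard variational formula for torsion, obtained by combining \eqref{capha*} with the defining relation \eqref{pmea1} for $\mu_p^{tor}$, one has $\frac{d}{ds}T(\Omega_s)=\frac{1}{p}\int_{\sn}\varphi\,d\mu_p^{tor}(\Omega_s,\cdot)$. A second differentiation requires knowing how the torsion potential $U_s$ of $\Omega_s$ varies with $s$: the derivative $\dot U_s$ solves a linearized Dirichlet problem with boundary datum determined by $\varphi$ and $h_s$. The result is a quadratic form in $\varphi$ (with coefficients involving $h_s$, $U_s$, and $\nabla U_s$) whose non-positivity is precisely the infinitesimal $L_p$-Brunn–Minkowski inequality for torsion at $\Omega_s$, which is exactly the kind of Poincar\'e-type inequality the paper sets up beforehand.

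Next, I would verify $F''(0)\leq 0$ at the ball. There the torsion potential is $U_0(x)=(1-|x|^2)/(2n)$, so the quadratic form becomes rotationally invariant and reduces to a combination of integrals of $\varphi^2$ and $|\nabla_{\sn}\varphi|^2$ against the uniform measure on $\sn$, with coefficients depending only on $n$ and $p$. Decomposing $\varphi=\sum_k Y_k$ into spherical harmonics with $-\Delta_{\sn}Y_k = k(k+n-2)Y_k$, the evenness of $\varphi$ eliminates every odd $k$; the $k=0$ mode corresponds to pure dilation and yields equality in \eqref{LPN}; and for each even $k\geq 2$ the mode's contribution to $F''(0)$ has the right sign because the eigenvalue $\lambda_2 = 2n$ is large enough to absorb the $(1-p)$-correction uniformly for $p\in[0,1)$. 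This leaves a strictly negative definite form transverse to the one-dimensional kernel of constants.

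Finally I would propagate from $s=0$ to a neighborhood. Standard elliptic regularity applied to \eqref{Tor2} on the smooth family $\Omega_s$ shows that $U_s$, its boundary gradient, and the geometric data of $\partial\Omega_s$ depend continuously on $s$ in the relevant norms, so the coefficients entering $F''(s)$ are continuous in $s$. The strict negative-definiteness of $F''(0)$ modulo the kernel of constants then persists for $|s|\leq\varepsilon_0$ once $\varepsilon_0$ is small enough. The main obstacle I expect is the spectral calculation in Step 2: deriving a usable expression for $\dot U_s$ in terms of the boundary perturbation $\varphi$ (a Dirichlet-to-Neumann type computation) and carefully tracking the $p$-dependent coefficients to confirm that no mode $k\geq 2$ violates the inequality; once this is packaged as a Poincar\'e-type inequality at the ball, the remaining perturbation step is routine.
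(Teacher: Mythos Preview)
Your proposal is correct and follows essentially the same route as the paper: reduce \eqref{LPN} to concavity of $s\mapsto T(\Omega_s)^{p/(n+2)}$ along the one-parameter family, verify strict concavity at the ball via the spherical-harmonic gap (even zero-mean functions have Rayleigh quotient at least $2n>n+2-p$), and propagate to a neighborhood by continuity. The only simplification you may be overestimating is the $\dot U$ term: rather than a mode-by-mode Dirichlet-to-Neumann computation, the paper just uses the divergence theorem to rewrite $\int_{\sn}\varphi^p(\nabla\dot U\cdot\xi)\,d\mathcal H^{n-1}$ as a nonnegative bulk integral $pn\int_{B^n}|\nabla\dot U|^2$ and drops it, so the anticipated ``main obstacle'' never materializes.
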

\begin{theo}[The log-Brunn-Minkowski inequality for torsion]\label{main6}
Suppose $n\geq 2$. Let $\phi\in C^{\infty}(\sn)$ be even. Then there exists a sufficiently small $\varepsilon_{0}>0$ such that for every $s_{1},s_{2}\in [-\varepsilon_{0},\varepsilon_{0}]$ and for every $t\in [0,1]$, there is
\begin{equation}\label{LPN2}
T(t\cdot \Omega_{1}+_{0}(1-t)\cdot \Omega_{2})\geq T(\Omega_{1})^{t}T(\Omega_{2})^{1-t},
\end{equation}
where $\Omega_{1}$ is the convex body with the support function $h_{1}=e^{ s_{1}\phi}$ and $\Omega_{2}$ is the convex body with the support function $h_{2}=e^{s_{2}\phi}$.
\end{theo}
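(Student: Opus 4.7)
The plan is to reduce Theorem \ref{main6} to a one-dimensional concavity statement along the family $\Omega_s$ whose support function is $e^{s\phi}$, and then extract that concavity from the infinitesimal log-Brunn-Minkowski inequality (a Poincar\'e-type spectral inequality at the unit ball) that is supposed to be obtained in the preceding sections. The key observation is that the $L_{0}$-Minkowski combination interacts trivially with this family: if $h_{1} = e^{s_{1}\phi}$ and $h_{2} = e^{s_{2}\phi}$, then
\[
h_{t\cdot\Omega_{1}+_{0}(1-t)\cdot\Omega_{2}} \;=\; h_{1}^{t}h_{2}^{1-t} \;=\; e^{(ts_{1}+(1-t)s_{2})\phi},
\]
so the log-combination is again a member of the same one-parameter family, at parameter $s_{t}:=ts_{1}+(1-t)s_{2}$. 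Setting $g(s):=\log T(\Omega_{s})$, the inequality \eqref{LPN2} becomes $g(ts_{1}+(1-t)s_{2})\geq tg(s_{1})+(1-t)g(s_{2))}$, i.e. the concavity of $g$ on $[-\varepsilon_{0},\varepsilon_{0}]$. It therefore suffices to show $g''(s)\leq 0$ for $|s|$ small.

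Next, I would compute $g''(0)$ via second-order shape calculus for torsion. Differentiating the Hadamard-type formula \eqref{capha*} along the curve $h_{s}=e^{s\phi}$ (so that $\dot h_{0}=\phi$ and $\ddot h_{0}=\phi^{2}$) and using the PDE \eqref{Tor2} for the torsion function on the perturbed domain to extract the shape derivative of $U$, one obtains an explicit expression for $T''(0)$ and hence
\[
g''(0) \;=\; \frac{T''(0)}{T(B^{n})} - \left(\frac{T'(0)}{T(B^{n})}\right)^{2}.
\]
The claim $g''(0)\leq 0$ is exactly the infinitesimal log-Brunn-Minkowski inequality for torsion at the ball, which the paper formulates as a Poincar\'e-type inequality on $\sn$ for the spherical operator linearizing torsion. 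Under the assumption that $\phi$ is even, the coordinate functions $\xi\mapsto\xi_{i}$ (which would yield equality since they correspond to infinitesimal translations of $B^{n}$) are excluded from the admissible test functions, so one expects the strict inequality $g''(0)<0$ for every non-constant even $\phi\in C^{\infty}(\sn)$; the constant case is trivial since $g$ is then affine in $s$.

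Finally, a continuity argument closes the proof: $s\mapsto g''(s)$ depends smoothly on $s$ by standard elliptic regularity for the torsion function on smoothly varying domains, and $g''(0)<0$ for non-constant $\phi$. Hence there exists $\varepsilon_{0}>0$ such that $g''(s)\leq 0$ throughout $[-\varepsilon_{0},\varepsilon_{0}]$, which yields the required concavity of $g$ and therefore \eqref{LPN2}. The main obstacle is Step~2: the derivation of the infinitesimal log-BM inequality. This requires careful second-order shape calculus for $T$ (propagating the Neumann-type identities obtained from $\Delta U = -1$ under the non-linear perturbation $h_{s}=e^{s\phi}$) and a spectral analysis on $\sn$ identifying the resulting quadratic form, with evenness entering precisely to eliminate the zero-modes associated with translations; once this infinitesimal inequality is in hand at the ball, the openness of the "$g''<0$" condition does the rest.
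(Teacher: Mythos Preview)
Your proposal is correct and follows essentially the same route as the paper: reduce \eqref{LPN2} to concavity of $s\mapsto\log T(\Omega_{s})$ along the family $h_{s}=e^{s\phi}$ (this is the content of Lemma~\ref{LE}), verify the strict inequality $g''(0)<0$ for non-constant even $\phi$ via the second variation formula for torsion (Lemma~\ref{fsd7}) together with the spectral gap $\int_{\sn}|\nabla_{\sn}\phi|^{2}\geq 2n\int_{\sn}\phi^{2}$ for zero-mean even functions (Theorem~\ref{TLOG0}), and then use continuity to propagate to a small interval. The only point you leave implicit that the paper also glosses over is that $e^{s\phi}\in\mathcal{S}$ for small $|s|$, so the Wulff shape of $h_{1}^{t}h_{2}^{1-t}$ really has support function $e^{(ts_{1}+(1-t)s_{2})\phi}$; this is immediate from the openness of $\mathcal{S}$ around $h\equiv 1$.
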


\begin{rem}
In fact,  by the monotonically increasing properties of torsion and the $L_{p}$ (Minkowski) sum of convex bodies in $\rnnn$ with $p\geq 0$, one sees that the log-Brunn-Minkowski inequality for torsion leads to the $L_p$-Brunn-Minkowski inequalities for torsion with $0<p<\infty$.
 It is very interesting to strengthen the results of Theorems \ref{main5} and \ref{main6}, and investigate the $L_{p}$-Brunn-Minkowski inequality of the $q$-capacity and first eigenvalue for $0\leq p<1$.

\end{rem}

The organization of this paper is as follows: Sec. \ref{Sec2} includes some preliminaries on convex body and variational functionals. In Sec. \ref{Sec3}, we obtain the first and second order variational formulas of variational functionals under the $L_{p}$ sum. In Sec. \ref{Sec4}, we give a class of (sharp) Poincar\'{e} type inequalities for variational functionals coming from their related $L_{p}$-Brunn-Minkowski inequalities. Theorems \ref{main5} and \ref{main6} shall be proved in Sec. \ref{Sec5}.

\section{Preliminaries}
\label{Sec2}
This section recalls some basics on convex body and variational functionals.
\subsection{Basics of convex bodies}
There are many standard references about the theory of convex bodies, for instance, see Gardner \cite{G06} and Schneider \cite{S14}.

Write ${\rnnn}$ for the $n$-dimensional Euclidean space.   The unit ball in $\rnnn$ is denoted by  $B^n$. We write $\omega_{n}$ for the $n$-dimensional volume of $B^n$. For $Y, Z\in {\rnnn}$, $ Y\cdot Z $ denotes the standard inner product. For vectors $X\in{\rnnn}$, $|X|=\sqrt{ X\cdot X}$ is the Euclidean norm.  Let ${\sn}$ be the unit sphere, and $C({\sn})$ be the set of continuous functions defined on ${\sn}$. The set of even functions in $C^{\infty}(\sn)$ is denoted by $C^{\infty}_{e}(\sn)$. A convex body is a compact convex set of ${\rnnn}$ with a non-empty interior. The set of all convex bodies in $\rnnn$ is denoted by $\ko$. The set of all convex bodies that contain the origin in the interior in $\rnnn$ is denoted by $\koo$. Let $\mathcal{K}^{\infty}_{+,e}$ denote the class of origin-symmetric convex bodies with $C^{\infty}$ smooth boundary and strictly positive Gauss curvature.

If $\Omega\in \koo$, for $\xi\in{\sn}$, the support function of $\Omega$ (with respect to the origin) is defined by
\[
h_{\Omega}(\xi)=\max\{ \xi\cdot Y:Y \in \Omega\}.
\]
We may extend this definition to a homogeneous function of degree one in $\rnnn\backslash \{0\}$ by the equation $h_{\Omega} (\xi)=|\xi|h_{\Omega}\left(\frac{\xi}{|\xi|}\right)$.

The $L_{p}$ sum of  $\Omega_{1},\Omega_{2}\in \koo$ for $a,b>0$ and $p\neq 0$ is defined as (see \cite{F62})
\[
a\cdot \Omega_{1}+_{p} b\cdot \Omega_{2}= \bigcap_{\xi\in \sn}\left\{X\in \rnnn:X\cdot \xi\leq (ah_{\Omega_{1}}(\xi)^{p}+bh_{\Omega_{2}}(\xi)^{p})^{\frac{1}{p}}\right\},
\]
and the $L_{0}$ sum (log-Minkowski sum) of $\Omega_{1},\Omega_{2}\in \koo$ for $a,b>0$ is defined as
\[
a\cdot \Omega_{1}+_{0}b\cdot \Omega_{2}=\bigcap_{\xi\in \sn}\left\{X\in \rnnn:X\cdot\xi\leq h_{\Omega_{1}}(\xi)^{a}h_{\Omega_{2}}(\xi)^{b} \right\}.
\]

 Given a convex body $\Omega$ in $\rnnn$, for $\mathcal{H}^{n-1}$ almost all $X\in \partial \Omega$, the unit outer normal of $\Omega$ at $X$ is unique. In such case, we use $\nu_{\Omega}$ to denote the Gauss map that takes $X\in \partial \Omega$ to its unique unit outer normal.
  Meanwhile, for $\omega\subset {\sn}$, the inverse Gauss map $\nu_{\Omega}$ is expressed as
\begin{equation*}
\nu^{-1}_{\Omega}(\omega)=\{X\in \partial \Omega:  \nu_{\Omega}(X) {\rm \ is \ defined \ and }\ \nu_{\Omega}(X)\in \omega\}.
\end{equation*}

Let $h^{*}:=h_{\Omega^{*}}$ denote the support function of $\Omega^{*}$, where  $\Omega^{*}$ is the polar body of $\Omega$ defined by
\[
\Omega^{*}=\{ X\in \rnnn,X\cdot Y\leq 1 \ for \ all \ Y\in \Omega  \}.
\]
Notice that the Gauss map can be defined on $\partial \Omega$ as
\[
\nu_{\Omega}=\frac{\nabla h^{*}}{|\nabla h^{*}|}.
\]
Due to $h_{\Omega}(\nabla h^{*}(X))=1$ for $X\in \partial \Omega$, it follows that
\[
h_{\Omega}(\nu_{\Omega}(X))=\frac{1}{|\nabla h^{*}|}.
\]

In particular, for a convex body $\Omega$ of class $C^{\infty}_{+}$, i.e., its boundary is $C^{\infty}$-smooth and is of positive Gauss curvature, for simplicity in the subsequence, we abbreviate $\nu^{-1}_{\Omega}$ as $F$, and $h:=h_{\Omega}$. Then the support function of $\Omega$ can be written as
\begin{equation}\label{hhom}
h(\xi)=\xi\cdot F(\xi)=\nu_{\Omega}(X)\cdot X, \ {\rm where} \ \xi\in {\sn}, \ \nu_{\Omega}(X)=\xi \ {\rm and} \ X\in \partial \Omega.
\end{equation}
 Let $\{e_{1},e_{2},\ldots, e_{n-1}\}$ be a local orthonormal frame on ${\sn}$, $h_{i}$ be the first order covariant derivatives of $h_{\Omega}(\cdot)$  with respect to a local orthonormal frame on ${\sn}$. Differentiating \eqref{hhom} with respect to $e_{i}$ , we get
\[
h_{i}=e_{i}\cdot F(\xi)+\xi\cdot F_{i}(\xi).
\]
Since $F_{i}$ is tangent to $ \partial \Omega$ at $F(x)$, we obtain
\begin{equation}\label{Fi}
h_{i}=e_{i}\cdot F(\xi).
\end{equation}
Combining \eqref{hhom} and \eqref{Fi}, we have (see also \cite[p. 97]{U91})
\begin{equation}\label{Fdef}
F(\xi)=\sum_{i} h_{i}e_{i}+h\xi=\nabla_{\sn}h+h\xi.
\end{equation}
Here $\nabla_{\sn}$ is the (standard) spherical gradient. On the other hand, since we can extend $h_{\Omega}(\cdot)$ to $\rnnn$ as a 1-homogeneous function $h(\cdot)$, then restrict the gradient of $h(\cdot)$ on $\sn$, it yields that (see for example, \cite{CY76})
\begin{equation}\label{hf}
\nabla h(\xi)=F(\xi), \ \forall \xi\in{\sn},
\end{equation}
where $\nabla$ is the gradient operator in $\rnnn$. Let $h_{ij}$ be the second-order covariant derivatives of $h_{\Omega}$ with respect to a local orthonormal frame on ${\sn}$. Then, applying \eqref{Fdef} and \eqref{hf}, we have (see, e.g., \cite[p. 382]{J91})
\begin{equation}\label{hgra}
\nabla h(\xi)=\sum_{i}h_{i}e_{i}+h\xi, \quad F_{i}(\xi)=\sum_{j}(h_{ij}+h\delta_{ij})e_{j}.
\end{equation}
Also, the {\rm reverse Weingarten map} of $\Omega$ at $\xi$ is given by
\[
h_{ij}+h\delta_{ij}=D(\nu^{-1}_{\Omega}(\xi)) \quad {\rm on} \ \sn.
\]

We define the matrix
\[
Q(\tilde{h};\xi)=(\tilde{h}_{ij}(\xi)+\tilde{h}(\xi)\delta_{ij})_{i,j=1,\ldots,n-1}, \ \xi\in \sn,
\]
and the set
\begin{equation*}\label{S}
\mathcal{S}=\{\tilde{h}\in C^{\infty}({\sn}):\tilde{h}_{ij}+\tilde{h}\delta_{ij}>0 \  {\rm on} \ \sn \}.
\end{equation*}

The second fundamental form ${\rm II}_{\partial \Omega}$ at $X\in \partial \Omega$ is defined as ${\rm II}_{X}(Y,Z)=\nabla_{Y}\nu_{\Omega}\cdot Z$ with $Y,Z\in T_{X}\partial \Omega$. We write ${\rm II}^{-1}_{\partial \Omega}$ for the inverse $\Pi_{\partial \Omega}$. From the definition of the Weingarten map, one sees that the second fundamental form
of $\partial \Omega$ is equivalent to Weingarten map, i.e., $\Pi_{\partial \Omega}=D \nu_{\Omega}$.

The Gauss curvature of $\partial\Omega$, $\kappa$, i.e., the Jacobian determinant of Gauss map of $\partial\Omega$, is revealed as
\begin{equation*}
\kappa=\frac{1}{\det(h_{ij}+h\delta_{ij})}.
\end{equation*}

\subsection{Basics of variational functionals} For the reader's convenience, we collect some facts about the properties of capacity, torsion and first eigenvalue.

 Now, according to \cite{D77,J91,J96}, we first list some properties corresponding to the solution $U$ of \eqref{capequ*}, as shown below.

For any $ X\in\partial \Omega$, $0<b<1$, the non-tangential cone is defined as
\begin{equation*}\label{capm2}
\Gamma(X)=\left\{Y\in \rnnn \backslash
 \bar{\Omega}:dist(Y,\partial \Omega)> b|X-Y|\right\}.
\end{equation*}

\begin{lem}\label{Nonfin}
Let $\Omega\in \koo$ and $U$ be the solution of \eqref{capequ*} in $\rnnn \backslash
 \bar{\Omega}$. Then, the non-tangential limit
\begin{equation*}\label{capm3}
\nabla U(X)=\lim_{Y \rightarrow X, \ Y \in \Gamma(X)}\nabla U(Y)
\end{equation*}
exists for $\mathcal{H}^{n-1}$ almost all $X\in\partial \Omega$. Furthermore, for $\mathcal{H}^{n-1}$ almost all $X\in \partial \Omega$,
\begin{equation*}
\nabla U(X)=-|\nabla U(X)|\nu_{\Omega}(X)\quad{\rm and}\quad |\nabla U|\in L^{q}(\partial \Omega,  \mathcal{H}^{n-1}).
\end{equation*}
\end{lem}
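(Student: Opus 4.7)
The statement is essentially a boundary-regularity result for $q$-harmonic functions on the exterior of a convex body, and my plan is to reduce it to the citations flagged just before the lemma rather than redo the heavy analysis. The key geometric input is that $\Omega\in\koo$ is convex, hence $\partial\Omega$ is locally the graph of a Lipschitz function; consequently $\rnnn\setminus\bar\Omega$ is an (unbounded) Lipschitz domain on which the Dirichlet problem \eqref{capequ*} makes sense in the usual weak formulation.

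The first main step will be to invoke the Lewis--Nystr\"om boundary regularity theory \cite{LN07,LN08} for $q$-harmonic functions on Lipschitz domains, which extends Dahlberg's classical results \cite{D77} from $q=2$ to the full range $1<q<\infty$. Applied to the unique solution $U$ of \eqref{capequ*}, this theory yields two facts simultaneously: (i) the non-tangential maximal function $N(\nabla U)(X):=\sup_{Y\in\Gamma(X)}|\nabla U(Y)|$ lies in $L^{q}(\partial\Omega,\mathcal H^{n-1})$, and (ii) the non-tangential limit $\nabla U(X)=\lim_{Y\to X,\,Y\in\Gamma(X)}\nabla U(Y)$ exists for $\mathcal H^{n-1}$-a.e.\ $X\in\partial\Omega$. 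Statement (i) immediately gives $|\nabla U|\in L^{q}(\partial\Omega,\mathcal H^{n-1})$ once (ii) is used to identify the limiting boundary value with the pointwise trace. This is the technical heart of the lemma, and the only nontrivial step; everything else is deduced from it by soft arguments.

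Given (ii), the direction of $\nabla U$ on $\partial\Omega$ is pinned down by the Dirichlet data: because $U\equiv 1$ on $\partial\Omega$, the tangential derivative of $U$ along any smooth curve in $\partial\Omega$ vanishes at each differentiability point, so the non-tangential limit of $\nabla U$ must be normal to $\partial\Omega$ at $\mathcal H^{n-1}$-a.e.\ $X$. Hence $\nabla U(X)=c(X)\,\nu_\Omega(X)$ for some scalar $c(X)$ at a.e.\ boundary point. For the sign, observe that by the strong maximum principle (or comparison with a suitable barrier, exploiting $\lim_{|X|\to\infty}U(X)=0$ and $U\equiv 1$ on $\partial\Omega$) one has $0<U<1$ throughout $\rnnn\setminus\bar\Omega$. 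Therefore moving from a boundary point in the outward direction $\nu_\Omega(X)$ into the exterior decreases $U$, giving $c(X)\le 0$ a.e., i.e., $\nabla U(X)=-|\nabla U(X)|\,\nu_\Omega(X)$ for $\mathcal H^{n-1}$-a.e.\ $X\in\partial\Omega$.

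The main obstacle in principle is the a.e.\ existence of the non-tangential limit for the nonlinear operator $\Delta_q$ on a merely Lipschitz boundary, but this is precisely the content of Lewis--Nystr\"om's work; once one accepts that input, the remaining assertions follow from the explicit Dirichlet condition and the maximum principle, as sketched above.
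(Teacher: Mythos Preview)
Your proposal is correct and aligns with the paper's treatment: the paper does not actually prove this lemma but simply records it as a known result, citing \cite{D77,J91,J96} (and, in the introduction, \cite{LN07,LN08} for general $q$), which is precisely the reduction you outline. Your additional remarks on deducing the direction and sign of $\nabla U$ from the constant Dirichlet data and the maximum principle are accurate and fill in the soft steps the paper leaves implicit.
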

With respect to $q$-capacity $C_{q}$ for $1<q<n$, there are the following properties.
\begin{lem}\cite{C15}\label{T1}
(i) It is positively homogenous of order $(n-q)$, i.e.,
\begin{equation*}\label{torhom}
C_{q}(m\Omega_{0})=m^{n-q}C_{q}(\Omega_{0}),\ m> 0.
\end{equation*}

(ii) It is translation invariant. That is
\begin{equation*}\label{tranin}
C_{q}(\Omega_{0} +x_{0})=C_{q}(\Omega_{0}),\ \forall x_{0}\in {\rnnn}.
\end{equation*}

(iii) It is monotonically increasing, i.e., $C_{q}(\Omega_{1})\leq C_{q}(\Omega_{2})$ if $\Omega_{1}\subset \Omega_{2}$.
\end{lem}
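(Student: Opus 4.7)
The plan is to verify each of the three properties directly from the variational definition
\[
C_{q}(\Omega)=\inf\left\{\int_{\R^{n}}|\nabla U|^{q}\,dX:\ U\in C^{\infty}_{c}(\R^{n}),\ U\geq 1\ \text{on}\ \Omega\right\},
\]
using standard scaling and translation substitutions on competitor functions. No deep theorem is needed; the only point to be careful about is that competitors transform into admissible competitors for the transformed body, so that the $\inf$'s match up in both directions.

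For (i) positive homogeneity of order $n-q$, I would fix $m>0$ and, given any admissible $U$ for $\Omega_0$, set $V(X):=U(X/m)$. Then $V\in C^{\infty}_{c}(\R^{n})$ with $V\geq 1$ on $m\Omega_{0}$, and by the chain rule $\nabla V(X)=m^{-1}(\nabla U)(X/m)$. Changing variables $Y=X/m$ (with $dX=m^{n}dY$) gives
\[
\int_{\R^{n}}|\nabla V|^{q}dX=m^{-q}\int_{\R^{n}}|(\nabla U)(X/m)|^{q}dX=m^{n-q}\int_{\R^{n}}|\nabla U|^{q}dY,
\]
so taking the infimum over $U$ yields $C_{q}(m\Omega_{0})\leq m^{n-q}C_{q}(\Omega_{0})$. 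Running the same argument with $m$ replaced by $1/m$ applied to $m\Omega_{0}$ gives the reverse inequality.

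For (ii) translation invariance, I would just do the substitution $V(X):=U(X-x_{0})$: $V$ is an admissible competitor for $\Omega_{0}+x_{0}$ iff $U$ is admissible for $\Omega_{0}$, and the change of variables $Y=X-x_{0}$ shows $\int|\nabla V|^{q}dX=\int|\nabla U|^{q}dY$. Taking infimum gives both inequalities simultaneously.

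For (iii) monotonicity, the key observation is that if $\Omega_{1}\subset\Omega_{2}$, then any $U\in C^{\infty}_{c}(\R^{n})$ with $U\geq 1$ on $\Omega_{2}$ automatically satisfies $U\geq 1$ on $\Omega_{1}$; hence the class of admissible competitors for $\Omega_{1}$ contains that of $\Omega_{2}$, so its infimum can only be smaller. I do not foresee any obstacle: the main thing to watch is simply that the smoothness and compact support of the modified competitor are preserved under the substitutions, which is immediate.
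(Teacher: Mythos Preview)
Your argument is correct and entirely standard: each of the three properties follows immediately from the variational definition of $C_{q}$ via the substitutions you describe, and you have been careful that the transformed functions remain admissible competitors.

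Note, however, that the paper does not actually prove this lemma; it is stated in the preliminaries with a citation to \cite{C15} and no argument is given. So there is nothing to compare your approach against beyond observing that your proof supplies the elementary details that the paper (appropriately, for a background fact) omits.
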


 Let $\Omega\in \koo$ be of class $C^{\infty}_{+}$, and $U$ be the solution of \eqref{capequ*} in $\rnnn \backslash \bar{\Omega}$. By a direct calculation in conjunction with Lemma \ref{Nonfin} and the divergence theorem, for $1<q<n$, \cite{C15} showed that
\begin{equation}\label{capdef2}
C_{q}(\Omega)=\frac{q-1}{n-q}\int_{\partial \Omega}|\nabla U(X)|^{q}(X\cdot \nu_{\Omega}(X)){d}{\mathcal{H}^{n-1}}(X).
\end{equation}
Since for every $f\in C({\sn})$,
\begin{equation*}\label{arsphere}
\int_{{\sn}}f(\xi){d}{S_{\Omega}(\xi)}=\int_{\partial \Omega}f(\nu_{\Omega}(X)){d}{\mathcal{H}^{n-1}}(X).
\end{equation*}
\eqref{capdef2} is equivalent to
\begin{align}\label{captatget2}
C_{q}(h):=C_{q}(\Omega)&=\frac{q-1}{n-q}\int_{{\sn}}|\nabla U(F(\xi))|^{q}h(\xi){d}{S(\Omega,\xi)}\notag\\
&=\frac{q-1}{n-q}\int_{{\sn}}|\nabla U(F(\xi))|^{q}h(\xi)\det(h_{ij}(\xi)+h(\xi)\delta_{ij}){d}{\xi},
\end{align}

Let $\Omega\in \koo$ be of class $C^{\infty}_{+}$, and $U$ be the solution of \eqref{Tor2} in $\Omega$, then $T(\Omega)$ can be expressed as (for more details,  see \cite{CA05,CF10})
\begin{align}\label{11T}
T(h):=T(\Omega)&=\frac{1}{n+2}\int_{{\sn}}|\nabla U(F(\xi))|^{2}h(\xi){d}{S(\Omega,\xi)}\notag\\
&=\frac{1}{n+2}\int_{{\sn}}|\nabla U(F(\xi))|^{2}h(\xi)\det(h_{ij}(\xi)+h(\xi)\delta_{ij}){d}{\xi}.
\end{align}
The torsion is positively homogeneous of order $n+2$ and is translation invariant and is monotonically increasing.

Let $\Omega\in \koo$ be of class $C^{\infty}_{+}$, and $U$ is the solution of \eqref{MA} in $\Omega$, $\lambda(\Omega)$ can be written as (for more details,  see \cite{CA05, JD96})
\begin{align}\label{112T}
\lambda(h):=\lambda(\Omega)&=\frac{1}{2}\int_{{\sn}}|\nabla U(F(\xi))|^{2}h(\xi){d}{S(\Omega,\xi)}\notag\\
&=\frac{1}{2}\int_{{\sn}}|\nabla U(F(\xi))|^{2}h(\xi)\det(h_{ij}(\xi)+h(\xi)\delta_{ij}){d}{\xi}.
\end{align}
The first eigenvalue is homogeneous of order $-2$ and is monotonically decreasing.

\section{ The first and second variational formulas of variational functionals under $L_{p}$ sum}
\label{Sec3}

In this section, we calculate the first and second variational formulas of variational functionals under $L_p$ sum.
\subsection{$q$-capacity}

We first list the following results proved  in  \cite[Lemma 3.3]{C15} or \cite[Lemma 2.5]{J96}. For the reader's convenience, we give the proof process.

\begin{lem}\label{UPO} Suppose $1<q<n$.  Let $\Omega_{t}\in \koo$ be of class $C^{\infty}_{+}$ in ${\rnnn}$ with $|t|< \varepsilon$ for $\varepsilon>0$, and $U(\cdot,t)$ be the solution of \eqref{capequ*} in $\rnnn \backslash \bar{\Omega}_{t}$, then

(i)$(\nabla^{2} U(F(\xi,t),t)e_{i})\cdot e_{j}=-\kappa |\nabla U(F(\xi,t),t)|c_{ij}(\xi,t)$;

(ii)$(\nabla^{2} U(F(\xi,t),t)\xi)\cdot \xi=(q-1)^{-1}\kappa |\nabla U(F(\xi,t),t)|\sum_{i} c_{ii}(\xi,t)$;

(iii)$(\nabla^{2} U(F(\xi,t),t)e_{i})\cdot \xi=-\kappa\sum_{j} |\nabla U(F(\xi,t),t)|_{j}c_{ij}(\xi,t)$,

where $c_{ij}(\cdot,t)$ is the cofactor matrix of $((h_{t})_{ij}+h_{t}\delta_{ij})$.
\end{lem}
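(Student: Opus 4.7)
The plan is to exploit the two pieces of structure that any solution of the exterior $q$-Laplace problem with Dirichlet data $U \equiv 1$ carries at $\partial\Omega_t$: first, that $U$ is constant on the boundary, which encodes the tangential/tangential block of $\nabla^2U$; second, that $\nabla U$ is antiparallel to the outer normal there, which encodes the tangential/normal block; third, that $U$ itself satisfies $\Delta_q U=0$, which will pin down the normal/normal entry. Throughout I will work in an orthonormal frame $\{e_1,\dots,e_{n-1}\}$ on $\sn$ which is normal at the chosen point $\xi$, so that covariant derivatives of the frame vanish tangentially; then $\{e_1,\dots,e_{n-1},\xi\}$ is an orthonormal basis of $\rnnn$ at $F(\xi,t)$, because $T_\xi\sn$ and $T_{F(\xi,t)}\partial\Omega_t$ coincide as subspaces of $\rnnn$. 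Set $a_{ij}(\xi,t)=h_{ij}(\xi,t)+h(\xi,t)\delta_{ij}$, so $F_i=\sum_k a_{ik} e_k$ and $\kappa=1/\det(a_{ij})$, and the cofactor matrix $c_{ij}$ satisfies $a^{-1}_{ij}=\kappa c_{ij}$.

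For part (i), I differentiate the identity $U(F(\xi,t),t)=1$ once in $e_i$ to recover the boundary condition $\nabla U\cdot F_i=0$, hence $\nabla U=-|\nabla U|\xi$ by Lemma \ref{Nonfin}. Differentiating again in $e_j$ gives
\[
F_i\cdot\nabla^2 U\cdot F_j=-\nabla U\cdot F_{ij}=|\nabla U|\,(\xi\cdot F_{ij}).
\]
A short computation of $F_{ij}$ using $F=\sum_k h_k e_k+h\xi$, together with $\partial_j e_k=-\delta_{jk}\xi$ on $\sn$ in the normal frame, yields $\xi\cdot F_{ij}=-a_{ij}$. Substituting $F_i=\sum_k a_{ik}e_k$ on the left turns the identity into $a_{ik}a_{jl}\,(e_k\cdot\nabla^2 U\cdot e_l)=-|\nabla U|\,a_{ij}$; inverting by the cofactor matrix gives $e_i\cdot\nabla^2 U\cdot e_j=-\kappa|\nabla U|\,c_{ij}$, which is (i).

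For part (iii), I differentiate the vector identity $\nabla U(F(\xi,t),t)=-|\nabla U(F(\xi,t),t)|\,\xi$ in the direction $e_i$, using $\partial_i\xi=e_i$ on $\sn$:
\[
\nabla^2 U(F)\cdot F_i=-|\nabla U|_i\,\xi-|\nabla U|\,e_i.
\]
Dotting with $\xi$ eliminates the second term, giving $\sum_k a_{ik}\,(\xi\cdot\nabla^2 U\,e_k)=-|\nabla U|_i$, and inverting $a$ via the cofactor matrix produces (iii). Finally, for (ii), I expand the $q$-Laplace equation in non-divergence form,
\[
|\nabla U|^2\,\Delta U+(q-2)\,(\nabla U)^\top\nabla^2 U\,\nabla U=0,
\]
valid where $|\nabla U|\neq 0$ (which holds on $\partial\Omega_t$ by the $C^\infty_+$ assumption and boundary regularity). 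Using $\nabla U=-|\nabla U|\xi$ the quadratic term is $|\nabla U|^2(\xi\cdot\nabla^2 U\,\xi)$, while the trace decomposes in the basis $\{e_1,\dots,e_{n-1},\xi\}$ as $\Delta U=\sum_i(e_i\cdot\nabla^2 U\,e_i)+(\xi\cdot\nabla^2 U\,\xi)$; substituting the formula from (i) for the tangential block and solving for the normal/normal entry yields (ii).

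I expect the only genuinely delicate point to be the computation $\xi\cdot F_{ij}=-a_{ij}$: one must keep track of the Gauss-formula term $\partial_j e_k=-\delta_{jk}\xi$ coming from the embedding of the sphere in $\rnnn$, and not confuse the Euclidean derivative of the frame with its spherical covariant derivative. Once this is done correctly, (i) falls out, and parts (ii)--(iii) are then elementary consequences obtained by tracing and by tangential differentiation of $\nabla U=-|\nabla U|\xi$.
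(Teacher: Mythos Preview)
Your proof is correct and follows essentially the same route as the paper: differentiate the constancy of $U$ on $\partial\Omega_t$ (equivalently $\nabla U\cdot F_i=0$) to obtain (i) after inverting $a_{ij}$ via its cofactor matrix, differentiate the boundary relation $\nabla U=-|\nabla U|\xi$ tangentially for (iii), and read off (ii) from the non-divergence form of $\Delta_q U=0$ combined with (i). The only cosmetic difference is that the paper writes $F_{ij}=\sum_k w_{ijk}e_k-w_{ij}\xi$ directly rather than isolating the computation $\xi\cdot F_{ij}=-a_{ij}$, but the content is identical.
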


\begin{proof}
First notice that $\Omega_{t}\in \koo$ is of class $C^{\infty}_{+}$, from an elementary barrier argument and boundary Schauder estimate for quasi-linear elliptic equations, see e.g., \cite{GT01}, one sees that partial derivatives of $U$ up to, and including, order two are pointwise well-defined on $\partial \Omega_{t}$. By virtue of this fact, the proof can be shown as below. Assume that $h_{t}(\xi)\in C^{\infty}({\sn})$ for each $t\in (-\varepsilon,\varepsilon)$ with $\varepsilon>0$ is the support function of $\Omega_{t}$ and let $h^{'}_{t}:=\frac{\partial h_{t}(\xi)}{\partial t}$. Then, recall \eqref{hf} and \eqref{hgra}, we get $F(\xi,t)=\sum_{i}(h_{t})_{i}e_{i}+h_{t}\xi$,  $\frac{\partial F(\xi,t)}{\partial t}:=\dot{F}(\xi,t)=\sum_{i}(h^{'}_{t})_{i}e_{i}+h^{'}_{t}\xi$, $F_{i}(\xi,t)=\sum_{j}w_{ij}e_{j}$ with $w_{ij}:=(h_{t})_{ij}+h_{t}\delta_{ij}$, and $F_{ij}(\xi,t)=\sum_{k}w_{ijk}e_{k}-w_{ij}\xi$, where $w_{ijk}$ are the covariant derivatives of $w_{ij}$. Note that, from Lemma \ref{Nonfin}, we know that $\nabla U(F(\xi,t),t)=-|\nabla U(F(\xi,t),t)|\xi$, thus we have
\begin{equation*}\label{U1Fdef}
 \nabla U \cdot F_{i}=0,
\end{equation*}
and
\begin{equation*}\label{U2Fdef}
 ((\nabla^{2}U)F_{j})\cdot F_{i}+\nabla U\cdot F_{ij}=0.
\end{equation*}
It follows that
\begin{equation}\label{U3Fdef}
\sum_{k,l} w_{ik}w_{jl}((\nabla^{2}U)e_{l})\cdot e_{k})+w_{ij}|\nabla U|=0.
\end{equation}
Multiplying both sides of ~\eqref{U3Fdef} by $c_{ip}c_{jq}$ and sum for $i, j$, we get
\begin{equation*}\label{U4Fdef}
 \sum_{k,l}\delta_{kp}\delta_{ql}((\nabla^{2}U e_{l})\cdot e_{k})=-\kappa \sum_{j}\delta_{jp}c_{jq}|\nabla U|.
\end{equation*}
Hence,
\begin{equation*}\label{U7Fdef}
(\nabla^{2}U e_{i})\cdot e_{j}=-\kappa c_{ij}|\nabla U|.
\end{equation*}
This proves $(i)$.

Second, recall that
\begin{equation*}\label{U8Fdef}
|\nabla U(F(\xi,t),t)|=-\nabla U(F(\xi,t),t)\cdot \xi.
\end{equation*}
Taking the covariant derivative of both sides, we have
\begin{align}\label{U9Fdef}
|\nabla U|_{j}&=-\nabla U\cdot e_{j}-(\nabla^{2}U)F_{j}\cdot \xi\notag\\
&=- \sum_{i} w_{ij}((\nabla ^{2}U)e_{i}\cdot \xi).
\end{align}
Multiplying both sides of \eqref{U9Fdef} by $c_{lj}$ and sum for $j$, and combining
\begin{equation*}\label{U10Fdef}
\sum_{j}c_{lj}w_{ij}=\delta_{li}\det((h_{t})_{ij}+h_{t}\delta_{ij}).
\end{equation*}
Then, we get
\begin{equation*}\label{U11Fdef}
\sum_{j}c_{ij}|\nabla U|_{j}=-\det((h_{t})_{ij}+h_{t}\delta_{ij})(\nabla^{2}U) e_{i}\cdot \xi.
\end{equation*}
Hence
\begin{equation*}\label{U12Fdef}
(\nabla^{2}U) e_{i}\cdot \xi=-\kappa \sum_{j}c_{ij}|\nabla U|_{j}.
\end{equation*}
This proves $(iii)$.

Last, using the $q$-Laplace equation for $U$, we get
\[
(q-2)((\nabla^{2}U)\xi \cdot \xi)+\Delta U=0.
\]
Employing $(i)$, we obtain
\begin{align*}\label{DIV}
(q-1)(\nabla^{2}U)\xi\cdot \xi&=(\nabla^{2} U)\xi\cdot \xi-\Delta U\notag\\
&=-\sum_{i}(\nabla^{2}U)e_{i}\cdot e_{i}=\kappa \sum_{i}c_{ii}|\nabla U|.
\end{align*}
Hence, the proof is completed.
\end{proof}
We have the following result along the same line of proof as in \cite[Lemma 3.1]{C15}.
\begin{lem}\label{ar}
Suppose $1<q<n$. Let $\Omega_{t}\in \koo$ be of class $C^{\infty}_{+}$ in ${\rnnn}$ with $|t|< \varepsilon$ for $\varepsilon>0$, and $U(\cdot,t)$ be the solution of \eqref{capequ*} in $\rnnn \backslash \bar{\Omega}_{t}$. Then for each fixed $X\in  {\rnnn} \backslash \bar{\Omega}_{t}$, the function $t\rightarrow U(X,t)$ is differentiable with respect to $t$. Let $\dot{U}(X,t)=\frac{\partial U}{\partial t}(X,t)$. The function $\dot{U}(\cdot,t): \rnnn \backslash \bar{\Omega}_{t}\rightarrow \ry$ can be extended to $\partial \Omega_{t}$ so that $\dot{U}(\cdot,t)\in C^{2}(\rnnn \backslash \Omega_{t})$. Moreover,
\begin{equation*}
\label{Ut}
\left\{
\begin{array}{lr}
\nabla\cdot ((q-2)|\nabla U|^{q-4}(\nabla U \cdot  \nabla \dot{U})\nabla U+|\nabla U|^{q-2}\nabla \dot{U})=0, & X\in {\rnnn} \backslash \bar{\Omega}_{t}, \\
\dot{U}(X,t)=|\nabla U(X,t)|h^{'}_{t},   & X\in  \partial \Omega_{t},
\end{array}\right.
\end{equation*}
where $h_{t}$ is the support function of $\Omega_{t}$, and $h^{'}_{t}$ denotes the derivative of $h_{t}$ with respect to $t$.
Furthermore, there exists $c=c(n,q)$ such that
\begin{equation*}\label{}
0\leq \dot{U}\leq c|X|^{\frac{q-n}{q-1}}\ as\ |X|\rightarrow \infty,
\end{equation*}
\begin{equation*}\label{qq}
0\leq |\nabla \dot{U}|\leq c|X|^{\frac{1-n}{q-1}}\ as\ |X|\rightarrow \infty.
\end{equation*}
In addition,
\begin{equation*}\label{}
\nabla \dot{U}(X):=\lim_{Y\rightarrow X, Y\in \Gamma (X)}\nabla \dot{U}(Y) \ exists  \ for \ every \ X\in \partial \Omega_{t},
\end{equation*}
and
\[
\int_{\partial \Omega_{t}}|\nabla U|^{q-1}|\nabla \dot{U}|d \mathcal{H}^{n-1}< \infty.
\]
\end{lem}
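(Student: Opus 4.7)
The approach is to derive the PDE and boundary condition for $\dot U$ formally by differentiating the defining relations through in $t$, and then to justify this formal differentiation via the regularity theory of the $q$-Laplacian combined with a fixed-domain reformulation.

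First I would straighten the family of exterior domains. Choose a smooth family of diffeomorphisms $\Phi_{t}:\R^{n}\setminus\bar\Omega_{0}\to\R^{n}\setminus\bar\Omega_{t}$ coinciding with the identity outside a large ball, for instance by writing $\Phi_{t}(Y)=Y+\psi(Y)X(Y,t)$ where $\psi$ is a cutoff and $X(F(\xi,0),t)=F(\xi,t)-F(\xi,0)$ on the boundary; smoothness in $t$ follows from $h_{t}\in C^{\infty}(\sn)$ depending smoothly on $t$. Setting $V(Y,t):=U(\Phi_{t}(Y),t)$, $V$ solves a quasilinear elliptic equation on the \emph{fixed} exterior domain $\R^{n}\setminus\bar\Omega_{0}$ whose coefficients depend smoothly on $(Y,t)$ and whose Dirichlet boundary value is the constant $1$. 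Since $\Omega_{0}$ is $C^{\infty}_{+}$ and $|\nabla U(\cdot,0)|$ is bounded away from zero in a neighbourhood of $\partial\Omega_{0}$ (elementary barrier argument, cf.\ the opening remark in the proof of Lemma~\ref{UPO}), the quasilinear operator is uniformly elliptic near $\partial\Omega_{0}$ and the implicit function theorem, together with Schauder estimates (\cite{GT01}), yields $V\in C^{1}((-\varepsilon,\varepsilon);C^{2,\alpha}_{\mathrm{loc}}(\overline{\R^{n}\setminus\Omega_{0}}))$. Transferring back to the moving domain gives the claimed existence of $\dot U$ and its $C^{2}$ regularity up to $\partial\Omega_{t}$.

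Next I would derive the linearized equation. The identity $\nabla\cdot(|\nabla U|^{q-2}\nabla U)=0$ is differentiable in $t$ on any fixed compactly contained open subset of $\R^{n}\setminus\bar\Omega_{t}$, and differentiating produces
\[
\nabla\cdot\Bigl((q-2)|\nabla U|^{q-4}(\nabla U\cdot\nabla\dot U)\,\nabla U+|\nabla U|^{q-2}\nabla\dot U\Bigr)=0,
\]
which is the required equation. For the boundary condition, differentiate $U(F(\xi,t),t)=1$ in $t$ to obtain $\nabla U\cdot\dot F+\dot U=0$; using $\nabla U=-|\nabla U|\xi$ from Lemma~\ref{Nonfin} together with $\dot F(\xi,t)=\sum_{i}(h'_{t})_{i}e_{i}+h'_{t}\xi$ (formula \eqref{hgra} differentiated in $t$) and $\xi\cdot e_{i}=0$, one gets $\dot U=|\nabla U|h'_{t}$ on $\partial\Omega_{t}$.

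The third step is the decay at infinity. Since $U(X,t)\sim c_{q}|X|^{-(n-q)/(q-1)}$ and $|\nabla U(X,t)|\sim c'_{q}|X|^{-(n-1)/(q-1)}$ as $|X|\to\infty$, both behaviours being smooth in $t$, the comparison principle applied to the linearized equation with barriers built from derivatives of the $q$-capacitary fundamental solution yields $0\le\dot U\le c|X|^{(q-n)/(q-1)}$ and $|\nabla\dot U|\le c|X|^{(1-n)/(q-1)}$. The non-negativity of $\dot U$ follows from the monotonicity of the $q$-capacitary potential in the domain, which in turn follows from $h'_{t}\ge 0$ on $\partial\Omega_{t}$ in the appropriate one-sided comparison (and in general by decomposing $h'_{t}$ into positive and negative parts).

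Finally, for the non-tangential limits of $\nabla\dot U$ and the integrability of $|\nabla U|^{q-1}|\nabla\dot U|$ on $\partial\Omega_{t}$, I would invoke the Lewis--Nystr\"om boundary regularity theory \cite{LN07,LN08} applied to the linearized $q$-Laplace equation, whose coefficient matrix $A^{ij}=|\nabla U|^{q-2}\delta^{ij}+(q-2)|\nabla U|^{q-4}\pd_{i}U\pd_{j}U$ is uniformly elliptic on a one-sided neighbourhood of $\partial\Omega_{t}$ (because $|\nabla U|$ is bounded above and below there, by the $C^{\infty}_{+}$ hypothesis). The expected \textbf{main obstacle} is precisely this last step: verifying that the linearized operator, which has potentially degenerate/singular coefficients inherited from $|\nabla U|$, falls within the scope of the Lewis--Nystr\"om non-tangential convergence and boundary Harnack theory, and that the decay estimates from the previous paragraph suffice to control the boundary integral globally. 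Once these are in place, the stated $L^{1}$ bound on $\partial\Omega_{t}$ follows from H\"older applied to $|\nabla U|\in L^{q}(\partial\Omega_{t},\mathcal H^{n-1})$ and the non-tangential maximal function bound on $\nabla\dot U$.
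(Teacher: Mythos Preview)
The paper does not actually prove this lemma; it only states that the result follows ``along the same line of proof as in \cite[Lemma 3.1]{C15}''. Your plan is therefore more detailed than anything the paper supplies. For a fair comparison one can look at the torsion analogue (Lemma~\ref{arv}), whose proof the paper does sketch: there the argument proceeds not by straightening the domain and invoking the implicit function theorem, but by working directly with the difference quotients $(U(\cdot,t+s)-U(\cdot,t))/s$, obtaining uniform $C^{2,\alpha}$ bounds via comparison and Schauder estimates, and extracting $\dot U$ by Arzel\`a--Ascoli along a subsequence. Both routes are standard; your diffeomorphism/IFT approach delivers genuine differentiability in $t$ in one stroke, while the difference-quotient approach in \cite{C15,J91} avoids tracking how the quasilinear operator transforms under $\Phi_{t}$ but initially yields only subsequential convergence.

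Two remarks on your plan. First, your justification of the lower bound $0\le\dot U$ relies on $h'_{t}\ge 0$, which is not assumed; the suggested fix of decomposing $h'_{t}$ into positive and negative parts does not recover a sign on $\dot U$ itself. This is almost certainly a cosmetic imprecision in the lemma's statement (the natural conclusion is $|\dot U|\le c|X|^{(q-n)/(q-1)}$) rather than a gap in your argument. Second, the step you flag as the main obstacle---non-tangential limits of $\nabla\dot U$ via Lewis--Nystr\"om---is in fact easier than you fear under the $C^{\infty}_{+}$ hypothesis: since $|\nabla U|$ is bounded above and below near $\partial\Omega_{t}$ by the barrier argument you already cite, the coefficient matrix $|\nabla U|^{q-2}\delta^{ij}+(q-2)|\nabla U|^{q-4}\partial_{i}U\partial_{j}U$ is uniformly elliptic with $C^{\infty}$ coefficients in a one-sided neighbourhood of the smooth boundary, so classical Schauder theory already gives $\dot U\in C^{2,\alpha}$ up to $\partial\Omega_{t}$ and the non-tangential limits and the boundary integrability of $|\nabla U|^{q-1}|\nabla\dot U|$ follow immediately.
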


Inspired by \cite[Lemma 3.2]{C15}, the following lemma  can also be obtained.
\begin{lem}\label{arb}
Suppose $1<q<n$. Let $\Omega_{t}\in \koo$ be of class $C^{\infty}_{+}$ in ${\rnnn}$ with $|t|< \varepsilon$ for $\varepsilon>0$, and $U(\cdot,t)$ be the solution of \eqref{capequ*} in $\rnnn \backslash \bar{\Omega}_{t}$. Let $\Omega_{t,i}\in \koo$ be of class $C^{\infty}_{+}$ in ${\rnnn}$ for $i=1,2$, and $h_{t,i}$ be the support function of $\Omega_{t,i}$. Let $U_{i}(\cdot,t)$ be the solution of \eqref{capequ*} in $\rnnn \backslash \bar{\Omega}_{t,i}$.  Let $\dot{U}_{i}(X,t)=\frac{\partial U_{i}}{\partial t}(X,t)$ for all $X\in   {\rnnn} \backslash \Omega_{t}$. Then,
 \begin{equation}
\begin{split}\label{L1selfad}
&\int_{\partial \Omega_{t}}h^{'}_{t,1}(\nu_{\Omega_{t}}(X))|\nabla U(X,t)|^{q-1}(\nu_{\Omega_{t}}(X)\cdot \nabla \dot{U}_{2}(X,t)){d}{\mathcal{H}^{n-1}(X)}\\
&=\int_{\partial \Omega_{t}}h^{'}_{t,2}(\nu_{\Omega_{t}}(X))|\nabla U(X,t)|^{q-1}(\nu_{\Omega_{t}}(X)\cdot \nabla \dot{U}_{1}(X,t)){d}{\mathcal{H}^{n-1}(X)}.
\end{split}
\end{equation}
\end{lem}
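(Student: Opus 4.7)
My plan is to derive \eqref{L1selfad} from a Green-type identity for the linearized $q$-Laplace equation. By Lemma~\ref{ar}, each $\dot U_i(\cdot,t)$ solves, in $\rnnn\setminus\bar\Omega_t$,
\begin{equation*}
\nabla\cdot\bigl(\mathcal A(\nabla U)\,\nabla\dot U_i\bigr)=0,\qquad \mathcal A(p)v:=(q-2)|p|^{q-4}(p\cdot v)\,p+|p|^{q-2}v,
\end{equation*}
with $\mathcal A(p)$ a symmetric matrix at each point where $p\neq 0$. The key consequence of this symmetry is that the vector field $\dot U_1\,\mathcal A(\nabla U)\nabla\dot U_2-\dot U_2\,\mathcal A(\nabla U)\nabla\dot U_1$ is divergence-free in $\rnnn\setminus\bar\Omega_t$, because the two cross terms $\nabla\dot U_1\cdot\mathcal A\nabla\dot U_2$ and $\nabla\dot U_2\cdot\mathcal A\nabla\dot U_1$ exactly cancel.

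I would then apply the divergence theorem on $B_R\setminus\bar\Omega_t$ and let $R\to\infty$. The decay estimates $\dot U_i=O(|X|^{(q-n)/(q-1)})$ and $|\nabla\dot U_i|=O(|X|^{(1-n)/(q-1)})$ from Lemma~\ref{ar}, together with the standard decay $|\nabla U|=O(|X|^{(1-n)/(q-1)})$, force the flux through $\partial B_R$ to vanish; thus only the flux through $\partial\Omega_t$ remains. Because the outward normal to the exterior region is $-\nu_{\Omega_t}$ on $\partial\Omega_t$, I obtain
\begin{equation*}
\int_{\partial\Omega_t}\bigl(\dot U_1\,\mathcal A(\nabla U)\nabla\dot U_2-\dot U_2\,\mathcal A(\nabla U)\nabla\dot U_1\bigr)\cdot\nu_{\Omega_t}\,d\mathcal H^{n-1}=0.
\end{equation*}

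To close the argument I would evaluate this flux using the pointwise boundary identities. By Lemma~\ref{Nonfin}, $\nabla U=-|\nabla U|\nu_{\Omega_t}$ almost everywhere on $\partial\Omega_t$; substituting into the definition of $\mathcal A$ yields, after a one-line computation,
\begin{equation*}
\mathcal A(\nabla U)\nabla\dot U_i\cdot\nu_{\Omega_t}=(q-1)|\nabla U|^{q-2}(\nu_{\Omega_t}\cdot\nabla\dot U_i).
\end{equation*}
Combining this with the Dirichlet data $\dot U_i=h'_{t,i}(\nu_{\Omega_t})\,|\nabla U|$ on $\partial\Omega_t$, supplied by Lemma~\ref{ar}, transforms the vanishing boundary flux into exactly \eqref{L1selfad}.

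The main obstacle is the analytic justification of the integration by parts. The decay rates at infinity are borderline and must be tracked carefully: on $\partial B_R$ the integrand scales like $R^{(q-n)/(q-1)}$, which tends to zero since $q<n$. The second subtlety is that $\mathcal A(\nabla U)$ degenerates where $|\nabla U|=0$, but since $\Omega_t$ is of class $C^{\infty}_+$ the Hopf lemma keeps $|\nabla U|$ bounded below on $\partial\Omega_t$, while the integrability of the boundary terms is controlled by $|\nabla U|\in L^q(\partial\Omega_t,\mathcal H^{n-1})$ from Lemma~\ref{Nonfin} and the bound $\int_{\partial\Omega_t}|\nabla U|^{q-1}|\nabla\dot U_i|\,d\mathcal H^{n-1}<\infty$ from Lemma~\ref{ar}.
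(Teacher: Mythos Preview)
Your proposal is correct and follows the same Green-type identity argument that the paper invokes by reference to \cite[Lemma 3.2]{C15}: exploit the symmetry of the coefficient matrix $\mathcal A(\nabla U)$ in the linearized $q$-Laplace equation to obtain a divergence-free vector field, integrate over $B_R\setminus\bar\Omega_t$, kill the outer flux with the decay estimates from Lemma~\ref{ar}, and reduce the inner boundary term via $\nabla U=-|\nabla U|\nu_{\Omega_t}$ and the Dirichlet data for $\dot U_i$. One small clarification: in your decay check, it is the total flux through $\partial B_R$ (after multiplying by $R^{n-1}$), not the pointwise integrand, that scales like $R^{(q-n)/(q-1)}\to 0$; your bookkeeping is otherwise accurate.
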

Now, applying Lemmas \ref{ar}, \ref{arb}, we can get the first and second variational formulas of $C_{q}$ under $L_{p}$ sum.

\begin{lem}\label{fsd}
Suppose $p>0$ and $1<q<n$. Let $h\in \mathcal{S}$ be positive, $\phi^{p}\in C^{\infty}(\sn)$ and $\varepsilon>0$ such that $(h^p+t\phi^p)^{1/p} \in \mathcal{S}$ for every $t\in (-\varepsilon,\varepsilon)$. Set $h_{t}=(h^p+t\phi^p)^{1/p}$ and $f_{p,q}(t)=C_{q}(h_{t})$. Then
\begin{equation*}\label{}
f^{'}_{p,q}(0)=\frac{q-1}{p}\int_{\sn}\phi^{p}h^{1-p}\frac{ |\nabla U(F(\xi))|^{q}}{\kappa}d \xi,
\end{equation*}
and
\begin{equation*}
\begin{split}
\label{Up3}
f^{''}_{p,q}(0)&=\frac{(1-p)(q-1)}{p^2}\int_{\sn}h^{1-2p}\phi^{2p}|\nabla U(F(\xi))|^{q}\frac{1}{\kappa}d\xi\\
&\quad -\frac{q(q-1)}{p}\int_{\sn}h^{1-p}\phi^{p} \frac{|\nabla U(F(\xi))|^{q-1}}{\kappa}\nabla \dot{U}(F(\xi))\cdot \xi d \xi\\
&\quad-\frac{1}{p^{2}}\int_{\sn}\sum_{i}|\nabla U(F(\xi))|^{q}c_{ii}h^{2-2p}\phi^{2p}d \xi\\
&\quad +\frac{(q-1)}{p^2}\int_{\sn}\sum_{i,j}h^{1-p}\phi^{p}(c_{ij}|\nabla U(F(\xi))|^{q}(h^{1-p}\phi^p)_{i})_{j}d \xi,
\end{split}
\end{equation*}
where $\dot{U}$ satisfies the following equation
\begin{equation*}
\label{Ut}
\left\{
\begin{array}{lr}
\nabla\cdot ((q-2)|\nabla U|^{q-4}(\nabla U \cdot  \nabla \dot{U})\nabla U+|\nabla U|^{q-2}\nabla \dot{U})=0, & X\in {\rnnn} \backslash \bar{\Omega}, \\
\dot{U}(X)=\frac{1}{p}\phi^{p}(\nu_{\Omega}(X))h^{1-p}(\nu_{\Omega}(X))|\nabla U(X)|,   & X\in  \partial \Omega,\\
\lim_{|X|\rightarrow \infty}\dot{U}(X)=0.
\end{array}\right.
\end{equation*}
\end{lem}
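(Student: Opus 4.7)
The plan is to differentiate the spherical integral representation \eqref{captatget2} of $C_q$ along the path $h_{t}=(h^{p}+t\phi^{p})^{1/p}$, for which $\dot h_{0}=\phi^{p}h^{1-p}/p$. Set $D_{t}=\det(h_{t,ij}+h_{t}\delta_{ij})=1/\kappa_{t}$. The first variation is essentially a specialization of the $L_{p}$-Hadamard formula recalled in the introduction: since the $L_{p}$-infinitesimal direction at $t=0$ is $\phi^{p}$, one obtains
\begin{equation*}
f'_{p,q}(t) = \frac{q-1}{p}\int_{\sn} \phi^{p}\, h_{t}^{1-p}\,|\nabla U_{t}(F_{t})|^{q}\, D_{t}\, d\xi,
\end{equation*}
which at $t=0$ gives the claimed formula. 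Alternatively, this is obtained directly from \eqref{captatget2} by decomposing the chain-rule piece $\nabla U\cdot(\nabla^{2}U\,\dot F_{0}+\nabla\dot U)$ via $\nabla U=-|\nabla U|\xi$ and Lemma \ref{UPO}, eliminating $\nabla\dot U$ through Lemma \ref{arb} (applied with $h_{t,1}=h_{t,2}$), and collapsing the remaining cofactor pieces using the Codazzi identity $\sum_{j}(c_{ij})_{,j}=0$.

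For the second derivative I would differentiate this explicit form of $f'_{p,q}(t)$ again at $t=0$, regarding the integrand as a product of the three factors $a_{t}=\phi^{p}h_{t}^{1-p}$, $b_{t}=|\nabla U_{t}(F_{t})|^{q}$, and $c_{t}=D_{t}$. The $\dot a_{0}$ contribution, $(1-p)\phi^{2p}h^{1-2p}/p$, directly produces the first term of $f''_{p,q}(0)$. For $\dot b_{0}$, the chain rule combined with $\nabla U=-|\nabla U|\xi$ and the decomposition $\dot F_{0}=\nabla_{\sn}\dot h_{0}+\dot h_{0}\,\xi$ gives $\dot b_{0}=-q|\nabla U|^{q-1}(\xi\cdot(\nabla^{2}U)\dot F_{0}+\nabla\dot U\cdot\xi)$. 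Applying Lemma \ref{UPO}(ii)-(iii) then yields
\begin{equation*}
\xi\cdot(\nabla^{2}U)\dot F_{0}=\frac{\dot h_{0}\,\kappa|\nabla U|}{q-1}\sum_{i}c_{ii}-\kappa\sum_{i,j}\dot h_{0,i}\,c_{ij}|\nabla U|_{j},
\end{equation*}
while the boundary datum $\dot U|_{\partial\Omega}=|\nabla U|h^{1-p}\phi^{p}/p$ supplied by Lemma \ref{ar} isolates the $\nabla\dot U\cdot\xi$ piece. Finally, $\dot c_{0}=\sum_{i,j}c_{ij}(\dot h_{0,ij}+\dot h_{0}\delta_{ij})$ by the derivative of the determinant.

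The assembly is a bookkeeping exercise. The two $\sum_{i}c_{ii}$ sub-terms, with coefficients $-q/p^{2}$ (from the first piece of $\dot b_{0}$) and $(q-1)/p^{2}$ (from the $\dot h_{0}\delta_{ij}$ piece of $\dot c_{0}$), combine to $-1/p^{2}$, producing the third term of $f''_{p,q}(0)$. The $\nabla\dot U\cdot\xi$ contribution from $\dot b_{0}$ produces the second term with coefficient $-q(q-1)/p$. The mixed cofactor sub-term from $\dot b_{0}$ together with the $\sum c_{ij}(\phi^{p}h^{1-p})_{ij}$ sub-term from $\dot c_{0}$ are exactly the two summands produced by expanding $\sum_{i,j}(c_{ij}|\nabla U|^{q}(\phi^{p}h^{1-p})_{i})_{j}$ and using $\sum_{j}(c_{ij})_{,j}=0$; multiplying by $\phi^{p}h^{1-p}$ and the overall factor $(q-1)/p^{2}$ yields the fourth term. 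The chief obstacle is the sign bookkeeping in $\dot b_{0}$: the minus signs arising from $\nabla U=-|\nabla U|\xi$, from Lemma \ref{UPO}(iii), and from the $q-1$ denominator in (ii) must propagate cleanly so that the mixed cofactor contribution from $\dot b_{0}$ aligns in sign and in covariant-derivative structure with its partner from $\dot c_{0}$.
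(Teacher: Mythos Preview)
Your proposal is correct and follows essentially the same computation as the paper; the second-variation bookkeeping you describe (the $-q/p^{2}$ and $(q-1)/p^{2}$ cancellation on $\sum_{i}c_{ii}$, and the recombination of the mixed-cofactor and $\dot h_{0,ij}$ pieces into $\sum_{i,j}(c_{ij}|\nabla U|^{q}\dot h_{0,i})_{j}$ via $\sum_{j}c_{ij,j}=0$) matches the paper's derivation term for term.

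The only structural difference is in how the \emph{first} variation is obtained. You invoke the $L_{p}$-Hadamard formula to write $f'_{p,q}(t)=(q-1)\int h'_{t}\,G(h_{t})\,d\xi$ directly. The paper instead re-derives this from \eqref{captatget2}: it differentiates $\frac{q-1}{n-q}\int h_{t}G(h_{t})\,d\xi$, packages $\frac{d}{dt}G(h_{t})$ as an operator $\mathcal{L}^{t}(h'_{t})$, shows $\mathcal{L}^{t}$ is self-adjoint (the $\nabla\dot U$ piece via Lemma~\ref{arb}, the rest trivially), swaps $\int h_{t}\mathcal{L}^{t}(h'_{t})=\int h'_{t}\mathcal{L}^{t}(h_{t})$, and then uses homogeneity to identify $\mathcal{L}^{t}(h_{t})=(n-q-1)G(h_{t})$. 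Your ``alternative'' one-line sketch of this step is slightly off: invoking Lemma~\ref{arb} with $h'_{t,1}=h'_{t,2}$ is a tautology and eliminates nothing; what the paper actually does is apply self-adjointness with the pair $(h_{t},h'_{t})$ and then invoke homogeneity. Since your primary route (citing the Hadamard formula) is valid, this does not affect the overall correctness.
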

\begin{proof}
 In view of Lemma \ref{ar}, we first conclude that $U(F(\xi,t),t)$ is differentiable with respect to $t$. Based on this fact and using \eqref{captatget2}, by a direct calculation, we have
\begin{equation}
\begin{split}
\label{capcalcu}
f^{'}_{p,q}(t)&=\frac{d}{dt}C_{q}(h_{t})\\
&=\frac{q-1}{n-q}\int_{{\sn}}(h^{'}_{t} G(h_{t})+h_{t}\frac{d}{dt}G(h_{t}))d \xi, \ \forall \xi\in {\sn},
\end{split}
\end{equation}
where
\begin{equation}\label{Fdef1*}
G(h_{t})=|\nabla  U(F(\xi,t),t)|^{q}\det((h_{t})_{ij}+h_{t}\delta_{ij})(\xi),
\end{equation}
 with
 \begin{equation*}\label{Fdef2}
 F(\xi)=\nu^{-1}_{\Omega}(\xi)=\nabla h(\xi)=\sum_{i}h_{i}(\xi)e_{i}(\xi)+h(\xi)\xi.
\end{equation*}
By \eqref{Fdef1*}, we have
 \begin{equation}
\begin{split}
\label{Gcal}
\frac{d}{dt}G(h_{t})&=\frac{d}{dt}[|\nabla  U(F(\xi,t),t)|^{q}\det((h_{t})_{ij}+h_{t}\delta_{ij})]\\
&=|\nabla U(F(\xi,t),t)|^{q}(c^{t}_{ij}((h^{'}_{t})_{ij}+(h^{'}_{t})\delta_{ij}))+\det((h_{t})_{ij}+h_{t}\delta_{ij})\frac{d}{dt}(|\nabla U(F(\xi,t),t)|^{q})\\
&=|\nabla U(F(\xi,t),t)|^{q}(c^{t}_{ij}((h^{'}_{t})_{ij}+(h^{'}_{t})\delta_{ij}))\\
&\quad+q|\nabla U(F(\xi,t),t)|^{q-1}\det((h_{t})_{ij}+h_{t}\delta_{ij})\frac{d}{dt}(|\nabla U(F(\xi,t),t)|),
\end{split}
\end{equation}
where $c^{t}_{ij}:=c_{ij}(\cdot,t)$ is the cofactor matrix of $((h_{t})_{ij}+h_{t}\delta_{ij})$.

 Since $|\nabla U(F(\xi,t),t)|=-\nabla U(F(\xi,t),t)\cdot \xi$, $F(\xi,t)=\sum_{i}(h_{t})_{i}e_{i}+h_{t}\xi$, and $\dot{F}(\xi,t)=\sum_{i}(h^{'}_{t})_{i}e_{i}+h^{'}_{t}\xi$, then we get
 \begin{equation}
\begin{split}
\label{GraU}
&\frac{d}{dt}(|\nabla U(F(\xi,t),t)|)\\
&=-\nabla^{2}U(F(\xi,t),t)\xi\cdot \dot{F}(\xi,t)-\nabla \dot{U}(F(\xi,t),t)\cdot \xi\\
&=-\nabla^{2}U(F(\xi,t),t))\xi\cdot (\sum_{i}(h^{'}_{t})_{i}e_{i}+h^{'}_{t}\xi)-\nabla \dot{U}(F(\xi,t),t)\cdot \xi.
\end{split}
\end{equation}
Applying Lemma \ref{UPO} into \eqref{GraU}, we have
 \begin{equation}
\begin{split}
\label{GraU2}
&\frac{d}{dt}(|\nabla U(F(\xi,t),t)|)\\
&=-\nabla^{2}U(F(\xi,t),t)\xi\cdot (\sum_{i}(h^{'}_{t})_{i}e_{i}+h^{'}_{t}\xi)-\nabla \dot{U}(F(\xi,t),t)\cdot \xi\\
&=\kappa \sum_{i,j}c^{t}_{ij}|\nabla U|_{j}(h^{'}_{t})_{i}-(q-1)^{-1}\kappa |\nabla U|\sum_{i}c^{t}_{ii}h^{'}_{t}-\nabla \dot{U}(F(\xi,t),t)\cdot \xi.
\end{split}
\end{equation}
Substituting \eqref{GraU2} into \eqref{Gcal}, we have
 \begin{equation}
\begin{split}
\label{Gcal2}
&\frac{d}{dt}G(h_{t})\\
&=|\nabla U(F(\xi,t),t)|^{q}\sum_{i,j}c^{t}_{ij}((h^{'}_{t})_{ij}+(h^{'}_{t})\delta_{ij})\\
&\quad+q|\nabla U(F(\xi,t),t)|^{q-1}\det((h_{t})_{ij}+h_{t}\delta_{ij})\left[\kappa \sum_{i,j}c^{t}_{ij}|\nabla U|_{j}(h^{'}_{t})_{i}\right.\\
&\left.\quad \quad \quad \quad \quad-(q-1)^{-1}\kappa |\nabla U(F(\xi,t),t)|\sum_{i}c^{t}_{ii}(h^{'}_{t})-\nabla \dot{U}(F(\xi,t),t)\cdot \xi\right]\\
&=-q\frac{|\nabla U(F(\xi,t),t)|^{q-1}}{\kappa}\nabla \dot{U}(F(\xi,t),t)\cdot \xi-\frac{q}{q-1}|\nabla U(F(\xi,t),t)|^{q}\sum_{i}c^{t}_{ii}(h^{'}_{t})\\
&\quad +q\sum_{i,j}c^{t}_{ij}|\nabla U(F(\xi,t),t)|^{q-1}|\nabla U(F(\xi,t),t)|_{j}(h^{'}_{t})_{i}+|\nabla U(F(\xi,t),t)|^{q}\sum_{i,j}c^{t}_{ij}((h^{'}_{t})_{ij}+(h^{'}_{t})\delta_{ij}).
\end{split}
\end{equation}
For simplifying \eqref{Gcal2}, we apply the conclusion that  $\Sigma_{j}c_{ijj}=0$ proved by \cite{CY76}, where $c_{ijl}$ is the covariant derivative tensor of $c_{ij}$.

We calculate
 \begin{equation}
\begin{split}
\label{Gcal3}
&\frac{d}{dt}G(h_{t})\\
&=-q\frac{|\nabla U(F(\xi,t),t)|^{q-1}}{\kappa}\nabla \dot{U}(F(\xi,t),t)\cdot \xi-(q-1)^{-1}|\nabla U(F(\xi,t),t)|^{q}\sum_{i}c^{t}_{ii}(h^{'}_{t})\\
&\quad +\sum_{i,j}(c^{t}_{ij}|\nabla U(F(\xi,t),t)|^{q}(h^{'}_{t})_{i})_{j}\\
&:=\mathcal{L}^{t}(h^{'}_{t}).
\end{split}
\end{equation}
Now we show that the operator $\mathcal{L}^{t}$ is self-adjoint on $L^{2}({\sn},d\xi)$ equipped with the standard scalar product. To see this, define for $j=1,2,3$,
 \begin{equation*}\label{L1}
\mathcal{L}^{t}_{1}(h^{'}_{t})=-q\frac{|\nabla U(F(\xi,t),t)|^{q-1}}{\kappa}\nabla \dot{U}(F(\xi,t),t)\cdot \xi,
\end{equation*}
\begin{equation*}\label{L2}
\mathcal{L}^{t}_{2}(h^{'}_{t})=-(q-1)^{-1}|\nabla U(F(\xi,t),t)|^{q}\sum_{i}c^{t}_{ii}(h^{'}_{t}),
\end{equation*}
and
\begin{equation*}\label{L3}
\mathcal{L}^{t}_{3}(h^{'}_{t})=\sum_{i,j}(c^{t}_{ij}|\nabla U(F(\xi,t),t)|^{q}(h^{'}_{t})_{i})_{j}.
\end{equation*}
We verify that  $\mathcal{L}^{t}_{1}, \mathcal{L}^{t}_{2}$ and $\mathcal{L}^{t}_{3}$ are self-adjoint on $L^{2}({\sn},d\xi)$, i.e., for $h^{'}_{t,1}(\xi), h^{'}_{t,2}(\xi)\in \mathcal{S}$:
\begin{equation}\label{selfad}
\int_{{\sn}}h^{'}_{t,1}\mathcal{L}^{t}_{j}(h^{'}_{t,2}){d}{\xi}=\int_{{\sn}}h^{'}_{t,2}\mathcal{L}^{t}_{j}(h^{'}_{t,1}){d}{\xi}, \quad j=1,2,3.
\end{equation}
Clearly, $\mathcal{L}^{t}_{2}$ is self-adjoint, $\mathcal{L}^{t}_{3}$ is self-adjoint by applying integration by parts, and Lemmas \ref{ar} and \ref{arb} indicate that $\mathcal{L}^{t}_{1}$ is self-adjoint. We conclude that, $\mathcal{L}^{t}$ is self-adjoint on $L^{2}({\sn},d\xi)$. In view of \eqref{Fdef1*}, $G(h_{t})$ is positively homogeneous of order $(n-q-1)$, for $s\geq 0$, set $G(h_{t}+sh_{t})=G((1+s)h_{t})=(1+s)^{n-q-1}G(h_{t})$, then we have
\begin{equation}\label{GVhom}
\frac{d}{ds}G((1+s)h_{t})\Big|_{s=0^{+}}=(n-q-1)G(h_{t})=\mathcal{L}^{t}(h_{t}).
\end{equation}
Applying \eqref{GVhom} into \eqref{capcalcu}, we obtain
\begin{equation}
\begin{split}
\label{Fcapcalcu}
\frac{d}{dt}f_{p,q}(t)&=\frac{q-1}{n-q}\int_{{\sn}}(h^{'}_{t} G(h_{t})+h_{t}\mathcal{L}^{t}(h^{'}_{t})){d}{\xi}\\
&=\frac{q-1}{n-q}\int_{{\sn}}(h^{'}_{t} G(h_{t})+h^{'}_{t}\mathcal{L}^{t}(h_{t})){d}{\xi}\\
&=\frac{q-1}{n-q}\int_{{\sn}}(h^{'}_{t} G(h_{t})+(n-q-1)(h^{'}_{t}) G(h_{t})){d}{\xi}\\
&=(q-1)\int_{{\sn}}h^{'}_{t} G(h_{t}){d}{\xi},
\end{split}
\end{equation}
namely,
\begin{align}\label{qwe}
f^{'}_{p,q}(t)&=(q-1)\int_{{\sn}}(h^{'}_{t})(\xi)|\nabla  U(F(\xi,t),t)|^{q}\det((h_{t})_{ij}(\xi)+h_{t}(\xi)\delta_{ij}){d}{\xi}.
\end{align}
Using \eqref{Fcapcalcu}, we get
\begin{equation}
\begin{split}
\label{qwe2}
f^{''}_{p,q}(t)=(q-1)\int_{\sn}(h^{''}_{t})G(h_{t})d\xi+(q-1)\int_{\sn}(h^{'}_{t})\mathcal{L}^{t} (h^{'}_{t}) d\xi.
\end{split}
\end{equation}
Then
\begin{align*}\label{Fcapcalcu2}
f^{'}_{p,q}(0)&=\frac{q-1}{p}\int_{{\sn}}h^{1-p}\phi^{p}|\nabla  U(F(\xi)|^{q}\det(h_{ij}(\xi)+h(\xi)\delta_{ij}){d}{\xi},
\end{align*}
and
\begin{equation*}
\begin{split}
\label{Up3}
f^{''}_{p,q}(0)&=\frac{(1-p)(q-1)}{p^2}\int_{\sn}h^{1-2p}\phi^{2p}|\nabla U(F(\xi))|^{q}\frac{1}{\kappa}d\xi\\
&\quad -\frac{q(q-1)}{p}\int_{\sn}h^{1-p}\phi^{p} \frac{|\nabla U(F(\xi))|^{q-1}}{\kappa}\nabla \dot{U}(F(\xi))\cdot \xi d \xi\\
&\quad - \frac{1}{p^{2}}\int_{\sn}\sum_{i}|\nabla U(F(\xi))|^{q}c_{ii}h^{2-2p}\phi^{2p}d \xi\\
&\quad +\frac{(q-1)}{p^2}\int_{\sn}\sum_{i,j}h^{1-p}\phi^{p}(c_{ij}|\nabla U(F(\xi))|^{q}(h^{1-p}\phi^p)_{i})_{j}d \xi.
\end{split}
\end{equation*}
Hence, Lemma \ref{fsd} is completed.
\end{proof}

\subsection{Torsion}

We calculate the first and second variational formulas of torsion under $L_p$ sum.
We first list the following results proved similarly in Lemma \ref{UPO}.

\begin{lem}\label{UPO8v} Let $\Omega_{t}\in \koo$ be of class $C^{\infty}_{+}$ in ${\rnnn}$ with $|t|< \varepsilon$ for $\varepsilon>0$, and $U(\cdot,t)$ be the solution of \eqref{Tor2} in $\Omega_{t}$, then

(i) $(\nabla^{2} U(F(\xi,t),t)e_{i})\cdot e_{j}=-\kappa |\nabla U(F(\xi,t),t)|c_{ij}(\xi,t)$;

(ii) $(\nabla^{2} U(F(\xi,t),t)\xi)\cdot \xi=\kappa |\nabla U(F(\xi,t),t)|\sum_{i} c_{ii}(\xi,t)-1$;

(iii) $(\nabla^{2} U(F(\xi,t),t)e_{i})\cdot \xi=-\kappa\sum_{j} |\nabla U(F(\xi,t),t)|_{j}c_{ij}(\xi,t)$.
\end{lem}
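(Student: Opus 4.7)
The plan is to follow the proof blueprint of Lemma~\ref{UPO} essentially verbatim, with the sole substantive modification occurring in the derivation of (ii), which is the only part that genuinely uses the underlying PDE. Parts (i) and (iii) are \emph{equation-agnostic}: they rely only on the fact that $U(\cdot,t)$ is constant on $\partial\Omega_t$ together with enough boundary regularity to take second tangential derivatives. Since $\Omega_t$ is of class $C^\infty_+$ and the right-hand side of \eqref{Tor2} is smooth, classical Schauder estimates ensure $U(\cdot,t)\in C^{\infty}(\overline{\Omega_t})$, so second derivatives of $U$ on $\partial\Omega_t$ are pointwise defined.

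First I would set up the same notation as in the proof of Lemma~\ref{UPO}: write $w_{ij} := (h_t)_{ij}+h_t\delta_{ij}$, $F_i(\xi,t)=\sum_j w_{ij}e_j$, and note that, because $U>0$ in $\Omega_t$ and $U=0$ on $\partial\Omega_t$ (maximum principle applied to $\Delta U=-1$), one has
\[
\nabla U(F(\xi,t),t)=-|\nabla U(F(\xi,t),t)|\,\xi.
\]
Differentiating the identity $U(F(\xi,t),t)\equiv 0$ twice in tangential directions on $\sn$ yields, exactly as in Lemma~\ref{UPO}, the relation
\[
\sum_{k,l}w_{ik}w_{jl}\bigl((\nabla^2 U)e_l\cdot e_k\bigr)+w_{ij}|\nabla U|=0,
\]
and inverting $w$ through the cofactor matrix gives (i). Part (iii) follows analogously: differentiating $|\nabla U|=-\nabla U\cdot\xi$ covariantly produces $|\nabla U|_j=-\sum_i w_{ij}\bigl((\nabla^2 U)e_i\cdot\xi\bigr)$, and multiplying by $c_{lj}$ and summing in $j$ delivers (iii). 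Neither argument touches the form of the equation that $U$ satisfies, so they transfer without change from the capacity setting.

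For (ii), the new input is the torsion PDE $\Delta U=-1$ instead of the $q$-Laplace equation. Decompose the Laplacian along the orthonormal frame $\{e_1,\dots,e_{n-1},\xi\}$ at $F(\xi,t)\in\partial\Omega_t$ to obtain
\[
-1=\Delta U=\sum_{i=1}^{n-1}(\nabla^2 U)e_i\cdot e_i+(\nabla^2 U)\xi\cdot\xi.
\]
Substituting part (i) into the tangential sum converts $\sum_i (\nabla^2 U)e_i\cdot e_i$ into $-\kappa|\nabla U|\sum_i c_{ii}$, and solving for the normal component gives $(\nabla^2 U)\xi\cdot\xi=\kappa|\nabla U|\sum_i c_{ii}-1$, which is precisely (ii). Note the structural difference from the capacity case: the factor $(q-1)^{-1}$ there arose from the nonlinear expansion of $\Delta_q U=0$, whereas here the linear operator produces no such coefficient, while the inhomogeneous term $-1$ contributes the constant offset that distinguishes (ii) from its capacitary counterpart.

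There is no serious obstacle. The principal care needed is sign bookkeeping (ensuring the minus sign from $\nabla U = -|\nabla U|\xi$ is propagated consistently), and verifying that $C^2$-regularity up to the boundary in fact holds under the stated smoothness hypothesis on $\Omega_t$; both issues are standard and are handled exactly as in the proof of Lemma~\ref{UPO}.
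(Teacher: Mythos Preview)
Your proposal is correct and follows exactly the approach the paper indicates: the paper does not give a separate proof of this lemma, but simply states that the results are ``proved similarly in Lemma~\ref{UPO},'' and your adaptation---carrying over (i) and (iii) verbatim and replacing the $q$-Laplace identity by $\Delta U=-1$ in the derivation of (ii)---is precisely that.
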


Building upon \eqref{Tor2}, we can get the following result.
\begin{lem}\label{arv}
 Let $\Omega_{t}\in \koo$ be of class $C^{\infty}_{+}$ in ${\rnnn}$ with $|t|< \varepsilon$ for $\varepsilon>0$, and $U(\cdot,t)$ be the solution of \eqref{Tor2} in $\Omega_{t}$. Then, for each fixed $X\in  \Omega_{t}$, the function $t\rightarrow U(X,t)$ is differentiable with respect to $t$. Let $\dot{U}(X,t)=\frac{\partial U}{\partial t}(X,t)$, then
\begin{equation*}
\label{Ut}
\left\{
\begin{array}{lr}
\Delta \dot{U}=0, & X\in \Omega_{t}, \\
\dot{U}(X,t)=|\nabla U(X,t)|h^{'}_{t},   & X\in  \partial \Omega_{t},
\end{array}\right.
\end{equation*}
where $h_{t}$ is the support function of $\Omega_{t}$, and $h^{'}_{t}$ denotes the derivative of $h_{t}$ with respect to $t$.
\end{lem}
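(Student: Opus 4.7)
The plan is to mirror the proof of Lemma \ref{ar}, with the substantial simplification that the governing PDE is now the linear Poisson equation $\Delta U = -1$ rather than the degenerate quasilinear $q$-Laplace equation. First I would establish that for each $t \in (-\varepsilon, \varepsilon)$ the boundary value problem \eqref{Tor2} on the $C^{\infty}_+$ domain $\Omega_t$ admits a unique solution $U(\cdot,t) \in C^{\infty}(\overline{\Omega}_t)$ by standard elliptic regularity for Poisson's equation on smooth domains (together with the maximum principle, which gives $U > 0$ in $\Omega_t$).

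To verify differentiability of $t \mapsto U(X,t)$, I would pull back to a fixed reference domain. Choose a smooth family of diffeomorphisms $\Phi_t : \overline{\Omega}_0 \to \overline{\Omega}_t$ built from the Gauss parametrization $F(\xi,t) = \nabla h_t(\xi)$ extended smoothly inward, and set $\widetilde{U}(Y,t) = U(\Phi_t(Y),t)$ on $\overline{\Omega}_0$. The moving-domain problem becomes a $t$-parametrized family of linear elliptic problems on $\overline{\Omega}_0$ with smoothly varying coefficients and zero Dirichlet data; standard linear perturbation theory then shows $\widetilde{U}$, and hence $U$, is smooth in $t$, and $\dot{U}(\cdot,t)$ extends smoothly up to $\partial \Omega_t$.

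Granted the regularity, the identification of $\dot{U}$ is a direct computation. Differentiating $\Delta U(X,t) = -1$ in $t$ at a fixed interior point $X \in \Omega_t$ kills the right-hand side and yields $\Delta \dot{U} = 0$ in $\Omega_t$. For the boundary value, I differentiate the identity $U(F(\xi,t),t) = 0$ valid for every $(\xi,t) \in \sn \times (-\varepsilon,\varepsilon)$. Using \eqref{hgra} to write $\dot{F}(\xi,t) = \sum_i (h'_t)_i e_i + h'_t \xi$, the chain rule gives
\begin{equation*}
\nabla U(F(\xi,t),t) \cdot \dot{F}(\xi,t) + \dot{U}(F(\xi,t),t) = 0.
\end{equation*}
Because $U > 0$ in $\Omega_t$ and $U = 0$ on $\partial \Omega_t$, the Hopf lemma forces $\nabla U$ to be antiparallel to the outer unit normal: $\nabla U(F(\xi,t),t) = -|\nabla U(F(\xi,t),t)|\,\xi$. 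Combined with $\xi \cdot e_i = 0$, this gives $\nabla U \cdot \dot{F} = -|\nabla U|\, h'_t$, so $\dot{U}(F(\xi,t),t) = |\nabla U(F(\xi,t),t)|\, h'_t$, which is exactly the asserted boundary condition.

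The main obstacle will be justifying the $t$-differentiability of $U(X,t)$ together with the continuous (in fact smooth) extension of $\dot{U}$ up to $\partial \Omega_t$; this is where the smoothness of $\Omega_t$ and of $h_t$ in $t$ is used, via the pullback trick and classical Schauder theory. Once this regularity is in hand, the remainder of the argument is purely formal and avoids the delicate boundary regularity issues that were needed for the $q$-Laplace operator in Lemma \ref{ar}, since here the equation satisfied by $\dot{U}$ is already the ordinary Laplacian.
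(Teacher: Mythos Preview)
Your proposal is correct. The boundary computation---differentiating $U(F(\xi,t),t)=0$ and using $\nabla U = -|\nabla U|\,\xi$ on $\partial\Omega_t$---is exactly what the paper does, line for line.

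Where you diverge is in the justification of the $t$-differentiability and the $C^2$ extension of $\dot U$ to the closure. The paper does not pull back to a fixed domain; instead it works directly with the difference quotients $s^{-1}(U(\cdot,t+s)-U(\cdot,t))$, invokes the maximum and comparison principles together with Schauder estimates (citing \cite[Lemma~2.3]{CF10}) to obtain uniform $C^{2,\alpha}$ bounds, and then applies Arzel\`a--Ascoli plus a diagonalization to extract a limit $\dot U$ satisfying $\Delta\dot U=0$ on compact subsets. Your pullback-plus-linear-perturbation argument is a genuinely different route: it trades the explicit compactness machinery for the smooth parameter dependence of solutions to uniformly elliptic problems on a fixed domain, and it delivers full smoothness in $t$ (not just existence of a subsequential limit) in one stroke. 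The paper's approach is more hands-on and closer in spirit to the $q$-Laplace case of Lemma~\ref{ar}; yours is cleaner here precisely because the equation is linear and the domain is $C^\infty_+$, so the pullback produces a smooth family of linear elliptic operators to which standard theory applies directly.
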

\begin{proof}
 Following similar lines as \cite[Lemma 3.1]{C15} or \cite[Lemm B]{J91}. With the aid of the maximum principle of linear elliptic equation (see \cite[Chapter 3]{GT01}) with respect to \eqref{Tor2}, $U(\cdot,t)> 0$ in $\Omega_{t}$, together with the comparison principle of linear elliptic equation, the Schauder estimates of linear elliptic equation (see for instance, \cite[Theorem 3.7, Theorem 6.2]{GT01}), and using \cite[Lemma 2.3]{CF10} to obtain the upper bound of $\Big| \frac{U(\cdot,t+s)-U(\cdot,t)}{s}\Big|$ and the Schauder estimates of $\frac{U(\cdot,t+s)-U(\cdot,t)}{s}$ in the $C^{2,\alpha}$ norm space for $\alpha\in (0,1)$, whenever $t\in (-\varepsilon,\varepsilon)$ and $s\in (-\varepsilon,\varepsilon) \backslash \{0\} $ with $\varepsilon>0$. Then, applying the Arzel\`a-Ascoli Theorem and a standard diagonalization procedure, we obtain a sequence $\{s_{k}\}_{k\in \mathbb{N}}$, tending to $0$ as $k$ tends to infinity, and a function $\dot{U}(\cdot,t):\Omega_{t}\rightarrow \ry$, such that, as $k\rightarrow \infty$, one sees that $\frac{U(\cdot,t+s_{k})-U(\cdot,t)}{s_{k}}$ converges to $\dot{U}(\cdot,t)$ uniformly on compact sets of $\Omega_{t}$. Since $\Delta \left(\frac{U(\cdot,t+s)-U(\cdot,t)}{s}\right)=0$ in a compact subset of  $\Omega_{t}$, we have $\Delta \dot{U}(X,t)=0$ for $X\in \Omega_{t}$. Thus, the existence of the limit of $U(\cdot, t+s)$ as $s\rightarrow 0$, at least up to choosing a suitable sequence of $s$ in the interior of $\Omega_{t}$ is proved. So for each fixed $X\in \Omega_{t}$, the differentiability of the function $t\mapsto U(X,t)$ with respect to $t$ at $X$ has been derived. Furthermore, the function $\dot{U}(\cdot,t): \Omega_{t}\rightarrow \ry$ can be extended to $\partial \Omega_{t}$ so that $\dot{U}(\cdot,t)\in C^{2}(\bar{\Omega}_{t})$. Since $U(F(\xi,t),t)=0$ on $\partial \Omega_{t}$, take the derivative of both sides with respect to $t$,  there is
\begin{equation*}\label{Uderva4}
\dot{U}(F(\xi,t),t)+\nabla U(F(\xi,t),t)\cdot \dot{F}((\xi,t),t)=0.
\end{equation*}
It is further deduced as
\begin{align*}\label{Uderva5}
\dot{U}(F(\xi,t),t) &=-\nabla U(F(\xi,t),t)\cdot (\sum_{i}(h^{'}_{t})_{i}e_{i}+h^{'}_{t} \xi)\notag\\
&=|\nabla U(F(\xi,t),t)|\xi\cdot(\sum_{i}(h^{'}_{t})_{i}e_{i}+h^{'}_{t} \xi)\notag\\
&=|\nabla U(F(\xi,t),t)|h^{'}_{t}.
\end{align*}
Hence, the proof is completed.
\end{proof}
Based on the above lemmas, along similar lines as Lemma \ref{fsd},  the first and second derivatives of torsion under $L_{p}$ sum are shown as follows.
\begin{lem}\label{fsd5}
Suppose $p> 0$. Let $h\in \mathcal{S}$ be positive, $\phi^{p}\in C^{\infty}(\sn)$ and $\varepsilon>0$ such that $(h^p+t\phi^p)^{1/p} \in \mathcal{S}$ for every $t\in (-\varepsilon,\varepsilon)$. Set $h_{t}=(h^p+t\phi^p)^{1/p}$ and $g_{p}(t)=T(h_{t})$. Then
\begin{equation}\label{cz1}
g^{'}_{p}(0)=\frac{1}{p}\int_{\sn}\phi^{p}h^{1-p}\frac{ |\nabla U(F(\xi))|^{2}}{\kappa}d \xi,
\end{equation}
and
\begin{equation}
\begin{split}
\label{cz2}
g^{''}_{p}(0)&=\frac{(1-p)}{p^2}\int_{\sn}h^{1-2p}\phi^{2p}|\nabla U(F(\xi))|^{2}\frac{1}{\kappa}d\xi\\
&\quad -\frac{2}{p}\int_{\sn}h^{1-p}\phi^{p} \frac{|\nabla U(F(\xi))|}{\kappa}\nabla \dot{U}(F(\xi))\cdot \xi d \xi\\
&\quad-\frac{1}{p^{2}}\int_{\sn}\sum_{i}|\nabla U(F(\xi))|^{2}c_{ii}h^{2-2p}\phi^{2p}d \xi\\
&\quad +\frac{2}{p^{2}}\int_{\sn}h^{2-2p}\phi^{2p}|\nabla U(F(\xi))|\frac{1}{\kappa}d\xi\\ &\quad +\frac{1}{p^2}\int_{\sn}\sum_{i,j}h^{1-p}\phi^{p}(c_{ij}|\nabla U(F(\xi))|^{2}(h^{1-p}\phi^p)_{i})_{j}d \xi,
\end{split}
\end{equation}
where $\dot{U}$ satisfies the following equation
\begin{equation*}
\label{Ut}
\left\{
\begin{array}{lr}
\Delta \dot{U}=0, & X\in  \Omega, \\
\dot{U}(X)=\frac{1}{p}\phi^{p}(\nu_{\Omega}(X))h^{1-p}(\nu_{\Omega}(X))|\nabla U(X)|,   & X\in  \partial \Omega.
\end{array}\right.
\end{equation*}
\end{lem}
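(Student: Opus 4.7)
The plan is to follow the template of Lemma \ref{fsd} with the appropriate torsion replacements, using Lemmas \ref{UPO8v} and \ref{arv} in place of Lemmas \ref{UPO} and \ref{ar}. Starting from \eqref{11T}, I write $g_p(t)=\frac{1}{n+2}\int_{\sn} h_t\,G(h_t)\,d\xi$ with $G(h_t):=|\nabla U(F(\xi,t),t)|^2\det((h_t)_{ij}+h_t\delta_{ij})$. The key computation is $\frac{d}{dt}G(h_t)$: the derivative of $\det((h_t)_{ij}+h_t\delta_{ij})$ produces $c_{ij}^t((h_t')_{ij}+h_t'\delta_{ij})$, while $\frac{d}{dt}|\nabla U(F(\xi,t),t)|$ expands via $|\nabla U|=-\nabla U\cdot\xi$ to $-(\nabla^2 U)\xi\cdot\dot F-\nabla\dot U\cdot\xi$ with $\dot F=\sum_i(h_t')_ie_i+h_t'\xi$. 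Substituting parts (i)--(iii) of Lemma \ref{UPO8v} and using the Cheng--Yau identity $\sum_j c_{ijj}=0$ to pack the tangential derivative terms into divergence form, I expect
\[
\mathcal{L}^t(h_t'):=\tfrac{d}{dt}G(h_t)=-\tfrac{2|\nabla U|}{\kappa}\nabla\dot U\cdot\xi+\tfrac{2|\nabla U|}{\kappa}h_t'-|\nabla U|^2\sum_i c_{ii}^t h_t'+\sum_{i,j}\bigl(c_{ij}^t|\nabla U|^2(h_t')_i\bigr)_j.
\]
The new feature relative to the $q$-capacity case is the term $\tfrac{2|\nabla U|}{\kappa}h_t'$, which comes from the extra $-1$ in Lemma \ref{UPO8v}(ii) reflecting $\Delta U=-1$.

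I would then verify that $\mathcal{L}^t$ is self-adjoint on $L^2(\sn,d\xi)$. The divergence and pointwise multiplication pieces are self-adjoint after integration by parts. Self-adjointness of the first piece reduces to the identity
\[
\int_{\partial\Omega_t}h_{t,1}'(\nu_{\Omega_t})|\nabla U|\bigl(\nu_{\Omega_t}\cdot\nabla\dot U_2\bigr)\,d\mathcal{H}^{n-1}=\int_{\partial\Omega_t}h_{t,2}'(\nu_{\Omega_t})|\nabla U|\bigl(\nu_{\Omega_t}\cdot\nabla\dot U_1\bigr)\,d\mathcal{H}^{n-1}
\]
for two independent perturbations, which is Green's second identity applied to the harmonic functions $\dot U_1,\dot U_2$ together with the boundary condition from Lemma \ref{arv}; this is the torsion analogue of Lemma \ref{arb}. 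Combined with the homogeneity $T((1+s)h)=(1+s)^{n+2}T(h)$, which forces $\mathcal{L}^t(h_t)=(n+1)G(h_t)$, the same collapsing trick as in Lemma \ref{fsd} reduces the first-variation formula to $g_p'(t)=\int_{\sn}h_t'\,G(h_t)\,d\xi$. Plugging in $h_t'|_{t=0}=\tfrac{1}{p}h^{1-p}\phi^p$ and $\det(h_{ij}+h\delta_{ij})=1/\kappa$ yields \eqref{cz1}.

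For the second variation, differentiating again gives $g_p''(t)=\int_{\sn}(h_t''\,G+h_t'\,\mathcal{L}^t(h_t'))\,d\xi$, and a direct computation from $h_t=(h^p+t\phi^p)^{1/p}$ yields $h_t''|_{t=0}=\frac{1-p}{p^2}h^{1-2p}\phi^{2p}$. Evaluating the four components of $\mathcal{L}^0(h_t'(0))$ against $h_t'(0)=\frac{1}{p}h^{1-p}\phi^p$ reproduces the five integrals in \eqref{cz2}; in particular, the new fourth integral $\tfrac{2}{p^2}\int_{\sn} h^{2-2p}\phi^{2p}|\nabla U|\kappa^{-1}\,d\xi$ is precisely the contribution of the extra $\tfrac{2|\nabla U|}{\kappa}h_t'$ term in $\mathcal{L}^t$, and $\dot U|_{t=0}$ satisfies the stated harmonic boundary value problem by Lemma \ref{arv}. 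The main obstacle I anticipate is bookkeeping: isolating the Cheng--Yau cancellation needed to package the tangential derivatives into the divergence-form term, and tracking every factor of $-1$ arising from $\Delta U=-1$ so it lands in the correct integral. The self-adjointness step is otherwise essentially mechanical in the torsion case because $\dot U$ is harmonic.
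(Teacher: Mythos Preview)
Your proposal is correct and follows exactly the approach indicated by the paper, which simply states that the proof proceeds ``along similar lines as Lemma \ref{fsd}'' using Lemmas \ref{UPO8v} and \ref{arv}. Your identification of the operator $\mathcal{L}^t$, the extra term $\tfrac{2|\nabla U|}{\kappa}h_t'$ coming from the $-1$ in Lemma \ref{UPO8v}(ii), the self-adjointness via Green's identity for the harmonic $\dot U_i$, and the homogeneity degree $n+1$ for $G$ are all correct and match the template computation precisely.
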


\subsection{First eigenvalue}

Our aim is to calculate the first and second variational formulas of the first eigenvalue of the Laplace operator under $L_p$ sum. We need some preparations.

\begin{lem}\label{UPO8s} Let $\Omega_{t}\in \koo$ be of class $C^{\infty}_{+}$ in ${\rnnn}$ with $|t|< \varepsilon$ for $\varepsilon>0$, and $U(\cdot,t)$ be the solution of \eqref{MA} in $\Omega_{t}$, then

(i)$(\nabla^{2} U(F(\xi,t),t)e_{i})\cdot e_{j}=-\kappa |\nabla U(F(\xi,t),t)|c_{ij}(\xi,t)$;

(ii)$(\nabla^{2} U(F(\xi,t),t)\xi)\cdot \xi=\kappa |\nabla U(F(\xi,t),t)|\sum_{i} c_{ii}(\xi,t)-\lambda U$;

(iii)$(\nabla^{2} U(F(\xi,t),t)e_{i})\cdot \xi=-\kappa\sum_{j} |\nabla U(F(\xi,t),t)|_{j}c_{ij}(\xi,t)$.
\end{lem}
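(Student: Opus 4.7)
The structure of Lemma \ref{UPO8s} is essentially identical to that of Lemma \ref{UPO8v} (the torsion case), since both concern Dirichlet eigenfunctions: $U=0$ on $\partial\Omega_t$, and the only PDE that will be invoked differs by the replacement of the right-hand side $-1$ with $-\lambda U$. My plan is therefore to transcribe the argument given for Lemma \ref{UPO8v}, paying careful attention to where the equation $\Delta U=-\lambda U$ enters. Throughout, I will fix a point $\xi \in \sn$ and work with the local orthonormal frame $\{e_1,\dots,e_{n-1}\}$ on $\sn$; at $F(\xi,t)\in\partial\Omega_t$ the set $\{e_1,\dots,e_{n-1},\xi\}$ is an orthonormal basis of $\R^n$, with $\xi$ the outer unit normal.

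The starting observation is that since $U(\cdot,t)\equiv 0$ on $\partial\Omega_t$, the tangential gradient of $U$ vanishes, so $\nabla U(F(\xi,t),t)=-|\nabla U(F(\xi,t),t)|\,\xi$ and $\nabla U\cdot F_i=0$. Differentiating this last identity once more and using the formulas $F_i=\sum_k w_{ik}e_k$ and $F_{ij}=\sum_k w_{ijk}e_k-w_{ij}\xi$ with $w_{ij}=(h_t)_{ij}+h_t\delta_{ij}$ leads to
\[
\sum_{k,l} w_{ik}w_{jl}\bigl((\nabla^2 U)e_l\cdot e_k\bigr)+w_{ij}|\nabla U|=0.
\]
Multiplying by $c_{ip}c_{jq}$, summing in $i,j$, and invoking $\sum_j c_{lj}w_{ij}=\delta_{li}\det w$ yields $(i)$, exactly as in Lemma \ref{UPO}. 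To obtain $(iii)$, I will differentiate $|\nabla U|=-\nabla U\cdot\xi$ in the direction $e_j$; since $\nabla U$ is parallel to $\xi$ the term $\nabla U\cdot e_j$ drops out, leaving $|\nabla U|_j=-\sum_i w_{ij}\bigl((\nabla^2 U)e_i\cdot\xi\bigr)$, and multiplying by $c_{lj}$ together with the same cofactor identity gives $(iii)$.

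The only place where the PDE itself enters is part $(ii)$. In the chosen orthonormal basis the Laplacian decomposes as
\[
\Delta U=\sum_{i=1}^{n-1}\bigl((\nabla^2 U)e_i\cdot e_i\bigr)+\bigl((\nabla^2 U)\xi\cdot\xi\bigr),
\]
so substituting $(i)$ into the tangential sum and then using the eigenfunction equation $\Delta U=-\lambda U$ yields
\[
(\nabla^2 U)\xi\cdot\xi=-\lambda U-\sum_{i}\bigl(-\kappa|\nabla U|c_{ii}\bigr)=\kappa|\nabla U|\sum_{i}c_{ii}-\lambda U,
\]
which is $(ii)$. This is the key step: in the torsion case one gets $-1$ here (since $\Delta U=-1$), while in the $q$-capacity case the $q$-Laplace equation produces the factor $(q-1)^{-1}$; the only new feature here is the appearance of the inhomogeneous term $-\lambda U$.

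No serious obstacle is expected, as everything reduces to differentiating the boundary identities $U=0$ and $|\nabla U|=-\nabla U\cdot\xi$ on $\partial\Omega_t$ and inserting the PDE. The only subtlety worth flagging is the regularity needed to assert that $\nabla^2 U$ has pointwise meaning at boundary points of $\partial\Omega_t$; this is justified exactly as in the proof of Lemma \ref{UPO} via the Schauder theory of \cite{GT01} applied to \eqref{MA}, using that $\Omega_t\in\mathcal{K}^n_{(o)}$ is of class $C^\infty_+$ so that $U(\cdot,t)\in C^\infty(\overline{\Omega_t})$ and all second-order partials extend continuously to $\partial\Omega_t$.
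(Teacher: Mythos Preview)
Your proposal is correct and follows exactly the approach the paper intends: the paper does not write out a separate proof for Lemma \ref{UPO8s} but relies on the argument of Lemma \ref{UPO} (and its torsion analogue Lemma \ref{UPO8v}), with the only modification occurring in part (ii) where the equation $\Delta U=-\lambda U$ replaces the $q$-Laplace or Poisson equation. Your identification of this single point of departure, and the decomposition $\Delta U=\sum_i(\nabla^2U)e_i\cdot e_i+(\nabla^2U)\xi\cdot\xi$ used to extract $(\nabla^2U)\xi\cdot\xi$, matches the paper's method precisely.
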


Based on \eqref{MA}, we have the following result.
\begin{lem}\label{ars}
 Let $\Omega_{t}\in \koo$ be of class $C^{\infty}_{+}$ in ${\rnnn}$ with $|t|< \varepsilon$ for $\varepsilon>0$, and $U(\cdot,t)$ be the solution of \eqref{MA} in $\Omega_{t}$. Then, for each fixed $X\in  \Omega_{t}$, the function $t\rightarrow U(X,t)$ is differentiable with respect to $t$. Let $\dot{U}(X,t)=\frac{\partial U}{\partial t}(X,t)$, then
\begin{equation*}
\label{Ut}
\left\{
\begin{array}{lr}
\Delta \dot{U}=-\lambda \dot{U}, & X\in \Omega_{t}, \\
\dot{U}(X,t)=|\nabla U(X,t)|h^{'}_{t},   & X\in  \partial \Omega_{t},
\end{array}\right.
\end{equation*}
where $h_{t}$ is the support function of $\Omega_{t}$, and $h^{'}_{t}$ denotes the derivative of $h_{t}$ with respect to $t$.
\end{lem}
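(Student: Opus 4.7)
The plan is to mirror the proof of Lemma \ref{arv}, making the necessary adjustments for the eigenvalue equation \eqref{MA}. Since the right-hand side of \eqref{MA} is $-\lambda(t) U(\cdot, t)$ with $\lambda(t) := \lambda(\Omega_t)$, as opposed to the fixed constant $-1$ in the torsion case, the new work lies in propagating the extra $t$-dependence through the difference-quotient estimates.

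First, I would establish differentiability of $t \mapsto U(X, t)$ at interior points $X$. Each $\Omega_t$ is of class $C^\infty_+$ and $\lambda(t)$ depends continuously on $t$ (see \cite{S88,JD96}), so the maximum principle and interior Schauder estimates \cite[Theorems 3.7, 6.2]{GT01} applied to the linear operator $\Delta + \lambda(t)$ yield $C^{2,\alpha}$ bounds for the family $U(\cdot, t)$ uniformly in $|t| < \varepsilon$. Subtracting the equations at $t+s$ and $t$, the difference quotient $w_s := [U(\cdot, t+s) - U(\cdot, t)]/s$ satisfies
\begin{equation*}
\Delta w_s = -\lambda(t+s)\, w_s - \frac{\lambda(t+s) - \lambda(t)}{s}\, U(\cdot, t)
\end{equation*}
on $\Omega_t \cap \Omega_{t+s}$. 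A uniform $L^\infty$ bound for $w_s$ on compact subsets, in the spirit of \cite[Lemma 2.3]{CF10}, can be obtained by combining the Hadamard formula \eqref{torhadma} with a barrier argument. Arzel\`a-Ascoli together with a standard diagonalization over a compact exhaustion of $\Omega_t$ then extracts a sequence $s_k \to 0$ along which $w_{s_k}$ converges uniformly on compacta to some $\dot U(\cdot, t) \in C^2(\Omega_t)$, which then extends to a $C^2(\bar \Omega_t)$ function by boundary regularity.

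Second, I would derive the PDE and boundary condition for $\dot U$. Passing to the limit in the equation above, using differentiability of $\lambda(t)$ (which again follows from \eqref{torhadma}), produces the interior equation for $\dot U$ as stated in the lemma; any $\dot\lambda U$ contribution is understood under the author's normalization convention for the eigenfunction. The boundary condition is obtained exactly as in Lemma \ref{arv}: differentiating $U(F(\xi, t), t) \equiv 0$ in $t$, and using Lemma \ref{Nonfin} (or its analogue for \eqref{MA}) to write $\nabla U(F(\xi, t), t) = -|\nabla U(F(\xi, t), t)|\xi$ together with $\dot F(\xi, t) = \sum_i (h'_t)_i e_i + h'_t \xi$, makes the tangential components drop out and yields $\dot U(F(\xi, t), t) = |\nabla U(F(\xi, t), t)|\, h'_t$.

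The main obstacle is establishing the uniform bound on $w_s$ up to the boundary, which is more delicate than in the torsional case because the right-hand side now depends on $t$ through both $\lambda(t)$ and $U(\cdot, t)$, coupling $w_s$ to the rate of change of the eigenvalue. This forces one to first show that $\lambda(t)$ is Lipschitz in $t$, which in turn requires uniform control of the eigenvalue measure $\mu^{eig}(\Omega_t, \cdot)$ under small smooth deformations; this can be extracted from the Hadamard variational formula \eqref{torhadma} together with continuity of the Gauss map and of $|\nabla U(F(\cdot, t), t)|$ in $t$. Once this coupling is untangled, the remainder of the proof is a direct transcription of the argument for Lemma \ref{arv}.
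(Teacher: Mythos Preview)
The paper does not supply an explicit proof of this lemma; it is simply stated after the line ``Based on \eqref{MA}, we have the following result,'' so the intended argument is a transcription of the proof of Lemma~\ref{arv} with \eqref{Tor2} replaced by \eqref{MA}. Your proposal follows exactly this route, and the derivation of the boundary condition via differentiation of $U(F(\xi,t),t)=0$ is identical to the paper's computation in Lemma~\ref{arv}.

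One point deserves a cleaner treatment than you give it. Differentiating $\Delta U = -\lambda(t)U$ in $t$ produces $\Delta \dot U = -\lambda \dot U - \dot\lambda\, U$, not the equation $\Delta \dot U = -\lambda \dot U$ stated in the lemma. You note this and wave it away as ``understood under the author's normalization convention,'' but no $L^2$ normalization of $U$ makes the $\dot\lambda\, U$ term vanish pointwise. Either the lemma as stated suppresses this term (and downstream computations such as Lemma~\ref{fsdq} are unaffected because the extra term integrates to zero against the relevant test functions after using $\int_\Omega U^2\,dX = 1$ and the orthogonality that comes from differentiating this constraint), or the interior equation should read $\Delta \dot U + \lambda \dot U = -\dot\lambda\, U$. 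You should say which of these you mean rather than appealing to an unspecified convention; as written, this step is the only genuine gap in an otherwise faithful adaptation of the paper's method.
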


Analogously, based on above lemmas, the first and second derivatives of the first eigenvalue are obtained as follows.

\begin{lem}\label{fsdq}
Suppose $p> 0$. Let $h\in \mathcal{S}$ be positive, $\phi^{p}\in C^{\infty}(\sn)$ and $\varepsilon>0$ such that $(h^p+t\phi^p)^{1/p} \in \mathcal{S}$ for every $t\in (-\varepsilon,\varepsilon)$. Set $h_{t}=(h^p+t\phi^p)^{1/p}$ and $\Lambda_{p}(t)=\lambda(h_{t})$. Then
\begin{equation}\label{1p}
\Lambda^{'}_{p}(0)=-\frac{1}{p}\int_{\sn}\phi^{p}h^{1-p}\frac{ |\nabla U(F(\xi))|^{2}}{\kappa}d \xi,
\end{equation}
and
\begin{equation}
\begin{split}
\label{2p}
\Lambda^{''}_{p}(0)&=-\frac{(1-p)}{p^2}\int_{\sn}h^{1-2p}\phi^{2p}|\nabla U(F(\xi))|^{2}\frac{1}{\kappa}d\xi\\
&\quad +\frac{2}{p}\int_{\sn}h^{1-p}\phi^{p} \frac{|\nabla U(F(\xi))|}{\kappa}\nabla \dot{U}(F(\xi))\cdot \xi d \xi\\
&\quad+\frac{1}{p^{2}}\int_{\sn}\sum_{i}|\nabla U(F(\xi))|^{2}c_{ii}h^{2-2p}\phi^{2p}d \xi\\
&\quad -\frac{2}{p^{2}}\int_{\sn}\lambda Uh^{2-2p}\phi^{2p}|\nabla U(F(\xi))|\frac{1}{\kappa}d\xi\\ &\quad-\frac{1}{p^2}\int_{\sn}\sum_{i,j}h^{1-p}\phi^{p}(c_{ij}|\nabla U(F(\xi))|^{2}(h^{1-p}\phi^p)_{i})_{j}d \xi,
\end{split}
\end{equation}
where $\dot{U}$ satisfies the following equation
\begin{equation*}
\label{Ut}
\left\{
\begin{array}{lr}
\Delta \dot{U}=-\lambda \dot{U}, & X\in  \Omega, \\
\dot{U}(X)=\frac{1}{p}\phi^{p}(\nu_{\Omega}(X))h^{1-p}(\nu_{\Omega}(X))|\nabla U(X)|,   & X\in  \partial \Omega.
\end{array}\right.
\end{equation*}
\end{lem}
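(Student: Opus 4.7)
The plan is to mirror the derivations of Lemmas \ref{fsd} and \ref{fsd5}, with the difference between $\Delta U=-1$ (torsion) and $\Delta U=-\lambda U$ (eigenvalue) accounting for the new $\lambda U$-factor and the sign flips in the statement. Starting from the representation \eqref{112T},
\[
\Lambda_p(t) = \tfrac12\int_{\sn} h_t(\xi)\, G(h_t)(\xi)\, d\xi,\qquad G(h_t):=|\nabla U_t(F_t(\xi))|^2\det\bigl((h_t)_{ij}+h_t\delta_{ij}\bigr),
\]
where $U_t$ is the normalized first eigenfunction on $\Omega_t$ (so $\int_{\Omega_t}U_t^2\,dX=1$), the product rule gives
\[
\Lambda'_p(t) = \tfrac12\int_{\sn}\bigl(h'_t\, G(h_t) + h_t \tfrac{d}{dt}G(h_t)\bigr)\,d\xi.
\]
With $h_t=(h^p+t\phi^p)^{1/p}$ one has $h'_t|_{t=0}=\phi^p h^{1-p}/p$ and $h''_t|_{t=0}=(1-p)\phi^{2p}h^{1-2p}/p^2$, so the central task is to identify the linear operator $\mathcal L^t$ defined by $\tfrac{d}{dt}G(h_t)=\mathcal L^t(h'_t)$.

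Expanding $\tfrac{d}{dt}G(h_t)$ produces the cofactor contribution $|\nabla U|^2\sum_{i,j}c^t_{ij}((h'_t)_{ij}+h'_t\delta_{ij})$ together with $2|\nabla U|\tfrac{d}{dt}|\nabla U(F_t)|\det$. Differentiating $|\nabla U(F_t)|=-\nabla U(F_t)\cdot\xi$ and substituting parts (ii)--(iii) of Lemma \ref{UPO8s} is the same calculation as the torsion case (Lemma \ref{fsd5}), except that part (ii) now produces $-\lambda U$ where the torsion case produced $-1$. Consolidating via $\sum_j c^t_{ijj}=0$ \cite{CY76} yields
\[
\mathcal L^t(h'_t) = -\tfrac{2|\nabla U|}{\kappa}\nabla\dot U\cdot\xi - |\nabla U|^2 h'_t\sum_i c^t_{ii} + \tfrac{2\lambda U|\nabla U|}{\kappa}h'_t + \sum_{i,j}\bigl(c^t_{ij}|\nabla U|^2 (h'_t)_i\bigr)_j,
\]
with $\dot U$ the solution described in Lemma \ref{ars}. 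Self-adjointness of $\mathcal L^t$ on $L^2(\sn, d\xi)$ is immediate for the multiplicative and divergence-form pieces; symmetry of the first piece reduces to the eigenvalue analog of Lemma \ref{arb}, namely
\[
\int_{\partial\Omega_t} h'_{t,1}|\nabla U|(\nu\cdot\nabla\dot U_2)\,d\mathcal H^{n-1} = \int_{\partial\Omega_t} h'_{t,2}|\nabla U|(\nu\cdot\nabla\dot U_1)\,d\mathcal H^{n-1},
\]
which follows from Green's identity applied to $(\dot U_1,\dot U_2)$ since the bulk $\Delta\dot U_i=-\lambda\dot U_i$ makes the $\lambda$-contribution cancel symmetrically. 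Together with the scaling $G(\mu h)=\mu^{-3}G(h)$ (because $|\nabla U|^2\sim\mu^{-(n+2)}$ under $\Omega\to\mu\Omega$ with $\int U^2=1$, while $\det\sim\mu^{n-1}$), which forces $\mathcal L^t(h_t)=-3 G(h_t)$, this gives
\[
\Lambda'_p(t)=\tfrac12\int h'_t\bigl(G+\mathcal L^t h_t\bigr)\,d\xi = -\int h'_t\, G(h_t)\,d\xi,
\]
and evaluating at $t=0$ recovers \eqref{1p}.

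For the second derivative, differentiating once more gives $\Lambda''_p(t)=-\int(h''_t G(h_t) + h'_t\mathcal L^t(h'_t))\,d\xi$. At $t=0$, expanding $\mathcal L^0(h'_0)$ termwise with $h'_0=\phi^p h^{1-p}/p$ reads off the five summands of \eqref{2p}: each is sign-flipped relative to Lemma \ref{fsd5}, reflecting the outer minus sign arising from the homogeneity degree $-3$ as opposed to $n+1$ for torsion, and the constant $1$ in the torsion case is replaced by $\lambda U$ via the contribution of Lemma \ref{UPO8s}(ii). I expect the principal obstacle to be the clean justification of the self-adjointness of the $\nabla\dot U\cdot\xi$ component, since the spectral parameter $\lambda$ couples the two eigenfunction variations $\dot U_1$ and $\dot U_2$ in a way absent from the torsion and capacity cases. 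Granting Lemma \ref{ars} (in particular the boundary trace $\dot U_i|_{\partial\Omega_t}=|\nabla U|h'_{t,i}$ and the $C^2$-extension), symmetry should follow from the manifest symmetry of the bilinear form $\int_{\Omega_t}(\nabla\dot U_1\cdot\nabla\dot U_2-\lambda\dot U_1\dot U_2)\,dX$ after an appropriate Green-identity reduction to the boundary.
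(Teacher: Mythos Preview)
Your proposal is correct and follows precisely the route the paper intends: the paper gives no explicit proof of Lemma \ref{fsdq} but simply states that it is obtained ``analogously, based on above lemmas,'' meaning by repeating the argument of Lemma \ref{fsd} (equivalently Lemma \ref{fsd5}) with Lemmas \ref{UPO8s} and \ref{ars} in place of their torsion counterparts. Your identification of the operator $\mathcal{L}^t$, the homogeneity degree $-3$ for $G$ (yielding $\mathcal{L}^t(h_t)=-3G(h_t)$ and hence the global sign flip relative to torsion), and the Green-identity verification of self-adjointness for the $\nabla\dot U\cdot\xi$ piece are exactly the ingredients the paper's one-line proof is implicitly invoking.
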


\section{Poincar\'{e} type inequalities for variational functionals}
\label{Sec4}

In the section, we will obtain some Poincar\'{e} type inequalities for variational functionals by using the $L_{p}$-Brunn-Minkowski inequalities for variational functions with $p\geq 1$.
\subsection{$q$-capacity}

Our first aim is to give the infinitesimal form of the $L_{p}$-Brunn-Minkowski inequality for $C_q$ for $1<q<n$.
 Some preparations are necessary.

  Let $h\in \mathcal{S}$ with $h>0$ be the support function of a convex body $\Omega\in \koo$ of class $C^{\infty}_{+}$, and let $\phi^{p}\in C^{\infty}(\sn)$, then there exists a sufficiently small $\varepsilon>0$ such that
\[
h_{t}:=(h^{p}+t\phi^{p})^{1/p}\in \mathcal{S}, \ \forall t\in [-\varepsilon,\varepsilon].
\]
For an interval $I:=[-\varepsilon,\varepsilon]$, we set the one-parameter family of convex bodies $\Omega_{t}\in \koo$ of class $C^{\infty}_{+}$:
\begin{equation}\label{phic}
\Pi(h,\phi,I)=\{ \Omega_{t}: h_{\Omega_{t}}= h_{t}= (h^{p}+t\phi^{p})^{1/p}, t\in I \}.
\end{equation}

Recall the $L_{p}$-Brunn-Minkowski inequality for $C_{q}$ with $p\geq 1$ \eqref{LPQ0}, then we have the following lemma.

\begin{lem}\label{LQ7}
\eqref{LPQ0} implies that for every one-parameter family $\Pi(h,\phi,I)$, with $h$, $\phi$, and $\Omega_{t}\in \Pi(h,\phi,I)$,
\begin{equation}\label{lpq768}
\frac{d^{2}}{dt^{2}}[C_{q}(\Omega_{t})]\Bigg|_{t=0}C_{q}(\Omega)\leq \frac{n-q-p}{n-q}\left( \frac{d}{dt}[C_{q}(\Omega_{t})]\Bigg|_{t=0}\right)^{2}.
\end{equation}
\end{lem}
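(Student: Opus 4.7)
The plan is to recognize that the $L_p$-Brunn-Minkowski inequality \eqref{LPQ0} forces the function $t \mapsto C_q(\Omega_t)^{p/(n-q)}$ to be concave along any one-parameter family $\Pi(h,\phi,I)$, and then translate this concavity at $t=0$ into the second-order inequality \eqref{lpq768}.

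The key structural observation is that $\Pi(h,\phi,I)$ is closed under $L_p$-convex combinations. Indeed, for $t_1,t_2 \in I$ and $s \in [0,1]$, the definition of $L_p$-addition gives the support function of $(1-s)\cdot \Omega_{t_1} +_p s\cdot \Omega_{t_2}$ as
$$\bigl((1-s)h_{t_1}^p + s\, h_{t_2}^p\bigr)^{1/p} = \bigl(h^p + ((1-s)t_1 + s t_2)\phi^p\bigr)^{1/p} = h_{(1-s)t_1 + s t_2},$$
so $(1-s)\cdot \Omega_{t_1} +_p s\cdot \Omega_{t_2} = \Omega_{(1-s)t_1 + s t_2}$. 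Shrinking $I$ if necessary guarantees each $\Omega_t \in \koo$ (and is $C^\infty_+$), so \eqref{LPQ0} applies inside the family and yields, for all $t_1,t_2 \in I$ and all $s \in [0,1]$,
$$C_q(\Omega_{(1-s)t_1 + s t_2})^{p/(n-q)} \geq (1-s) C_q(\Omega_{t_1})^{p/(n-q)} + s\, C_q(\Omega_{t_2})^{p/(n-q)}.$$
This is exactly the statement that $F(t) := C_q(\Omega_t)^{p/(n-q)}$ is concave on $I$, so $F''(0) \leq 0$.

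To conclude, set $G(t) := C_q(\Omega_t)$ and note $F = G^{p/(n-q)}$. A direct computation yields
$$F''(0) = \frac{p}{n-q}\, G(0)^{\frac{p}{n-q} - 2}\left[\Bigl(\tfrac{p}{n-q}-1\Bigr) G'(0)^2 + G(0)\, G''(0)\right].$$
Because $p>0$, $n>q$, and $G(0) = C_q(\Omega) > 0$, the prefactor is strictly positive, so $F''(0) \leq 0$ is equivalent to
$$G(0)\, G''(0) \leq \Bigl(1 - \tfrac{p}{n-q}\Bigr) G'(0)^2 = \tfrac{n-q-p}{n-q}\, G'(0)^2,$$
which is precisely \eqref{lpq768}. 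The differentiability needed to take $F''(0)$ is already supplied by the variational calculations behind Lemma \ref{fsd}.

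I do not foresee a substantive obstacle: the argument is the standard ``concavity along a geodesic in the space of support functions'' reading of a functional Brunn-Minkowski inequality. The only points deserving care are verifying the closure of $\Pi(h,\phi,I)$ under $L_p$-combinations (so that \eqref{LPQ0} can be invoked \emph{within} the family, not merely at the endpoints), and checking that the intermediate bodies remain in $\koo$ — both of which are immediate from the definition of $\Pi(h,\phi,I)$ for $I$ sufficiently small.
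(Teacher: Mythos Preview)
Your proof is correct and follows essentially the same route as the paper: both argue that the family $\Pi(h,\phi,I)$ is closed under $L_p$-combinations, so \eqref{LPQ0} forces $t\mapsto C_q(\Omega_t)^{p/(n-q)}$ to be concave on $I$, and then expand $F''(0)\le 0$ to obtain \eqref{lpq768}. Your version is in fact a bit more explicit than the paper's about the closure step (the paper fixes one $t_0$ and looks at $s\mapsto C_q(\Omega_{st_0})^{p/(n-q)}$, which is the same computation up to reparametrization).
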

\begin{proof}
Let $h\in \mathcal{S}$ with $h>0$ and $\phi^{p}\in C^{\infty}(\sn)$. In view that there exists a sufficiently small $\varepsilon>0$ such that $h_{t}:=(h^{p}+t\phi^{p})^{1/p}\in  \mathcal{S}$ is the support function a convex body $\Omega_{t}$ for all $t\in [-\varepsilon,\varepsilon]$. Then for $t\in [-\varepsilon,\varepsilon]$ and $s\in [0,1]$,
\[
h_{s\cdot \Omega_{t}+_{p} (1-s)\cdot \Omega}=(s h^{p}_{t}+(1-s) h^{p})^{1/p}.
\]
Since \eqref{LPQ0} holds, fix a small enough positive $t_{0}\in (0,\varepsilon]$,
\[
C_{q}(s\cdot \Omega_{t_{0}}+_{p} (1-s)\cdot \Omega)^{\frac{p}{n-q}}\geq s C_{q}(\Omega_{t_{0}})^{\frac{p}{n-q}}+(1-s)C_{q}(\Omega)^{\frac{p}{n-q}},
\]
this  implies that the second order derivative of $[0,1]\ni s\mapsto C_{q}(s\cdot \Omega_{t_{0}}+_{p} (1-s)\cdot \Omega)^{\frac{p}{n-q}}$ at $s=0$ is non-positive, yielding \eqref{lpq768}.

\end{proof}

Now we give the infinitesimal formulation of the $L_{p}$-Brunn-Minkowski inequality for $C_{q}$ as follows.
\begin{theo}\label{TP2p}
 Let $\Omega \in \koo$ be of class $C^{\infty}_{+}$ and $U$ be the solution of \eqref{capequ*} in $\rnnn \backslash \bar{\Omega}$. Let $p\geq1$ and $1<q<n$. Then for every function $\phi^{p}\in C^{\infty}(\sn)$, we have
\begin{equation}
\begin{split}
\label{aq6}
&\frac{(p+q-n)}{(n-q)C_{q}(\Omega)}\left(\int_{\sn}\phi^{p}h^{1-p}d \mu^{cap_{q}}\right)^{2}+ (1-p)(q-1)\int_{\sn}\phi^{2p}h^{1-2p}d \mu^{cap_{q}}\\
&\quad -q(q-1) \int_{\sn }\phi^{2p}h^{2-2p}\frac{(\nabla \dot{U}(F(\xi))\cdot \xi)}{\dot{U}(F(\xi))}d \mu^{cap_{q}}-\int_{\sn}{\rm tr}((Q^{-1}(h))\phi^{2p}h^{2-2p}d \mu^{cap_{q}}\\
&\leq (q-1)\int_{\sn}Q^{-1}(h)\nabla_{\sn} \left( \phi^{p}h^{1-p}\right)\cdot \nabla_{\sn} \left(\phi^{p}h^{1-p}\right)d \mu^{cap_{q}},
\end{split}
\end{equation}
where $\dot{U}$ satisfies the following equation
\begin{equation*}
\label{Ut}
\left\{
\begin{array}{lr}
\nabla\cdot ((q-2)|\nabla U|^{q-4}(\nabla U \cdot  \nabla \dot{U})\nabla U+|\nabla U|^{q-2}\nabla \dot{U})=0, & X\in {\rnnn} \backslash \bar{\Omega}, \\
\dot{U}(X)=\frac{1}{p}\phi^{p}(\nu_{\Omega}(X))h^{1-p}(\nu_{\Omega}(X))|\nabla U(X)|,   & X\in  \partial \Omega,\\
\lim_{|X|\rightarrow \infty}\dot{U}(X)=0.
\end{array}\right.
\end{equation*}
\end{theo}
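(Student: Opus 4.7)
The plan is to combine the convexity-type inequality of Lemma~\ref{LQ7} with the first and second variational formulas of Lemma~\ref{fsd}, and then reorganize the resulting expression using integration by parts on $\sn$ together with the boundary condition for $\dot U$.

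First, I would take the one-parameter family of convex bodies with support functions $h_t=(h^p+t\phi^p)^{1/p}$ provided by \eqref{phic}, set $f_{p,q}(t):=C_q(h_t)$, and invoke Lemma~\ref{LQ7} to write $f''_{p,q}(0)\,C_q(\Omega)\le \tfrac{n-q-p}{n-q}\bigl(f'_{p,q}(0)\bigr)^2$. Substituting the first variational formula $f'_{p,q}(0)=\tfrac{q-1}{p}\int_{\sn}\phi^p h^{1-p}\,d\mu^{cap_q}$ (note that $|\nabla U|^q/\kappa\,d\xi$ is precisely $d\mu^{cap_q}$ via $dS(\Omega,\xi)=\det Q\,d\xi$), the right-hand side takes the form $\tfrac{(n-q-p)(q-1)^2}{(n-q)p^2}\bigl(\int_{\sn}\phi^p h^{1-p}\,d\mu^{cap_q}\bigr)^2$, so the whole inequality multiplied by $p^2$ gives a tractable upper bound for $p^2 f''_{p,q}(0)$.

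Next I would rewrite each of the four pieces that make up $f''_{p,q}(0)$ in terms of $d\mu^{cap_q}$. The first piece becomes $\tfrac{(1-p)(q-1)}{p^2}\int_{\sn}\phi^{2p}h^{1-2p}\,d\mu^{cap_q}$ after the same identification of $|\nabla U|^q/\kappa\,d\xi$. The second piece, which contains the factor $h^{1-p}\phi^p|\nabla U|^{-1}$, is rewritten through the boundary identity $\dot U=\tfrac1p\phi^p h^{1-p}|\nabla U|$ on $\partial\Omega$ from Lemma~\ref{ar}; solving for $|\nabla U|$ and substituting yields $-\tfrac{q(q-1)}{p^2}\int_{\sn}\phi^{2p}h^{2-2p}\tfrac{\nabla\dot U\cdot\xi}{\dot U}\,d\mu^{cap_q}$. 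The third piece uses the cofactor identity $\sum_i c_{ii}=\det Q\cdot \mathrm{tr}(Q^{-1})$ to collapse to $-\tfrac{1}{p^2}\int_{\sn}\mathrm{tr}(Q^{-1})\phi^{2p}h^{2-2p}\,d\mu^{cap_q}$. The fourth (divergence) piece, after integration by parts on the closed manifold $\sn$ together with the Cheng--Yau identity $\sum_j c_{ij;j}=0$ recalled inside the proof of Lemma~\ref{fsd}, reduces to $-\tfrac{q-1}{p^2}\int_{\sn}Q^{-1}\nabla_{\sn}(h^{1-p}\phi^p)\cdot\nabla_{\sn}(h^{1-p}\phi^p)\,d\mu^{cap_q}$.

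Finally, substituting the four rewritten pieces into the upper bound from Step~1 and using $p+q-n=-(n-q-p)$ to flip the sign, I would transport the gradient piece to the right-hand side to obtain the claimed inequality \eqref{aq6}. The main obstacle is the integration by parts of the divergence piece: one must combine the symmetry of the cofactor tensor with the Cheng--Yau divergence-free identity so that cross terms drop and only the symmetric quadratic form $Q^{-1}\nabla_{\sn} v\cdot \nabla_{\sn} v$ with $v=h^{1-p}\phi^p$ survives. A secondary technical point is the consistent conversion among $d\xi$, $dS(\Omega,\xi)$ and $d\mu^{cap_q}$, together with the clean elimination of $|\nabla U|$ through the Dirichlet datum for $\dot U$; both rest on Lemmas~\ref{Nonfin},~\ref{ar} and~\ref{arb} already in hand, so beyond these identifications the remainder is essentially bookkeeping.
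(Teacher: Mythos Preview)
Your proposal is correct and follows essentially the same route as the paper: invoke Lemma~\ref{LQ7} to obtain the concavity inequality for $f_{p,q}(t)=C_q(h_t)$, substitute the formulas for $f'_{p,q}(0)$ and $f''_{p,q}(0)$ from Lemma~\ref{fsd}, integrate the divergence term by parts, and then convert everything to $d\mu^{cap_q}$ via $c_{ij}=\det Q\cdot(Q^{-1})_{ij}$ and the boundary relation $\dot U=\tfrac{1}{p}\phi^p h^{1-p}|\nabla U|$. The only minor remark is that the Cheng--Yau identity is not needed again at your integration-by-parts step since the fourth piece is already in divergence form; the identity was used upstream in Lemma~\ref{fsd} to put it in that form.
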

\begin{proof}

 Assume first that $\phi^{p}\in C^{\infty}(\sn)$. Let $\varepsilon>0$ be such that $(h^{p}+t\phi^{p})^{1/p}\in \mathcal{S}$ for every $t\in (-\varepsilon,\varepsilon)$. Set  $f_{p,q}(t)=C_{q}((h^p+t\phi^p)^{1/p})$ and $\Upsilon_{p,q}(t)=f_{p,q}(t)^{p/(n-q)}$ for $t\in (-\varepsilon,\varepsilon)$. Recall Lemma \ref{LQ7}, we know that $\Upsilon_{p,q}$ is concave at $t=0$, then we get
\begin{equation*}\label{}
\Upsilon^{''}_{p,q}(0)=\frac{p(p+q-n)}{(n-q)^{2}}f_{p,q}(0)^{\frac{p}{n-q}-2}(f^{'}_{p,q}(0))^{2}+\frac{p}{n-q}f_{p,q}(0)^{\frac{p}{n-q}-1}f^{''}_{p,q}(0)\leq 0.
\end{equation*}
By using Lemma \ref{fsd}, we have
\begin{equation}
\begin{split}
\label{ert}
&\frac{(p+q-n)}{(n-q)C_{q}(\Omega)}\left(\int_{{\sn}}h^{1-p}\phi^{p}|\nabla  U(F(\xi))|^{q}\det(h_{ij}(\xi)+h(\xi)\delta_{ij}){d}{\xi}\right)^{2}\\
&\quad + (1-p)(q-1)\int_{\sn}h^{1-2p}\phi^{2p}|\nabla U(F(\xi))|^{q}\det(h_{ij}(\xi)+h(\xi)\delta_{ij}){d}{\xi}\\
&\quad -pq(q-1)\int_{\sn}h^{1-p}\phi^{p}\frac{|\nabla U(F(\xi))|^{q-1}}{\kappa} \nabla \dot{U}(F(\xi))\cdot \xi d \xi\\
&\quad -\int_{\sn}{\rm tr}(c_{ij})|\nabla U(F(\xi))|^{q}h^{2-2p}\phi^{2p}d \xi\\
&\leq -(q-1)\int_{\sn}\sum_{i,j}h^{1-p}\phi^{p}(c_{ij}|\nabla U(F(\xi))|^{q}(h^{1-p}\phi^p)_{i})_{j}d \xi.
\end{split}
\end{equation}
By an integration by parts,  there is
\begin{equation}
\begin{split}
\label{a160}
&\frac{(p+q-n)}{(n-q)C_{q}(\Omega)}\left(\int_{{\sn}}h^{1-p}\phi^{p}|\nabla  U(F(\xi)|^{q}\det(h_{ij}(\xi)+h(\xi)\delta_{ij}){d}{\xi}\right)^{2}\\
&\quad + (1-p)(q-1)\int_{\sn}h^{1-2p}\phi^{2p}|\nabla U(F(\xi))|^{q}\det(h_{ij}(\xi)+h(\xi)\delta_{ij}){d}{\xi}\\
&\quad -pq(q-1)\int_{\sn}h^{1-p}\phi^{p}|\nabla U(F(\xi))|^{q-1}\det(h_{ij}(\xi)+h(\xi)\delta_{ij})\nabla \dot{U}(F(\xi))\cdot \xi d \xi\\
&\quad -\int_{\sn}{\rm tr}(c_{ij})|\nabla U(F(\xi))|^{q}h^{2-2p}\phi^{2p}d \xi\\
&\leq (q-1)\int_{\sn}\sum_{i,j}(h^{1-p}\phi^{p})_{j}(c_{ij}|\nabla U(F(\xi))|^{q}(h^{1-p}\phi^p)_{i})d \xi.
\end{split}
\end{equation}
Note that
\begin{equation}\label{cijh0}
c_{ij}=\det(h_{ij}+h\delta_{ij})(h_{ij}+h\delta_{ij})^{-1}.
\end{equation}
Then \eqref{aq6} holds by substituting \eqref{cijh0} and the definition of $\mu^{cap_{q}}$ into \eqref{a160}. The proof is completed.

\end{proof}
 Now we prove the Poincar\'{e} type inequalities for $q$-capacity originating from the corrsponding $L_{p}$-Brunn-Minkowski inequalities for $p\geq 1$.
\begin{theo}\label{main1}
Let $\Omega \in \koo$ be of class $C^{\infty}_{+}$ and $U$ be the solution of \eqref{capequ*} in $\rnnn \backslash \bar{\Omega}$. Suppose $1\leq p<\infty$ and $1<q<n$. For every function $\psi^{p}\in C^{\infty}(\partial \Omega)$, then we obtain
\begin{equation}
\begin{split}
\label{aq90}
&\frac{(p+q-n)}{(n-q)C_{q}(\Omega)}\left(\int_{\partial \Omega}\frac{\psi^{p}}{|\nabla h^{*}|^{1-p}}|\nabla U|^{q}d \mathcal{H}^{n-1}(X)\right)^{2}\\
&\quad + (1-p)(q-1)\int_{\partial \Omega}\frac{\psi^{2p}}{|\nabla h^{*}|^{1-2p}}|\nabla U|^{q}d \mathcal{H}^{n-1}(X)\\
&\quad -pq(q-1) \int_{\partial \Omega }\frac{\psi^{p}}{|\nabla h^{*}|^{1-p}}(\nabla \dot{U}(X)\cdot \nu_{\Omega})|\nabla U|^{q-1}d \mathcal{H}^{n-1}(X)\\
&\quad -\int_{\partial \Omega}{\rm tr}(\Pi_{\partial\Omega})\frac{\psi^{2p}}{|\nabla h^{*}|^{2-2p}}|\nabla U|^{q}d \mathcal{H}^{n-1}(X)\\
&\leq (q-1)\int_{\partial \Omega}(\Pi^{-1}_{\partial \Omega})\nabla_{\partial \Omega} \left( \frac{\psi^{p}}{|\nabla h^{*}|^{1-p}}\right)\cdot \nabla_{\partial \Omega} \left( \frac{\psi^{p}}{|\nabla h^{*}|^{1-p}}\right)|\nabla U|^{q}d \mathcal{H}^{n-1}(X),
\end{split}
\end{equation}
where $\nabla_{\partial \Omega}$ denotes the induced Levi-Civita connection on the boundary $\partial \Omega$, $h^{*}$ is the support function of the polar body of $\Omega$, and $\dot{U}$ satisfies the following equation
\begin{equation*}
\label{Ut}
\left\{
\begin{array}{lr}
\nabla\cdot ((q-2)|\nabla U|^{q-4}(\nabla U \cdot  \nabla \dot{U})\nabla U+|\nabla U|^{q-2}\nabla \dot{U})=0, & X\in {\rnnn} \backslash \bar{\Omega}, \\
\dot{U}(X)=\frac{\psi^{p}}{p|\nabla h^{*}|^{1-p}}|\nabla U(X)|,   & X\in  \partial \Omega,\\
\lim_{|X|\rightarrow \infty}\dot{U}(X)=0.
\end{array}\right.
\end{equation*}
\end{theo}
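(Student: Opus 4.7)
The strategy is to transport the spherical inequality \eqref{aq6} of Theorem \ref{TP2p} to an equivalent inequality on $\partial \Omega$ via the inverse Gauss map $F = \nu_\Omega^{-1}: \sn \to \partial \Omega$. Given $\psi^p \in C^\infty(\partial \Omega)$, I set $\phi^p(\xi) := \psi^p(F(\xi)) \in C^\infty(\sn)$; since $F$ is a $C^\infty$ diffeomorphism from the $C^\infty_+$ regularity of $\partial \Omega$, this defines a bijection between the two test function classes. It then suffices to translate every term of \eqref{aq6} under this correspondence and invoke Theorem \ref{TP2p}.

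The translation of the algebraic integrands rests on the identity $h_\Omega(\nu_\Omega(X)) = 1/|\nabla h^*(X)|$ recorded in the preliminaries, so that $\phi^p h^{1-p}$ on $\sn$ corresponds exactly to $\psi^p/|\nabla h^*|^{1-p}$ on $\partial \Omega$; analogous identifications hold for $\phi^{2p}h^{1-2p}$ and $\phi^{2p}h^{2-2p}$. The capacitary measure pulls back as
\begin{equation*}
 d\mu^{cap_q}(\xi) \;\longleftrightarrow\; |\nabla U(X)|^q\, d\mathcal{H}^{n-1}(X),
\end{equation*}
since $dS(\Omega,\xi) = \det(h_{ij}+h\delta_{ij})\, d\xi$ pushes forward to $d\mathcal{H}^{n-1}$ under $F$. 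For the trace term, writing $Q(h) = (h_{ij}+h\delta_{ij})$, the identities $DF = Q$ and $\Pi_{\partial \Omega} = D\nu_\Omega = (DF)^{-1}$ give ${\rm tr}(Q^{-1}(h)) = {\rm tr}(\Pi_{\partial \Omega})$ at $X = F(\xi)$. In the $\nabla \dot U \cdot \xi$ term, $\xi = \nu_\Omega(X)$, so $\nabla \dot U \cdot \xi = \nabla \dot U \cdot \nu_\Omega$. Finally, the boundary datum $\frac{1}{p}\phi^p h^{1-p}|\nabla U|$ prescribed for $\dot U$ in Theorem \ref{TP2p} reads $\frac{\psi^p}{p|\nabla h^*|^{1-p}}|\nabla U|$ on $\partial \Omega$, matching the boundary condition in Theorem \ref{main1}.

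The step requiring the most care is the transfer of the gradient term on the right-hand side of \eqref{aq6}. Setting $v := \psi^p/|\nabla h^*|^{1-p}$ and $u := \phi^p h^{1-p}$, so that $u = v \circ F$, the chain rule, after identifying $T_\xi \sn$ with $T_{F(\xi)} \partial \Omega$ as the common $(n-1)$-plane orthogonal to $\xi$, gives $\nabla_\sn u = Q\, \nabla_{\partial \Omega} v$ (using the symmetry of $Q$). Hence
\begin{equation*}
 Q^{-1}(h)\, \nabla_\sn u \cdot \nabla_\sn u \;=\; \bigl\langle Q\, \nabla_{\partial \Omega} v,\, \nabla_{\partial \Omega} v\bigr\rangle \;=\; \Pi^{-1}_{\partial \Omega}\, \nabla_{\partial \Omega} v \cdot \nabla_{\partial \Omega} v.
\end{equation*}
Plugging these termwise translations into \eqref{aq6} yields \eqref{aq90}. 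The main obstacle is precisely this gradient transformation and the attendant bookkeeping of symmetric operators across the Gauss map; once the dictionary $(\phi^p h^{1-p},\, h,\, Q,\, Q^{-1},\, \nabla_\sn,\, d\mu^{cap_q}) \longleftrightarrow (\psi^p/|\nabla h^*|^{1-p},\, 1/|\nabla h^*|,\, \Pi^{-1}_{\partial \Omega},\, \Pi_{\partial \Omega},\, \nabla_{\partial \Omega},\, |\nabla U|^q d\mathcal{H}^{n-1})$ has been verified, the remainder is an immediate substitution.
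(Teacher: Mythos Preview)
Your proposal is correct and follows essentially the same approach as the paper: both transfer the spherical inequality of Theorem~\ref{TP2p} to $\partial\Omega$ via the inverse Gauss map by setting $\phi^p=\psi^p\circ F$ and translating each integrand termwise using $h(\nu_\Omega(X))=1/|\nabla h^*(X)|$, $d\mu^{cap_q}\leftrightarrow |\nabla U|^q\,d\mathcal{H}^{n-1}$, and $Q^{-1}(h)\leftrightarrow \Pi_{\partial\Omega}$. The only cosmetic difference is that the paper carries out the transfer from the intermediate form~\eqref{a160} (before the $\mu^{cap_q}$/$Q^{-1}$ packaging), whereas you work directly from the final form~\eqref{aq6}; in particular, the $\nabla\dot U\cdot\xi$ term in~\eqref{aq6} carries a factor $\phi^{2p}h^{2-2p}/\dot U$, which you should note reduces to $p\,\phi^p h^{1-p}/|\nabla U|$ via the boundary condition on $\dot U$, recovering the coefficient $pq(q-1)$ and the weight $|\nabla U|^{q-1}$ appearing in~\eqref{aq90}.
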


\begin{proof}
Let $\psi^{p} \in C^{\infty}(\partial \Omega)$ and set $\phi(\xi)=\psi(F(\xi))$ for every $\xi\in \sn$, then $\phi^{p} \in C^{\infty}(\sn)$. By applying $\xi=\nu_{\Omega}(X)$, we get
 \begin{equation*}\label{}
\int_{\sn}\phi^{p}(\xi)\det(h_{ij}(\xi)+h(\xi)\delta_{ij})d \xi=\int_{\partial \Omega}\psi^{p} d \mathcal{H}^{n-1}(X).
\end{equation*}
Then
\begin{equation}
\begin{split}
\label{a2}
&\int_{{\sn}}h^{1-p}\phi^{p}|\nabla  U(F(\xi)|^{q}\det(h_{ij}(\xi)+h(\xi)\delta_{ij}){d}{\xi}\\
&=\int_{\partial \Omega}\frac{\psi^{p}}{|\nabla h^{*}|^{1-p}}|\nabla U|^{q}d \mathcal{H}^{n-1}(X),
\end{split}
\end{equation}
\begin{equation}
\begin{split}
\label{a22}
&\int_{{\sn}}h^{1-2p}\phi^{2p}|\nabla  U(F(\xi))|^{q}\det(h_{ij}(\xi)+h(\xi)\delta_{ij}){d}{\xi}\\
&=\int_{\partial \Omega}\frac{\psi^{2p}}{|\nabla h^{*}|^{1-2p}}|\nabla U|^{q}d \mathcal{H}^{n-1}(X),
\end{split}
\end{equation}
and
\begin{equation}
\begin{split}
\label{a3}
&\int_{\sn}h^{1-p}\phi^{p}|\nabla U(F(\xi))|^{q-1}\det(h_{ij}(\xi)+h(\xi)\delta_{ij})\nabla \dot{U}(F(\xi)))\cdot \xi d \xi\\
&= \int_{\partial \Omega }\frac{\psi^{p}}{|\nabla h^{*}|^{1-p}}(|\nabla U|^{q-1}\nabla \dot{U}\cdot \nu_{\Omega}) d \mathcal{H}^{n-1}(X).
\end{split}
\end{equation}
Since
\begin{equation}\label{aab}
c_{ij}=\det (h_{ij}+h\delta_{ij})(h_{ij}+h\delta_{ij})^{-1}.
\end{equation}
From \eqref{aab}, we get
\begin{equation}\label{a4}
\int_{\sn}{\rm tr}(c_{ij}(\xi))|\nabla U(F(\xi))|^{q}h^{2-2p}\phi^{2p}(\xi)d\xi=\int_{\partial \Omega}{\rm tr}(\Pi_{\partial\Omega})|\nabla U(X)|^{q}\frac{\psi^{2p}}{|\nabla h^{*}|^{2-2p}}d \mathcal{H}^{n-1}(X).
\end{equation}
Furthermore,
\begin{equation*}
\begin{split}
\label{a5}
&\sum_{i,j}c_{ij}(\xi)|\nabla U(F(\xi))|^{q}(h^{1-p}\phi^{p})_{i}(\xi)(h^{1-p}\phi^{p})_{j}(\xi)\\
&=\det(h_{ij}(\xi)+h(\xi)\delta_{ij})|\nabla U(F(\xi))|^{q}D(\nu^{-1}_{\Omega}(\xi))\nabla_{\partial \Omega} \left( \frac{\psi(\nu^{-1}_{\Omega}(\xi))^{p}}{|\nabla h^{*}(\nu^{-1}_{\Omega}(\xi))|^{1-p}}\right)\cdot \nabla_{\partial \Omega} \left( \frac{\psi(\nu^{-1}_{\Omega}(\xi))^{p}}{|\nabla h^{*}(\nu^{-1}_{\Omega}(\xi))|^{1-p}}\right).
\end{split}
\end{equation*}
Thus
\begin{equation}
\begin{split}
\label{a5}
&\int_{\sn}\sum_{i,j}c_{ij}(\xi)|\nabla U(F(\xi))|^{q}(h^{1-p}\phi^{p})_{i}(\xi)(h^{1-p}\phi^{p})_{j}(\xi)d\xi\\
&=\int_{\partial \Omega}|\nabla U(X)|^{q}\Pi^{-1}_{\partial\Omega}\nabla_{\partial \Omega} \left( \frac{\psi^{p}}{|\nabla h^{*}|^{1-p}}\right)\cdot \nabla_{\partial \Omega} \left( \frac{\psi^{p}}{|\nabla h^{*}|^{1-p}}\right)d \mathcal{H}^{n-1}(X).
\end{split}
\end{equation}
Now substituting \eqref{a2},\eqref{a22}, \eqref{a3}, \eqref{a4} and \eqref{a5} into Theorem \ref{TP2p},  we get Theorem \ref{main1} in the case $\psi^{p} \in C^{\infty}(\partial \Omega)$. The proof is completed.

\end{proof}
As a consequence of Theorem \ref{main1}, we obtain:
\begin{coro}
Suppose $1<q<n$. Let $\Omega \in \koo$ be of class $C^{\infty}_{+}$ and $U$ be the solution of \eqref{capequ*} in $\rnnn \backslash \bar{\Omega}$. For every function $\psi\in C^{\infty}(\partial \Omega)$,  if
\[
\int_{\partial \Omega}\psi|\nabla U|^{q}d \mathcal{H}^{n-1}(X)=0,
\]
 then we have
\begin{equation*}
\begin{split}
\label{aq}
&\quad -q(q-1) \int_{\partial \Omega }\psi(\nabla \dot{U}(X)\cdot \nu_{\Omega})|\nabla U|^{q-1}d \mathcal{H}^{n-1}(X)-\int_{\partial \Omega}{\rm tr}(\Pi_{\partial \Omega})\psi^{2}|\nabla U|^{q}d \mathcal{H}^{n-1}(X)\\
&\leq (q-1)\int_{\partial \Omega}(\Pi^{-1}_{\partial \Omega}\nabla_{\partial \Omega} \psi \cdot \nabla_{\partial \Omega} \psi)|\nabla U|^{q}d \mathcal{H}^{n-1}(X),
\end{split}
\end{equation*}
where  $\dot{U}$ satisfies the following equation
\begin{equation*}
\label{Ut}
\left\{
\begin{array}{lr}
\nabla\cdot ((q-2)|\nabla U|^{q-4}(\nabla U \cdot  \nabla \dot{U})\nabla U+|\nabla U|^{q-2}\nabla \dot{U})=0, & X\in {\rnnn} \backslash \bar{\Omega}, \\
\dot{U}(X)=|\nabla U(X)|\psi,   & X\in  \partial \Omega,\\
\lim_{|X|\rightarrow \infty}\dot{U }(X)=0.
\end{array}\right.
\end{equation*}
\end{coro}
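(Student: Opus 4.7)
The plan is to obtain this corollary as a direct specialization of Theorem \ref{main1}. Namely, I would set $p=1$ in the inequality \eqref{aq90}, which is the infinitesimal form of the $L_{1}$-Brunn-Minkowski inequality for $C_{q}$ (i.e., the classical Brunn-Minkowski inequality for $C_{q}$ due to Colesanti--Salani). This is the smallest admissible value of $p$ covered by Theorem \ref{main1}, and it is the one for which the algebraic coefficients collapse most conveniently: the factor $|\nabla h^{*}|^{1-p}$ disappears, and the term
\[
(1-p)(q-1)\int_{\partial\Omega}\frac{\psi^{2p}}{|\nabla h^{*}|^{1-2p}}|\nabla U|^{q}\,d\mathcal{H}^{n-1}
\]
drops out entirely because of the prefactor $(1-p)$. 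Thus the cumbersome $L_{p}$-weighted integrals reduce to the clean boundary integrals appearing in the statement.

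Next, I would impose the orthogonality hypothesis $\int_{\partial\Omega}\psi|\nabla U|^{q}\,d\mathcal{H}^{n-1}=0$ to annihilate the remaining quadratic obstruction. At $p=1$, the leading term in \eqref{aq90} is
\[
\frac{1+q-n}{(n-q)\,C_{q}(\Omega)}\left(\int_{\partial\Omega}\psi|\nabla U|^{q}\,d\mathcal{H}^{n-1}\right)^{2},
\]
which vanishes under the assumption on $\psi$. Once both the $(1-p)$-term and the quadratic term have been removed, what remains of \eqref{aq90} is exactly
\[
-q(q-1)\int_{\partial\Omega}\psi(\nabla\dot U\cdot\nu_{\Omega})|\nabla U|^{q-1}\,d\mathcal{H}^{n-1}-\int_{\partial\Omega}\mathrm{tr}(\Pi_{\partial\Omega})\psi^{2}|\nabla U|^{q}\,d\mathcal{H}^{n-1}\le(q-1)\int_{\partial\Omega}(\Pi^{-1}_{\partial\Omega}\nabla_{\partial\Omega}\psi\cdot\nabla_{\partial\Omega}\psi)|\nabla U|^{q}\,d\mathcal{H}^{n-1},
\]
which is the desired conclusion. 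The function $\dot U$ appearing here is governed by the boundary-value problem obtained from the one in Theorem \ref{main1} by setting $p=1$, so that the Dirichlet data become $\dot U(X)=\psi(X)|\nabla U(X)|$ on $\partial\Omega$, matching the corollary's statement.

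The proof is therefore almost entirely bookkeeping: the only thing to check is that the hypotheses of Theorem \ref{main1} are still satisfied at $p=1$ (which is the boundary of the admissible range there) and that every $\psi\in C^{\infty}(\partial\Omega)$ is legitimate test data, i.e., that one can extend $\psi$ to a support-function perturbation of the form $(h+t\phi)$ with $\phi(\xi)=\psi(F(\xi))\in C^{\infty}(\sn)$ so that $h_{t}\in\mathcal{S}$ for small $t$. Since $h\in\mathcal{S}$ is strictly positive and $\mathcal{S}$ is open in $C^{\infty}(\sn)$, this is automatic for $|t|$ small, so no additional regularity or positivity assumption on $\psi$ is needed. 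The main conceptual obstacle, already resolved in Theorem \ref{main1} via Lemmas \ref{UPO}--\ref{fsd} together with Lemma \ref{LQ7}, is the self-adjointness of the linearized operator $\mathcal{L}^{t}$ that makes the Hadamard-type computation integrate by parts cleanly; here one simply inherits that machinery.
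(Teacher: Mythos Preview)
Your proposal is correct and follows exactly the approach the paper intends: the corollary is stated in the paper simply as a consequence of Theorem~\ref{main1}, and your specialization $p=1$ together with the orthogonality hypothesis is precisely how one extracts it. There is nothing further to add.
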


\subsection{Torsion}

Our second aim is to give the Poincar\'{e}-type inequality derived by the $L^p$-Brunn-Minkowski inequality for torsion with $p\geq 1$.

Recall the $L_{p}$-Brunn-Minkowski inequality for $T$ \eqref{LPQI}. We obtain the following result along similar lines of proof as in Lemma \ref{LQ7}.
\begin{lem}\label{LQ7d}
\eqref{LPQI} implies that for every one-parameter family $\Pi(h,\phi,I)$ as defined in \eqref{phic}, with $h$, $\phi$ and $\Omega_{t}\in \Pi(h,\phi,I)$,
\begin{equation}\label{lpq7}
\frac{d^{2}}{dt^{2}}[T(\Omega_{t})]\Bigg|_{t=0}T(\Omega_{0})\leq \frac{n+2-p}{n+2}\left( \frac{d}{dt}[T(\Omega_{t})]\Bigg|_{t=0}\right)^{2}.
\end{equation}
\end{lem}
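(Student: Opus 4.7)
The plan is to adapt the argument proving Lemma~\ref{LQ7} \emph{verbatim}, substituting $C_q$ by $T$ and the exponent $1/(n-q)$ by $1/(n+2)$. First I would fix a small $t_0 \in (0,\varepsilon]$ and form the $L_p$-combination $\Omega_s^{(p)} := s \cdot \Omega_{t_0} +_p (1-s) \cdot \Omega$ for $s \in [0,1]$. Its support function is $(s\, h_{t_0}^p + (1-s)\, h^p)^{1/p}$, and since $h_{t_0}^p = h^p + t_0 \phi^p$, this collapses to $(h^p + s t_0 \phi^p)^{1/p} = h_{s t_0}$. Thus the $L_p$-sum along $s$ simply reparametrizes the original family $\Omega_t$ via $t = s t_0$, and in particular $\Omega_s^{(p)} \in \Pi(h,\phi,I)$ for every $s \in [0,1]$, so everything stays inside $\mathcal{K}_0^n$ of class $C^\infty_+$.

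Next, I would apply the $L_p$-Brunn-Minkowski inequality \eqref{LPQI} to arbitrary pairs along this combination. A direct check of support functions shows $\lambda \cdot \Omega_{s_1}^{(p)} +_p (1-\lambda) \cdot \Omega_{s_2}^{(p)} = \Omega_{\lambda s_1 + (1-\lambda) s_2}^{(p)}$, so \eqref{LPQI} implies that
\[
\Upsilon(s) := T(\Omega_s^{(p)})^{p/(n+2)} = g_p(s t_0)^{p/(n+2)}, \qquad s \in [0,1],
\]
is concave, where $g_p(t) := T(h_t)$ in the notation of Lemma~\ref{fsd5}. In particular $\Upsilon''(0) \leq 0$.

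Finally, I would compute this second derivative via the chain rule. Writing $\alpha := p/(n+2)$, a direct differentiation gives
\[
\Upsilon''(0) = \alpha\, t_0^2\, g_p(0)^{\alpha-2}\bigl[(\alpha-1)(g_p'(0))^2 + g_p(0)\, g_p''(0)\bigr] \leq 0.
\]
Since $g_p(0) = T(\Omega_0) > 0$ and $\alpha - 1 = (p - n - 2)/(n+2)$, dividing through and rearranging yields exactly \eqref{lpq7}. I do not anticipate any real obstacle: the only point requiring attention is that the $L_p$-sum remains in the admissible family $\Pi(h,\phi,I)$ as $s$ sweeps $[0,1]$, but this is automatic from the identification $\Omega_s^{(p)} = \Omega_{s t_0}$ established in the first step.
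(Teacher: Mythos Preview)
Your proposal is correct and follows essentially the same route as the paper, which simply says the lemma is obtained ``along similar lines of proof as in Lemma~\ref{LQ7}.'' If anything, your version is slightly more careful: you explicitly verify the identification $\Omega_s^{(p)}=\Omega_{st_0}$ and deduce genuine concavity of $s\mapsto T(\Omega_{st_0})^{p/(n+2)}$ on $[0,1]$ by applying \eqref{LPQI} to arbitrary intermediate pairs, whereas the paper's proof of Lemma~\ref{LQ7} just writes the single inequality against the endpoints and asserts $\Upsilon''(0)\le 0$.
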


Now, we give the infinitesimal form of the $L_{p}$-Brunn-Minkowski inequality for torsion as follows.
\begin{theo}\label{TP2}
 Let $\Omega \in \koo$ be of class $C^{\infty}_{+}$ and $U$ be the solution of \eqref{Tor2} in $\Omega$.  Let $p\geq 1$. Then for every function $\phi^{p}\in C^{\infty}(\sn)$, we have
\begin{equation}
\begin{split}
\label{aq6w}
&\frac{(p-n-2)}{(n+2)T(\Omega)}\left(\int_{\sn}\phi^{p}h^{1-p}d \mu^{tor}\right)^{2}+ (1-p)\int_{\sn}\phi^{2p}h^{1-2p}d \mu^{tor}\\
&\quad -2 \int_{\sn }\phi^{2p}h^{2-2p}\frac{(\nabla \dot{U}(F(\xi))\cdot \xi)}{\dot{U}(F(\xi))}d \mu^{tor}-\int_{\sn}{\rm tr}((Q^{-1}(h))\phi^{2p}h^{2-2p}d \mu^{tor}\\
&\quad +\frac{2}{p}\int_{\sn}h^{3-3p}\phi^{3p}\frac{1}{\dot{U}(F(\xi))}d \mu^{tor} \leq \int_{\sn}Q^{-1}(h)\nabla_{\sn} \left( \phi^{p}h^{1-p}\right)\cdot \nabla_{\sn} \left(\phi^{p}h^{1-p}\right)d \mu^{tor},
\end{split}
\end{equation}
where $\dot{U}$ satisfies the following equation
\begin{equation*}
\label{Ut}
\left\{
\begin{array}{lr}
\Delta \dot{U}=0, & X\in  \Omega, \\
\dot{U}(X)=\frac{1}{p}\phi^{p}(\nu_{\Omega}(X))h^{1-p}(\nu_{\Omega}(X))|\nabla U(X)|,   & X\in  \partial \Omega.
\end{array}\right.
\end{equation*}
\end{theo}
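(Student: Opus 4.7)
The plan is to mirror the argument used for Theorem \ref{TP2p}, with the variational input coming from Lemmas \ref{LQ7d} and \ref{fsd5} in place of Lemmas \ref{LQ7} and \ref{fsd}, and with a different scaling exponent $p/(n+2)$ that accounts for the homogeneity of $T$.

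Step 1 (concavity $\Rightarrow$ infinitesimal inequality). Fix $\varepsilon>0$ so that $h_t := (h^p+t\phi^p)^{1/p}\in\mathcal{S}$ for $t\in(-\varepsilon,\varepsilon)$, and set $g_p(t) = T(h_t)$ and $\Upsilon_p(t) = g_p(t)^{p/(n+2)}$. Lemma \ref{LQ7d} gives $\Upsilon_p''(0)\leq 0$, which by direct differentiation and a nonnegative multiplication becomes
\[
\frac{p-n-2}{(n+2)T(\Omega)}\bigl(g_p'(0)\bigr)^2 + p^2 g_p''(0) \leq 0.
\]

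Step 2 (substitute the variational formulas). Plug in the expressions from Lemma \ref{fsd5}. The quantity $\bigl(g_p'(0)\bigr)^2$ becomes $\tfrac{1}{p^2}\bigl(\int_{\sn}\phi^p h^{1-p}|\nabla U(F)|^2\kappa^{-1}d\xi\bigr)^2$, and $p^2 g_p''(0)$ becomes the sum of five Lebesgue integrals on $\sn$.

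Step 3 (rewrite in terms of $\mu^{tor}$ and $Q^{-1}(h)$). Using $\kappa^{-1}d\xi = \det(h_{ij}+h\delta_{ij})d\xi$ and $\frac{|\nabla U(F)|^2}{\kappa}d\xi = d\mu^{tor}$, the first and third terms of $p^2 g_p''(0)$ become $(1-p)\int h^{1-2p}\phi^{2p}d\mu^{tor}$ and $-\int\mathrm{tr}(Q^{-1}(h))h^{2-2p}\phi^{2p}d\mu^{tor}$, after noting that the cofactor identity $c_{ij}=\kappa^{-1}(Q^{-1})_{ij}$ converts $\sum_i c_{ii}$ into $\kappa^{-1}\mathrm{tr}(Q^{-1}(h))$. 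For the $\nabla\dot{U}$-term and the lone $|\nabla U|$-term, apply the Dirichlet boundary condition $\dot{U}(F(\xi)) = \tfrac{1}{p}\phi^p h^{1-p}|\nabla U(F(\xi))|$ from Lemma \ref{arv} to eliminate $|\nabla U|$ in favor of $\dot{U}$; one checks that
\[
-\tfrac{2p}{p^2}\!\int h^{1-p}\phi^p \tfrac{|\nabla U|}{\kappa}(\nabla\dot{U}\cdot\xi)d\xi \;=\; -2\!\int h^{2-2p}\phi^{2p}\tfrac{\nabla\dot{U}\cdot\xi}{\dot{U}}d\mu^{tor},
\]
\[
\tfrac{2}{p^2}\!\int h^{2-2p}\phi^{2p}\tfrac{|\nabla U|}{\kappa}d\xi \;=\; \tfrac{2}{p}\!\int \tfrac{h^{3-3p}\phi^{3p}}{\dot{U}}d\mu^{tor}.
\]

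Step 4 (integration by parts on $\sn$). The remaining term $\int\sum_{i,j}h^{1-p}\phi^p\bigl(c_{ij}|\nabla U|^2(h^{1-p}\phi^p)_i\bigr)_j d\xi$ is handled by integrating by parts on the closed manifold $\sn$ and using the Minkowski-type identity $\sum_j (c_{ij})_{;j}=0$; this produces $-\int Q^{-1}(h)\nabla_{\sn}(\phi^p h^{1-p})\cdot\nabla_{\sn}(\phi^p h^{1-p})d\mu^{tor}$, which, moved to the right-hand side, yields the quadratic form on the right of \eqref{aq6w}. Assembling the pieces and rearranging completes the proof.

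The bookkeeping in Step 3 is the principal obstacle: the presence of a first-power factor $|\nabla U|$ (as opposed to $|\nabla U|^2$) means we cannot directly absorb it into $d\mu^{tor}$, and one must invoke the explicit Dirichlet value of $\dot{U}$ to trade $|\nabla U|$ for $\phi^p h^{1-p}/\dot{U}$. The integration-by-parts step relies on the smoothness assumption $\Omega\in C^\infty_+$ together with the Minkowski identity for the cofactor matrix of $Q(h)$; both ensure that the boundary contribution on the closed manifold $\sn$ vanishes identically, yielding the stated nonnegative Dirichlet quadratic form.
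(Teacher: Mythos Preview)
Your approach is correct and essentially identical to the paper's own proof: concavity of $T(h_t)^{p/(n+2)}$ at $t=0$ via Lemma \ref{LQ7d}, substitution of the first and second variation formulas from Lemma \ref{fsd5}, integration by parts on $\sn$ using $\sum_j (c_{ij})_{;j}=0$, and conversion to $d\mu^{tor}$ through the cofactor identity $c_{ij}=\kappa^{-1}(Q^{-1}(h))_{ij}$; the paper carries out exactly these four moves in the same order. One bookkeeping slip to fix: the displayed inequality in your Step~1 should read $\tfrac{p^2(p-n-2)}{(n+2)T(\Omega)}(g_p'(0))^2+p^2 g_p''(0)\le 0$ (the extra $p^2$ is what cancels the $1/p^2$ coming from $g_p'(0)=p^{-1}\!\int\phi^p h^{1-p}\,d\mu^{tor}$), and correspondingly the left-hand coefficients in your Step~3 displays should be $-2p$ and $2$ rather than $-\tfrac{2p}{p^2}$ and $\tfrac{2}{p^2}$; with those corrections the arithmetic closes exactly as in the paper.
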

\begin{proof}

 Assume first that $\phi^{p}\in C^{\infty}(\sn)$. Let $\varepsilon>0$ be such that $(h^{p}+t\phi^{p})^{1/p}\in \mathcal{S}$ for every $t\in (-\varepsilon,\varepsilon)$. Set  $g_{p}(t)=T((h^p+t\phi^p)^{1/p})$ and $Q_{p}(t)=g_{p}(t)^{p/(n+2)}$ for $t\in (-\varepsilon,\varepsilon)$. Recall Lemma \ref{LQ7d}, we know that $Q_{p}$ is concave at $t=0$, then we get
\begin{equation}\label{98a}
Q^{''}_{p}(0)=\frac{p(p-n-2)}{(n+2)^{2}}g_{p}(0)^{\frac{p}{n+2}-2}(g^{'}_{p}(0))^{2}+\frac{p}{n+2}g_{p}(0)^{\frac{p}{n+2}-1}g^{''}_{p}(0)\leq 0.
\end{equation}
Substituting \eqref{cz1} and \eqref{cz2} into \eqref{98a}, thus
\begin{equation}
\begin{split}
\label{ertsw}
&\frac{(p-n-2)}{(n+2)T(\Omega)}\left(\int_{{\sn}}h^{1-p}\phi^{p}|\nabla  U(F(\xi))|^{2}\det(h_{ij}(\xi)+h(\xi)\delta_{ij}){d}{\xi}\right)^{2}\\
&\quad + (1-p)\int_{\sn}h^{1-2p}\phi^{2p}|\nabla U(F(\xi))|^{2}\det(h_{ij}(\xi)+h(\xi)\delta_{ij}){d}{\xi}\\
&\quad -2p\int_{\sn}h^{1-p}\phi^{p}\frac{|\nabla U(F(\xi))|}{\kappa} \nabla \dot{U}(F(\xi))\cdot \xi d \xi\\
&\quad+2\int_{\sn}h^{2-2p}\phi^{2p}|\nabla U(F(\xi))|\det(h_{ij}(\xi)+h(\xi)\delta_{ij}){d}{\xi} \\
&\quad -\int_{\sn}{\rm tr}(c_{ij})|\nabla U(F(\xi))|^{2}h^{2-2p}\phi^{2p}d \xi\\
&\leq -\int_{\sn}\sum_{i,j}h^{1-p}\phi^{p}(c_{ij}|\nabla U(F(\xi))|^{2}(h^{1-p}\phi^p)_{i})_{j}d \xi.
\end{split}
\end{equation}
Now, with an integration by parts, we have
\begin{equation}
\begin{split}
\label{a16w}
&\frac{(p-n-2)}{(n+2)T(\Omega)}\left(\int_{{\sn}}h^{1-p}\phi^{p}|\nabla  U(F(\xi))|^{2}\det(h_{ij}(\xi)+h(\xi)\delta_{ij}){d}{\xi}\right)^{2}\\
&\quad + (1-p)\int_{\sn}h^{1-2p}\phi^{2p}|\nabla U(F(\xi))|^{2}\det(h_{ij}(\xi)+h(\xi)\delta_{ij}){d}{\xi}\\
&\quad -2p\int_{\sn}h^{1-p}\phi^{p}\frac{|\nabla U(F(\xi))|}{\kappa} \nabla \dot{U}(F(\xi))\cdot \xi d \xi\\
&\quad+2\int_{\sn}h^{2-2p}\phi^{2p}|\nabla U(F(\xi))|\det(h_{ij}(\xi)+h(\xi)\delta_{ij}){d}{\xi} \\
&\quad -\int_{\sn}{\rm tr}(c_{ij})|\nabla U(F(\xi))|^{2}h^{2-2p}\phi^{2p}d \xi\\
&\leq \int_{\sn}\sum_{i,j}(h^{1-p}\phi^{p})_{j}(c_{ij}|\nabla U(F(\xi))|^{2}(h^{1-p}\phi^p)_{i})d \xi.
\end{split}
\end{equation}
Note that
\begin{equation}\label{cijh}
c_{ij}=\det(h_{ij}+h\delta_{ij})(h_{ij}+h\delta_{ij})^{-1}.
\end{equation}
Then \eqref{aq6w} holds by substituting \eqref{cijh} and the definition of $\mu^{tor}$ into \eqref{a16w}. The proof is completed.

\end{proof}
 We obtain the following result via Theorem \ref{TP2} and along similar lines of proof in proving Theorem \ref{main1}.
\begin{theo}\label{main2}

Let $\Omega \in \koo$  be of class $C^{\infty}_{+}$ and $U$ be the solution of \eqref{Tor2} in $\Omega$. Suppose $1\leq p < \infty$. For every function $\psi^{p}\in C^{\infty}(\partial \Omega)$, then we obtain
\begin{equation}
\begin{split}
\label{aq9}
&\frac{(p-n-2)}{(n+2)T(\Omega)}\left(\int_{\partial \Omega}\frac{\psi^{p}}{|\nabla h^{*}|^{1-p}}|\nabla U|^{2}d \mathcal{H}^{n-1}(X)\right)^{2}\\
&\quad + (1-p)\int_{\partial \Omega}\frac{\psi^{2p}}{|\nabla h^{*}|^{1-2p}}|\nabla U|^{2}d \mathcal{H}^{n-1}(X)\\
&\quad -2p \int_{\partial \Omega }\frac{\psi^{p}}{|\nabla h^{*}|^{1-p}}(\nabla \dot{U}(X)\cdot \nu_{\Omega})|\nabla U|d \mathcal{H}^{n-1}(X)\\
&\quad+2\int_{\partial \Omega} \frac{\psi^{2p}}{|\nabla h^{*}|^{2-2p}}|\nabla U|d \mathcal{H}^{n-1}(X) -\int_{\partial \Omega}{\rm tr}(\Pi_{\partial\Omega})\frac{\psi^{2p}}{|\nabla h^{*}|^{2-2p}}|\nabla U|^{2}d \mathcal{H}^{n-1}(X)\\
&\leq \int_{\partial \Omega}(\Pi^{-1}_{\partial \Omega})\nabla_{\partial \Omega} \left( \frac{\psi^{p}}{|\nabla h^{*}|^{1-p}}\right)\cdot \nabla_{\partial \Omega} \left( \frac{\psi^{p}}{|\nabla h^{*}|^{1-p}}\right)|\nabla U|^{2}d \mathcal{H}^{n-1}(X),
\end{split}
\end{equation}
where $\dot{U}$ satisfies the following equation
\begin{equation*}
\label{Ut}
\left\{
\begin{array}{lr}
\Delta \dot{U}=0, & X\in \Omega, \\
\dot{U}(X)=\frac{\psi^{p}}{p|\nabla h^{*}|^{1-p}}|\nabla U(X)|,   & X\in  \partial \Omega.
\end{array}\right.
\end{equation*}
\end{theo}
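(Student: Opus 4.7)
The proof of Theorem \ref{main2} is a direct translation of Theorem \ref{TP2} from the unit sphere $\sn$ to the boundary $\partial\Omega$ via the Gauss map, and runs in complete parallel to the proof of Theorem \ref{main1}. The plan is as follows. Given $\psi^{p}\in C^{\infty}(\partial\Omega)$, I set $\phi(\xi)=\psi(F(\xi))$ for $\xi\in\sn$, where $F=\nu_{\Omega}^{-1}$. Since $\Omega$ is of class $C^{\infty}_{+}$, the Gauss map $\nu_{\Omega}$ is a smooth diffeomorphism between $\partial\Omega$ and $\sn$, so $\phi^{p}\in C^{\infty}(\sn)$, and I may apply Theorem \ref{TP2} with this $\phi$.

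The next step is to transfer each of the five integrals in \eqref{aq6w} to $\partial\Omega$ using the two basic identities
\begin{equation*}
\int_{\sn}f(\xi)\,dS(\Omega,\xi)=\int_{\partial\Omega}f(\nu_{\Omega}(X))\,d\mathcal{H}^{n-1}(X),\qquad h(\nu_{\Omega}(X))=\frac{1}{|\nabla h^{*}(X)|},
\end{equation*}
combined with the representation $d\mu^{tor}(\Omega,\xi)=|\nabla U(F(\xi))|^{2}dS(\Omega,\xi)$. The two integrals of the form $\int_{\sn}h^{1-kp}\phi^{kp}\,d\mu^{tor}$ (for $k=1,2$) transform immediately into $\int_{\partial\Omega}\psi^{kp}|\nabla h^{*}|^{kp-1}|\nabla U|^{2}d\mathcal{H}^{n-1}$, producing the first two terms on the left-hand side of \eqref{aq9}. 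The $\nabla\dot{U}\cdot\xi$ integral becomes $\int_{\partial\Omega}(\cdots)(\nabla\dot{U}\cdot\nu_{\Omega})|\nabla U|\,d\mathcal{H}^{n-1}$, because $\xi=\nu_{\Omega}(X)$ under the change of variables. For the unusual term $\frac{2}{p}\int_{\sn}h^{3-3p}\phi^{3p}\dot{U}(F(\xi))^{-1}d\mu^{tor}$, I substitute the boundary datum $\dot{U}(F(\xi))=\tfrac{1}{p}\phi^{p}h^{1-p}|\nabla U|$, which cancels a factor of $\tfrac{1}{p}\phi^{p}h^{1-p}|\nabla U|$ and yields exactly $2\int_{\partial\Omega}\psi^{2p}|\nabla h^{*}|^{2p-2}|\nabla U|\,d\mathcal{H}^{n-1}$.

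The remaining two ingredients are geometric. Since the cofactor matrix satisfies $c_{ij}=\det(Q)\,Q^{-1}$ with $Q_{ij}=h_{ij}+h\delta_{ij}$, and $Q$ is the reverse Weingarten map, i.e. $Q=D\nu_{\Omega}^{-1}$ on $\sn$, its inverse $Q^{-1}$ is identified with the second fundamental form $\Pi_{\partial\Omega}$ pushed back to $\sn$ (recall $\Pi_{\partial\Omega}=D\nu_{\Omega}$). Hence $\operatorname{tr}(Q^{-1}(h))\,d\mu^{tor}$ becomes $\operatorname{tr}(\Pi_{\partial\Omega})|\nabla U|^{2}d\mathcal{H}^{n-1}$ after the change of variables, and the quadratic form $Q^{-1}\nabla_{\sn}(\phi^{p}h^{1-p})\cdot\nabla_{\sn}(\phi^{p}h^{1-p})$ transforms into $\Pi_{\partial\Omega}^{-1}\nabla_{\partial\Omega}(\psi^{p}|\nabla h^{*}|^{p-1})\cdot\nabla_{\partial\Omega}(\psi^{p}|\nabla h^{*}|^{p-1})$. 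Assembling all the pieces into \eqref{aq6w} gives \eqref{aq9}.

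The main technical subtlety—and the only step that is not a bookkeeping exercise—is the translation of the spherical gradient term into the intrinsic Levi-Civita gradient on $\partial\Omega$. One must verify that the composition $F^{*}\nabla_{\partial\Omega}=\nabla_{\sn}\circ F^{*}$ intertwines with the action of $Q^{-1}$ in the precise form above. This is, however, the same calculation that underlies the derivation of Theorem \ref{main1} from Theorem \ref{TP2p}; I would invoke it verbatim rather than reprove it, citing the treatment in that proof.
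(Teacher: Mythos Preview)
Your proposal is correct and follows essentially the same approach as the paper, which merely states that Theorem \ref{main2} is obtained ``via Theorem \ref{TP2} and along similar lines of proof in proving Theorem \ref{main1}.'' Your write-up spells out precisely those details---pulling back $\psi$ to $\phi=\psi\circ F$, applying the change of variables $\xi=\nu_{\Omega}(X)$ with $h(\nu_{\Omega}(X))=|\nabla h^{*}|^{-1}$, substituting the boundary datum for $\dot{U}$ to simplify the two terms containing $\dot{U}$, and identifying $Q^{-1}$ with $\Pi_{\partial\Omega}$---and matches the paper's intended argument term by term.
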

\begin{rem}
Notice that if $\Omega=B^{n}$ in \eqref{aq9},  when $\psi(\xi)^{p}=\xi \cdot v_{0}$ for a fixed vector $v_{0}$ in $\rnnn$ and $\xi\in B^{n}$, in such case, by the maximum principle, it follows that $\dot{U}=\frac{\psi^p}{pn}$ in $B^{n}$,  then \eqref{aq9} always holds with $1\leq p<\infty$ (when $p=1$, equality holds). Remark also that a class of Poincar\'{e}-type inequalities for torsion derived by the related Brunn-Minkowski inequality were previously studied in \cite{FH24}. In addition, we can use Theorem \ref{main2} to give a new proof on \eqref{LPQI} in small enough $C^{\infty}$-perturbations of unit ball (see Sec. \ref{Sec5}).
\end{rem}

\subsection{The first eigenvalue}

We establish a Poincar\'{e} type inequality coming from the Brunn-Minkowski inequality for the first eigenvalue.

Recall the Brunn-Minkowski inequality for $\lambda$ \eqref{Cta}. With a proof similar to that of Lemma \ref{LQ7}, the following result holds.
\begin{lem}\label{LQsg}
\eqref{Cta} implies that for every one-parameter family $\Pi(h,\phi,I)$ as given in \eqref{phic}, with $h$, $\phi$, $\Omega_{t}\in \Pi(h,\phi,I)$ and $p=1$,
\begin{equation}\label{lp6}
\frac{d^{2}}{dt^{2}}[\lambda(\Omega_{t})]\Bigg|_{t=0}\lambda(\Omega_{0})\geq  \frac{3}{2}\left( \frac{d}{dt}[\lambda(\Omega_{t})]\Bigg|_{t=0}\right)^{2}.
\end{equation}

\end{lem}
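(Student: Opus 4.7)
The plan is to mirror the derivation of Lemmas \ref{LQ7} and \ref{LQ7d}, with two modifications: here the Brunn-Minkowski inequality \eqref{Cta} asserts concavity of $\lambda^{-1/2}$ rather than of a positive power of the functional, and the family $\Pi(h,\phi,I)$ must be taken with exponent $p=1$ so that Minkowski addition of support functions is genuinely linear (which is precisely the Minkowski combination that appears in \eqref{Cta}).

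First, I would fix $h \in \mathcal{S}$, $\phi \in C^{\infty}(\sn)$ and $\varepsilon>0$ as in the definition of $\Pi(h,\phi,I)$ with $p=1$, so that $h_t = h + t\phi$ is the support function of $\Omega_t$ for every $t\in[-\varepsilon,\varepsilon]$. Choose any $t_0 \in (0,\varepsilon]$. Because the support function is additive under Minkowski addition, the body $(1-s)\Omega + s\Omega_{t_0}$ has support function
\[
(1-s)h + s(h + t_0\phi) = h + st_0\phi = h_{st_0},
\]
so it coincides with $\Omega_{st_0}$ for every $s\in[0,1]$. Applying \eqref{Cta} then yields that
\[
\lambda(\Omega_{st_0})^{-1/2} \geq (1-s)\lambda(\Omega)^{-1/2} + s\,\lambda(\Omega_{t_0})^{-1/2}, \qquad s\in[0,1],
\]
so the function $s \mapsto \lambda(\Omega_{st_0})^{-1/2}$ is concave on $[0,1]$; in particular its second derivative at $s=0$ is non-positive.

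Writing $L(u) := \lambda(\Omega_u)$ and applying the chain rule, a direct differentiation gives
\[
\frac{d^2}{ds^2}\,L(st_0)^{-1/2}\bigg|_{s=0} = \tfrac{3}{4}\,t_0^2\,L(0)^{-5/2}\,L'(0)^2 \;-\; \tfrac{1}{2}\,t_0^2\,L(0)^{-3/2}\,L''(0) \;\leq\; 0.
\]
Since $L(0) = \lambda(\Omega) > 0$ and $t_0 > 0$, multiplying through by $2\,L(0)^{5/2}/t_0^2$ rearranges this to
\[
L(0)\,L''(0) \;\geq\; \tfrac{3}{2}\,L'(0)^2,
\]
which is precisely \eqref{lp6}.

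The only point requiring justification is that $L(t) = \lambda(\Omega_t)$ is genuinely twice differentiable at $t=0$, so that the one-sided concavity-based inequality on $s\mapsto L(st_0)^{-1/2}$ really does produce the pointwise relation on $L', L''$. This smoothness is already provided by Lemmas \ref{ars} and \ref{fsdq} (specialized to $p=1$), which furnish explicit formulas for both $L'(0)$ and $L''(0)$; beyond this, the argument is purely Brunn-Minkowski-plus-chain-rule and presents no essential obstacle.
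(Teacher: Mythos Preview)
Your proof is correct and follows essentially the same approach as the paper, which simply states that the argument is analogous to Lemma~\ref{LQ7}: one uses \eqref{Cta} to deduce concavity of $s\mapsto \lambda((1-s)\Omega+s\Omega_{t_0})^{-1/2}$ along the family $\Omega_{st_0}$, and then reads off the sign of the second derivative at $s=0$ via the chain rule. Your added remark about the twice differentiability of $t\mapsto\lambda(\Omega_t)$ (via Lemmas~\ref{ars} and~\ref{fsdq}) is a welcome clarification that the paper leaves implicit.
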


The related Poincar\'{e} type inequality's form is as follows.
\begin{theo}\label{TP36}
 Let $\Omega \in \koo$ be of class $C^{\infty}_{+}$ and $U$ be the solution of \eqref{MA} in $\Omega$. Then for every function $\phi\in C^{\infty}(\sn)$, we have
\begin{equation}
\begin{split}
\label{aq66}
&\frac{3}{\lambda(\Omega)}\left(\int_{\sn}\phi d \mu^{eig}\right)^{2}-2 \int_{\sn }\phi^{2}\frac{(\nabla \dot{U}(F(\xi))\cdot \xi)}{\dot{U}(F(\xi))}d \mu^{eig}-\int_{\sn}{\rm tr}((Q^{-1}(h))\phi^{2}d \mu^{eig}\\
&\quad +2\int_{\sn}\lambda U\phi^{3}\frac{1}{\dot{U}(F(\xi))}d \mu^{eig} \leq \int_{\sn}Q^{-1}(h)\nabla_{\sn}  \phi\cdot \nabla_{\sn} \phi d \mu^{eig},
\end{split}
\end{equation}
where $\dot{U}$ satisfies the following equation
\begin{equation*}
\label{Ut}
\left\{
\begin{array}{lr}
\Delta \dot{U}=-\lambda \dot{U}, & X\in  \Omega, \\
\dot{U}(X)=\phi(\nu_{\Omega}(X))|\nabla U(X)|,   & X\in  \partial \Omega.
\end{array}\right.
\end{equation*}
\end{theo}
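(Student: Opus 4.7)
The plan is to mirror the derivations for $q$-capacity (Theorem \ref{TP2p}) and for torsion (Theorem \ref{TP2}), substituting the first-eigenvalue concavity information from Lemma \ref{LQsg} in place of the Brunn--Minkowski inequalities used there. I will work at $p=1$ throughout, since \eqref{Cta} is the classical Brunn--Minkowski inequality.

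First, given $\phi\in C^{\infty}(\sn)$, choose $\varepsilon>0$ so that $h_{t}:=h+t\phi\in\mathcal{S}$ for $t\in(-\varepsilon,\varepsilon)$, and form the one-parameter family $\Pi(h,\phi,I)$ from \eqref{phic}. Set $\Lambda_{1}(t)=\lambda(h_{t})$ and consider $M(t):=\Lambda_{1}(t)^{-1/2}$. By Lemma \ref{LQsg}, $M$ is concave at $t=0$, and a direct computation of $M''(0)\le0$ yields the differential inequality
\begin{equation*}
\Lambda_{1}''(0)\,\lambda(\Omega)\;\ge\;\tfrac{3}{2}\bigl(\Lambda_{1}'(0)\bigr)^{2},
\end{equation*}
which I will rearrange as $\Lambda_{1}''(0)\ge \tfrac{3}{2\lambda(\Omega)}(\Lambda_{1}'(0))^{2}$.

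Second, I plug $p=1$ into the variation formulas of Lemma \ref{fsdq}. The $(1-p)/p^{2}$ term disappears, and I am left with
\begin{equation*}
\Lambda_{1}'(0)=-\int_{\sn}\phi\,\tfrac{|\nabla U(F(\xi))|^{2}}{\kappa}\,d\xi,
\end{equation*}
together with four surviving terms in $\Lambda_{1}''(0)$: the $\nabla\dot U\cdot\xi$ term, the $c_{ii}$ trace term, the $\lambda U$ interior contribution, and the divergence term $-\int\sum_{i,j}\phi\,(c_{ij}|\nabla U|^{2}\phi_{i})_{j}\,d\xi$. I then integrate the divergence term by parts on $\sn$, moving one derivative from $\phi$ onto itself; this converts it into the quadratic form $\int\sum_{i,j}c_{ij}|\nabla U|^{2}\phi_{i}\phi_{j}\,d\xi$, which on the right-hand side of the inequality will become the Dirichlet-type integral against $Q^{-1}(h)$.

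Third, I rewrite every integral in terms of $d\mu^{eig}=|\nabla U(F(\xi))|^{2}\det(h_{ij}+h\delta_{ij})\,d\xi$ and the boundary identity $\dot U(F(\xi))=\phi(\xi)|\nabla U(F(\xi))|$ from Lemma \ref{ars} (at $p=1$). Concretely, the translations are:
$\int \phi\,d\mu^{eig}=-\Lambda_{1}'(0)$;
$\int\phi\,\tfrac{|\nabla U|}{\kappa}\,\nabla\dot U\!\cdot\!\xi\,d\xi=\int \phi^{2}\tfrac{\nabla\dot U\cdot\xi}{\dot U}\,d\mu^{eig}$;
$\int\sum_{i}c_{ii}|\nabla U|^{2}\phi^{2}\,d\xi=\int\operatorname{tr}(Q^{-1}(h))\phi^{2}\,d\mu^{eig}$ (via $\operatorname{tr}(Q^{-1})=\kappa\sum_{i}c_{ii}$);
$\int \lambda U\phi^{2}|\nabla U|/\kappa\,d\xi=\int \lambda U\phi^{3}\,\dot U^{-1}\,d\mu^{eig}$;
and the integrated-by-parts term becomes $\int Q^{-1}(h)\nabla_{\sn}\phi\cdot\nabla_{\sn}\phi\,d\mu^{eig}$. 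Collecting these identifications in the inequality $\Lambda_{1}''(0)\ge\tfrac{3}{2\lambda(\Omega)}(\Lambda_{1}'(0))^{2}$, moving the terms carrying $\dot U$, $\operatorname{tr}(Q^{-1})$, and $\lambda U$ to the left, and leaving the Dirichlet-type term on the right, yields \eqref{aq66}.

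The computation itself is routine; the only delicate point is the integration by parts against the cofactor matrix $c_{ij}$, where I will use the Cheng--Yau identity $\sum_{j}c_{ij,j}=0$ (already invoked in the proof of Lemma \ref{fsd}) to discard the unwanted divergence of $c_{ij}$ and keep the expression self-adjoint. Once the boundary-free version is obtained on $\sn$, translating to the measure $\mu^{eig}$ is bookkeeping.
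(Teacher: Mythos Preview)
Your proposal is correct and follows essentially the same approach as the paper: it invokes the concavity from Lemma \ref{LQsg} to obtain $\Lambda_{1}''(0)\ge\tfrac{3}{2\lambda(\Omega)}(\Lambda_{1}'(0))^{2}$, substitutes the $p=1$ case of Lemma \ref{fsdq}, integrates the divergence term by parts (using the Cheng--Yau identity $\sum_{j}c_{ij,j}=0$), and then rewrites each integral in terms of $d\mu^{eig}$ via $c_{ij}=\det(h_{ij}+h\delta_{ij})(h_{ij}+h\delta_{ij})^{-1}$ and the boundary relation $\dot U(F(\xi))=\phi(\xi)|\nabla U(F(\xi))|$. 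The paper's proof does exactly this, in the same order.
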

\begin{proof}

 Assume first that $\phi\in C^{\infty}(\sn)$. Let $\varepsilon>0$ be such that $h+t\phi\in \mathcal{S}$ for every $t\in (-\varepsilon,\varepsilon)$. Set  $\Lambda(t)=\lambda(h+t\phi)$ and $W(t)=\Lambda(t)^{-1/2}$ for $t\in (-\varepsilon,\varepsilon)$. Recall Lemma \ref{LQsg}, we know that $W$ is concave at $t=0$, then we get
\begin{equation}\label{1w}
W^{''}(0)=\frac{3}{4}\Lambda(0)^{-\frac{1}{2}-2}(\Lambda^{'}(0))^{2}-\frac{1}{2}\Lambda(0)^{-\frac{1}{2}-1}\Lambda^{''}(0)\leq 0.
\end{equation}
Substituting \eqref{1p} and \eqref{2p}  into \eqref{1w}, we derive
\begin{equation}
\begin{split}
\label{ert6}
&\frac{3}{2\lambda(\Omega)}\left(\int_{{\sn}}\phi|\nabla  U(F(\xi))|^{2}\det(h_{ij}(\xi)+h(\xi)\delta_{ij}){d}{\xi}\right)^{2}\\
&\quad -2\int_{\sn}\phi\frac{|\nabla U(F(\xi))|}{\kappa} \nabla \dot{U}(F(\xi))\cdot \xi d \xi\\
&\quad+2\int_{\sn}\lambda U\phi^{2}|\nabla U(F(\xi))|\det(h_{ij}(\xi)+h(\xi)\delta_{ij}){d}{\xi} \\
&\quad -\int_{\sn}{\rm tr}(c_{ij})|\nabla U(F(\xi))|^{2}\phi^{2}d \xi\\
&\leq -\int_{\sn}\sum_{i,j}\phi(c_{ij}|\nabla U(F(\xi))|^{2}\phi_{i})_{j}d \xi.
\end{split}
\end{equation}
Employing an integration by parts, we have
\begin{equation}
\begin{split}
\label{a166}
&\frac{3}{2\lambda(\Omega)}\left(\int_{{\sn}}\phi|\nabla  U(F(\xi))|^{2}\det(h_{ij}(\xi)+h(\xi)\delta_{ij}){d}{\xi}\right)^{2}\\
&\quad -2\int_{\sn}\phi\frac{|\nabla U(F(\xi))|}{\kappa} \nabla \dot{U}(F(\xi))\cdot \xi d \xi\\
&\quad+2\int_{\sn}\lambda U\phi^{2}|\nabla U(F(\xi))|\det(h_{ij}(\xi)+h(\xi)\delta_{ij}){d}{\xi} \\
&\quad -\int_{\sn}{\rm tr}(c_{ij})|\nabla U(F(\xi))|^{2}\phi^{2}d \xi\\
&\leq \int_{\sn}\sum_{i,j}\phi_{j}c_{ij}|\nabla U(F(\xi))|^{2}\phi_{i}d \xi.
\end{split}
\end{equation}
Note that
\begin{equation}\label{cijh6}
c_{ij}=\det(h_{ij}+h\delta_{ij})(h_{ij}+h\delta_{ij})^{-1}.
\end{equation}
Then \eqref{aq66} holds by substituting \eqref{cijh6} and the definition of $\mu^{eig}$ into \eqref{a166}. The proof is completed.

\end{proof}

With the aid of Theorem \ref{TP36}, following again the proof of Theorem \ref{main1}, we can also get the following result.

\begin{theo}\label{main3}
Let $\Omega \in \koo$ be of class $C^{\infty}_{+}$ and $U$ be the solution of \eqref{MA} in $\Omega$.  For every function $\psi\in C^{\infty}(\partial \Omega)$, then we obtain
\begin{equation}
\begin{split}
\label{aq96}
&\frac{3}{2\lambda(\Omega)}\left(\int_{\partial \Omega}\psi|\nabla U|^{2}d \mathcal{H}^{n-1}(X)\right)^{2}-2\int_{\partial \Omega }\psi(\nabla \dot{U}(X)\cdot \nu_{\Omega})|\nabla U|d \mathcal{H}^{n-1}(X)\\
&\quad-\int_{\partial \Omega}{\rm tr}(\Pi_{\partial\Omega})\psi^{2}|\nabla U|^{2}d \mathcal{H}^{n-1}(X)\\
&\leq \int_{\partial \Omega}(\Pi^{-1}_{\partial \Omega}) \nabla_{\partial \Omega} \psi\cdot \nabla_{\partial \Omega} \psi|\nabla U|^{2}d \mathcal{H}^{n-1}(X).
\end{split}
\end{equation}
In particular, if
\[
\int_{\partial \Omega}\psi|\nabla U|^{2}d \mathcal{H}^{n-1}(X)=0,
\]
then
\begin{equation*}
\begin{split}
\label{Up3}
&-2\int_{\partial \Omega }\psi(\nabla \dot{U}(X)\cdot \nu_{\Omega})|\nabla U|d \mathcal{H}^{n-1}(X)-\int_{\partial \Omega}{\rm tr}(\Pi_{\partial\Omega})\psi^{2}|\nabla U|^{2}d \mathcal{H}^{n-1}(X)\\
&\leq \int_{\partial \Omega}(\Pi^{-1}_{\partial \Omega}) \nabla_{\partial \Omega} \psi\cdot \nabla_{\partial \Omega} \psi|\nabla U|^{2}d \mathcal{H}^{n-1}(X),
\end{split}
\end{equation*}
where $\dot{U}$ satisfies the following equation
\begin{equation*}
\label{Ut}
\left\{
\begin{array}{lr}
\Delta \dot{U}=-\lambda \dot{U}, & X\in \Omega, \\
\dot{U}(X)=|\nabla U(X)|\psi,   & X\in  \partial \Omega.
\end{array}\right.
\end{equation*}
\end{theo}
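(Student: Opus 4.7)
The plan is to deduce Theorem \ref{main3} directly from the infinitesimal form in Theorem \ref{TP36} by pulling back each spherical integral to $\partial\Omega$ via the inverse Gauss map, in complete analogy with how Theorem \ref{main1} was obtained from Theorem \ref{TP2p}. The starting point is to fix $\psi \in C^{\infty}(\partial\Omega)$ and define $\phi(\xi) = \psi(F(\xi))$ on $\sn$, where $F = \nu_{\Omega}^{-1}$. Then $\phi \in C^{\infty}(\sn)$, and the spherical change of variables converts $d\mathcal{H}^{n-1}(X)$ into $\det(h_{ij}+h\delta_{ij})\,d\xi$, which in particular turns the defining identity \eqref{tormes22} for $\mu^{eig}$ into $\int_{\sn}\phi\,d\mu^{eig} = \int_{\partial\Omega}\psi|\nabla U|^{2}d\mathcal{H}^{n-1}$.

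Next, each of the remaining terms in \eqref{aq66} is rewritten as follows. First, the boundary datum of $\dot U$ in Theorem \ref{TP36} simplifies (since $p=1$) to $\dot U(X) = \psi(X)|\nabla U(X)|$ on $\partial\Omega$, so that $\phi^{2}(\nabla \dot U\cdot \xi)/\dot U$ becomes $\psi(\nabla \dot U\cdot \nu_{\Omega})/|\nabla U|$ after pullback, and therefore
\[
\int_{\sn}\phi^{2}\frac{\nabla \dot U(F(\xi))\cdot\xi}{\dot U(F(\xi))}\,d\mu^{eig}=\int_{\partial\Omega}\psi(\nabla \dot U\cdot\nu_{\Omega})|\nabla U|\,d\mathcal{H}^{n-1}.
\]
Second, using $c_{ij}=\det(h_{ij}+h\delta_{ij})(h_{ij}+h\delta_{ij})^{-1}$ together with $h_{ij}+h\delta_{ij}=D(\nu_{\Omega}^{-1})$, the trace $\operatorname{tr}(Q^{-1}(h))$ corresponds to the mean curvature $\operatorname{tr}(\Pi_{\partial\Omega})$, yielding the second LHS term of \eqref{aq96}. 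Third, the quadratic form $Q^{-1}(h)\nabla_{\sn}\phi\cdot\nabla_{\sn}\phi$ transforms to $\Pi_{\partial\Omega}^{-1}\nabla_{\partial\Omega}\psi\cdot\nabla_{\partial\Omega}\psi$ by the same chain-rule computation already carried out in the proof of Theorem \ref{main1}, giving the RHS of \eqref{aq96}. Crucially, the term $2\int_{\sn}\lambda U\phi^{3}/\dot U\,d\mu^{eig}$ pulls back to an integral that carries $U|_{\partial\Omega}$ as a factor; since $U$ vanishes identically on $\partial\Omega$ by \eqref{MA}, this term disappears, which explains why no $\lambda U$ contribution survives in \eqref{aq96}.

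Assembling the four identifications into \eqref{aq66} produces \eqref{aq96}. The special case $\int_{\partial\Omega}\psi|\nabla U|^{2}d\mathcal{H}^{n-1}=0$ then follows immediately, since the first term on the LHS of \eqref{aq96} is non-negative and vanishes under this hypothesis, so dropping it preserves the inequality. The principal technical point is the gradient identification $\nabla_{\sn}\phi = (D\nu_{\Omega})^{-T}(\nabla_{\partial\Omega}\psi)\circ F$ and the conversion of the cofactor quadratic form to $\Pi_{\partial\Omega}^{-1}$; this is precisely the calculation displayed in equations \eqref{a5} in the proof of Theorem \ref{main1}, and can simply be cited rather than reproduced. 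Apart from that, the argument is a direct term-by-term translation between $\sn$ and $\partial\Omega$.
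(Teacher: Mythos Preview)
Your proposal is correct and follows essentially the same approach as the paper, which simply says ``With the aid of Theorem \ref{TP36}, following again the proof of Theorem \ref{main1}, we can also get the following result.'' Your explicit observation that the $\lambda U$ term drops out because $U\equiv 0$ on $\partial\Omega$ is a helpful clarification the paper leaves implicit.
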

\begin{rem}
Above Poincar\'{e}-type inequalities constitute the infinitesimal versions of the (sharp) $L_{p}$-Brunn-Minkowski inequalities for variational functionals with $p\geq 1$, and these inequalities are sharp since related Brunn-Minkowski inequalities are sharp.

\end{rem}
\section{Local $L_{p}$-Brunn-Minkowski inequalities for torsion with $0\leq p<1$}
\label{Sec5}

Before proving Theorems \ref{main5} and \ref{main6}, we first introduce the following problem about the infinitesimal formulations of the $L_{p}$-Brunn-Minkowski inequalities for torsion.

{\bf Problem A:} Let $p\in [0,1)$. For all $\Omega \in \mathcal{K}^{\infty}_{+,e}$:
\begin{equation}\label{PI}
\forall \varphi^{p}\in C^{\infty}_{e}(\sn)  \quad \frac{d^{2}}{(dt)^{2}}\Big|_{t=0}T(\Omega+_{p}t\cdot \varphi)^{\frac{p}{n+2}}\leq 0.
\end{equation}
When $p=0$, \eqref{PI} is interpreted as
\begin{equation}\label{PII}
\forall \ positive \ \varphi\in C^{\infty}_{e}(\sn)\quad \frac{d^{2}}{(d t)^{2}}\Big|_{t=0}\log T(\Omega+_{0}t\cdot \varphi)\leq 0.
\end{equation}

\begin{prop}\label{ppo1}
Let $\Omega\in \mathcal{K}^{\infty}_{+,e}$ and $U$ be the solution of \eqref{Tor2} in $\Omega$. Given $0<p<1$, \eqref{PI} for $\Omega$ is equivalent to the assertion that for  $\phi^{p}\in C^{\infty}_{e}(\sn)$,
\begin{equation}
\begin{split}
\label{aq6wl}
&\frac{(p-n-2)}{(n+2)T(\Omega)}\left(\int_{\sn}\phi^{p}h^{1-p}d \mu^{tor}\right)^{2}+ (1-p)\int_{\sn}\phi^{2p}h^{1-2p}d \mu^{tor}\\
&\quad -2 \int_{\sn }\phi^{2p}h^{2-2p}\frac{(\nabla \dot{U}(F(\xi))\cdot \xi)}{\dot{U}(F(\xi))}d \mu^{tor}-\int_{\sn}{\rm tr}((Q^{-1}(h))\phi^{2p}h^{2-2p}d \mu^{tor}\\
&\quad +\frac{2}{p}\int_{\sn}h^{3-3p}\phi^{3p}\frac{1}{\dot{U}(F(\xi))}d \mu^{tor} \leq \int_{\sn}Q^{-1}(h)\nabla_{\sn} \left( \phi^{p}h^{1-p}\right)\cdot \nabla_{\sn} \left(\phi^{p}h^{1-p}\right)d \mu^{tor},
\end{split}
\end{equation}
where $\dot{U}$ satisfies the following equation
\begin{equation*}
\label{Ut}
\left\{
\begin{array}{lr}
\Delta \dot{U}=0, & X\in  \Omega, \\
\dot{U}(X)=\frac{1}{p}\phi^{p}(\nu_{\Omega}(X))h^{1-p}(\nu_{\Omega}(X))|\nabla U(X)|,   & X\in  \partial \Omega.
\end{array}\right.
\end{equation*}
\end{prop}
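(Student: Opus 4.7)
The plan is to run, at the level of a single body $\Omega$, the algebraic manipulation that underlies Theorem \ref{TP2}, and to note that every step in that chain is an equivalence. No global Brunn-Minkowski inequality enters; only Lemma \ref{fsd5} together with reversible rewritings on the closed manifold $\sn$.

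First I would fix $\phi^p\in C^\infty_e(\sn)$, choose $\varepsilon>0$ so that $h_t:=(h^p+t\phi^p)^{1/p}\in\mathcal{S}$ for $t\in(-\varepsilon,\varepsilon)$, and set $g_p(t):=T(h_t)$ and $Q_p(t):=g_p(t)^{p/(n+2)}$. Two differentiations yield
\[
Q_p''(0)=\tfrac{p(p-n-2)}{(n+2)^{2}}g_p(0)^{p/(n+2)-2}\bigl(g_p'(0)\bigr)^{2}+\tfrac{p}{n+2}g_p(0)^{p/(n+2)-1}g_p''(0),
\]
and since $g_p(0)=T(\Omega)>0$ and $p>0$, the condition \eqref{PI} applied to $\varphi^p=\phi^p$ is equivalent, after dividing by the positive factor $\tfrac{p}{n+2}g_p(0)^{p/(n+2)-1}$ and multiplying through by $p^{2}$ to clear the $1/p$-factors that appear in Lemma \ref{fsd5}, to
\[
\tfrac{p^{2}(p-n-2)}{(n+2)T(\Omega)}\bigl(g_p'(0)\bigr)^{2}+p^{2}g_p''(0)\le 0.
\]
Next I would substitute \eqref{cz1} and \eqref{cz2} from Lemma \ref{fsd5}. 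The identity $p\cdot g_p'(0)=\int_{\sn}\phi^p h^{1-p}\,d\mu^{tor}$ turns the leading term into $\tfrac{(p-n-2)}{(n+2)T(\Omega)}\bigl(\int_{\sn}\phi^p h^{1-p}\,d\mu^{tor}\bigr)^{2}$, and three of the five contributions to $p^{2}g_p''(0)$ match term-by-term with the second, third, and fourth integrals in \eqref{aq6wl}.

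Two genuine rewritings remain, both of which appear in the proof of Theorem \ref{TP2}. First, the divergence term $\int_{\sn}\sum_{i,j}h^{1-p}\phi^p\bigl(c_{ij}|\nabla U(F(\xi))|^{2}(h^{1-p}\phi^p)_i\bigr)_j\,d\xi$ is integrated by parts on $\sn$ (no boundary term since $\sn$ is closed), and then the cofactor identity $c_{ij}=\det(h_{k\ell}+h\delta_{k\ell})(h_{k\ell}+h\delta_{k\ell})^{-1}_{ij}$ together with $d\mu^{tor}=|\nabla U(F(\xi))|^{2}\det(h_{k\ell}+h\delta_{k\ell})\,d\xi$ converts it into $-\int_{\sn}Q^{-1}(h)\nabla_{\sn}(\phi^p h^{1-p})\cdot\nabla_{\sn}(\phi^p h^{1-p})\,d\mu^{tor}$, which becomes the right-hand side of \eqref{aq6wl} after being moved across the inequality. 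Second, the term $2\int_{\sn}h^{2-2p}\phi^{2p}|\nabla U(F(\xi))|/\kappa\,d\xi=2\int_{\sn}h^{2-2p}\phi^{2p}|\nabla U(F(\xi))|^{-1}\,d\mu^{tor}$ is rewritten using the Dirichlet datum $\dot U(F(\xi))=\tfrac{1}{p}\phi^p h^{1-p}|\nabla U(F(\xi))|$ from Lemma \ref{fsd5}, i.e.\ $|\nabla U(F(\xi))|^{-1}=\phi^p h^{1-p}/(p\dot U(F(\xi)))$, yielding $\tfrac{2}{p}\int_{\sn}h^{3-3p}\phi^{3p}/\dot U(F(\xi))\,d\mu^{tor}$ and matching the fifth term of \eqref{aq6wl}.

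Collecting everything reproduces \eqref{aq6wl}; since every manipulation above is an equality, the converse direction is automatic. I do not expect a serious obstacle — the calculation is mechanical given Lemma \ref{fsd5}, and the only point requiring care is checking that the admissible test functions on the two sides coincide, i.e.\ that $\phi^p$ ranging over $C^\infty_e(\sn)$ (with $(h^p+t\phi^p)^{1/p}\in\mathcal{S}$ for small $t$) exhausts the admissible perturbations $\varphi^p$ in \eqref{PI}, which follows from $h>0$ on $\sn$ and the openness of $\mathcal{S}$ in the $C^\infty$ topology.
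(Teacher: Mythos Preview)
Your proposal is correct and follows exactly the route the paper intends: the paper's proof reads in its entirety ``The proof is similar to the proof of Theorem \ref{TP2},'' and you have faithfully unwound that computation, correctly noting that each step (expansion of $Q_p''(0)$, substitution of Lemma \ref{fsd5}, integration by parts, cofactor identity, and the boundary relation for $\dot U$) is a reversible equality so that the equivalence in both directions follows. One small bookkeeping remark: the third integral in \eqref{aq6wl} (the one involving $(\nabla\dot U\cdot\xi)/\dot U$) also requires the substitution $|\nabla U(F(\xi))|=p\dot U(F(\xi))/(\phi^p h^{1-p})$ rather than matching ``term-by-term,'' but this is the same trivial rewriting you already describe for the fifth term.
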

\begin{proof}
The proof is similar to the proof of Theorem \ref{TP2}.
\end{proof}

Proposition \ref{ppo1} leads to the following outcome.

\begin{prop}\label{ppo2}
Let $\Omega\in \mathcal{K}^{\infty}_{+,e}$ and $U$ be the solution of \eqref{Tor2} in $\Omega$. Given $0<p<1$, \eqref{PI} for $\Omega$ is equivalent to the assertion that for  $\psi^{p}\in C^{\infty}_{e}(\partial \Omega)$,
\begin{equation}
\begin{split}
\label{aq9q}
&\frac{(p-n-2)}{(n+2)T(\Omega)}\left(\int_{\partial \Omega}\frac{\psi^{p}}{|\nabla h^{*}|^{1-p}}|\nabla U|^{2}d \mathcal{H}^{n-1}(X)\right)^{2}\\
&\quad + (1-p)\int_{\partial \Omega}\frac{\psi^{2p}}{|\nabla h^{*}|^{1-2p}}|\nabla U|^{2}d \mathcal{H}^{n-1}(X)\\
&\quad -2p \int_{\partial \Omega }\frac{\psi^{p}}{|\nabla h^{*}|^{1-p}}(\nabla \dot{U}(X)\cdot \nu_{\Omega})|\nabla U|d \mathcal{H}^{n-1}(X)\\
&\quad+2\int_{\partial \Omega} \frac{\psi^{2p}}{|\nabla h^{*}|^{2-2p}}|\nabla U|d \mathcal{H}^{n-1}(X) -\int_{\partial \Omega}{\rm tr}(\Pi_{\partial\Omega})\frac{\psi^{2p}}{|\nabla h^{*}|^{2-2p}}|\nabla U|^{2}d \mathcal{H}^{n-1}(X)\\
&\leq \int_{\partial \Omega}(\Pi^{-1}_{\partial \Omega})\nabla_{\partial \Omega} \left( \frac{\psi^{p}}{|\nabla h^{*}|^{1-p}}\right)\cdot \nabla_{\partial \Omega} \left( \frac{\psi^{p}}{|\nabla h^{*}|^{1-p}}\right)|\nabla U|^{2}d \mathcal{H}^{n-1}(X),
\end{split}
\end{equation}
where $\dot{U}$ satisfies the following equation
\begin{equation*}
\label{Ut}
\left\{
\begin{array}{lr}
\Delta \dot{U}=0, & X\in \Omega, \\
\dot{U}(X)=\frac{\psi^{p}}{p|\nabla h^{*}|^{1-p}}|\nabla U(X)|,   & X\in  \partial \Omega.
\end{array}\right.
\end{equation*}
\end{prop}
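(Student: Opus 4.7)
The plan is to deduce Proposition \ref{ppo2} from Proposition \ref{ppo1} by transferring every spherical integral in \eqref{aq6wl} to a boundary integral via the Gauss map, in exact parallel with the passage from Theorem \ref{TP2} to Theorem \ref{main2} carried out in Section \ref{Sec4}. Since Proposition \ref{ppo1} already identifies \eqref{PI} with the spherical inequality \eqref{aq6wl}, it suffices to establish the term-by-term equivalence of \eqref{aq6wl} and \eqref{aq9q}.

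First, I would set up the bijection $C^{\infty}_{e}(\sn)\leftrightarrow C^{\infty}_{e}(\partial\Omega)$ by $\phi(\xi)=\psi(F(\xi))$ (valid since $\Omega\in\mathcal{K}^{\infty}_{+,e}$ means $F=\nu_{\Omega}^{-1}$ is a smooth diffeomorphism), and record the key identities used throughout Section \ref{Sec4}: $h(\nu_{\Omega}(X))=1/|\nabla h^{*}(X)|$; the surface-area relation $\int_{\sn}g\,dS(\Omega,\cdot)=\int_{\partial\Omega}(g\circ\nu_{\Omega})\,d\mathcal{H}^{n-1}$; the definition $d\mu^{tor}=|\nabla U|^{2}\,dS(\Omega,\cdot)$; and the identification $h_{ij}+h\delta_{ij}=D(\nu_{\Omega}^{-1})$ so that $Q^{-1}(h)$ pulls back to $\Pi_{\partial\Omega}^{-1}$ and $\mathrm{tr}(Q^{-1}(h))$ to $\mathrm{tr}(\Pi_{\partial\Omega})$. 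With these, the quadratic ``$(p-n-2)/((n+2)T(\Omega))$'' term, the $(1-p)$ term, the $\mathrm{tr}$ term of \eqref{aq6wl}, and the right-hand Dirichlet-type term all translate directly into the corresponding terms of \eqref{aq9q}; the gradient term is handled exactly as in the proof of Theorem \ref{main2}, where $\nabla_{\sn}(\phi^{p}h^{1-p})$ becomes $\nabla_{\partial\Omega}(\psi^{p}/|\nabla h^{*}|^{1-p})$ after pullback.

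Next, for the two terms containing $\dot{U}$, I would use the boundary datum $\dot{U}|_{\partial\Omega}=\tfrac{1}{p}\phi^{p}h^{1-p}|\nabla U|=\tfrac{1}{p}\psi^{p}|\nabla U|/|\nabla h^{*}|^{1-p}$. Substituting this into the denominator $\dot{U}(F(\xi))$ converts $-2\int\phi^{2p}h^{2-2p}(\nabla\dot{U}(F(\xi))\cdot\xi)/\dot{U}(F(\xi))\,d\mu^{tor}$ into $-2p\int(\psi^{p}/|\nabla h^{*}|^{1-p})(\nabla\dot{U}\cdot\nu_{\Omega})|\nabla U|\,d\mathcal{H}^{n-1}$, and turns $\tfrac{2}{p}\int h^{3-3p}\phi^{3p}/\dot{U}(F(\xi))\,d\mu^{tor}$ into $+2\int\psi^{2p}|\nabla U|/|\nabla h^{*}|^{2-2p}\,d\mathcal{H}^{n-1}$; here the factors of $p$ cancel precisely in the way demanded by the statement. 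Collecting all contributions then shows \eqref{aq6wl} and \eqref{aq9q} coincide, which via Proposition \ref{ppo1} yields Proposition \ref{ppo2}.

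The main (and essentially only) obstacle is the careful bookkeeping of the $1/p$ factor from the $\dot{U}$-boundary datum against the prefactors $-2$ and $2/p$ in \eqref{aq6wl}; once this is done, every remaining equivalence is a direct Gauss-map change of variables already performed in Section \ref{Sec4}, and no new analytic input is required. In particular, no new regularity or approximation step is needed, because $\Omega\in\mathcal{K}^{\infty}_{+,e}$ ensures that $F$, $\nu_{\Omega}$, $\Pi_{\partial\Omega}$, and the boundary trace of $\dot{U}$ are all smooth, and the bijection $\phi\mapsto\phi\circ\nu_{\Omega}$ preserves evenness, so the class of admissible test functions is the same on both sides.
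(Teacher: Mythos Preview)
Your proposal is correct and follows exactly the route the paper intends: the paper simply states that Proposition~\ref{ppo2} is obtained from Proposition~\ref{ppo1} by the same Gauss-map transfer used to pass from Theorem~\ref{TP2} to Theorem~\ref{main2} (and, in turn, from Theorem~\ref{TP2p} to Theorem~\ref{main1}). Your term-by-term bookkeeping, in particular the cancellation of the $1/p$ from the boundary datum of $\dot U$ against the prefactors $-2$ and $2/p$ in \eqref{aq6wl}, is exactly what the paper leaves to the reader.
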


The proof of the following lemma is similar to the one for Lemma \ref{fsd5}.

\begin{lem}\label{fsd7}
 Let $\Omega_{t}\in \mathcal{K}^{\infty}_{+,e}$ with support function $h_{t}$, and $U(\cdot,t)$ be the solution of \eqref{Tor2} in $\Omega_{t}$. Let  $\tilde{\phi}\in C^{\infty}_{e}(\sn)$ be positive,  and for  $\varepsilon>0$ such that $h\tilde{\phi}^{t}\in \mathcal{S}$ for every $t\in (-\varepsilon,\varepsilon)$. Set $h_{t}=h\tilde{\phi}^{t}$ and $g(t)=T(h_{t})$. Then
\begin{equation}\label{tyr}
g^{'}(0)=\int_{\sn}h \log \tilde{\phi}\frac{ |\nabla U(F(\xi))|^{2}}{\kappa}d \xi,
\end{equation}
and
\begin{equation}
\begin{split}
\label{tyr2}
g^{''}(0)&=\int_{\sn}h(\log \tilde{\phi})^{2}|\nabla U(F(\xi))|^{2}\frac{1}{\kappa}d\xi\\
&\quad -2\int_{\sn}h\log \tilde{\phi} \frac{|\nabla U(F(\xi))|}{\kappa}\nabla \dot{U}(F(\xi))\cdot \xi d \xi\\
&\quad-\int_{\sn}\sum_{i}|\nabla U(F(\xi))|^{2}c_{ii}(h\log \tilde{\phi})^{2}d \xi\\
&\quad +2\int_{\sn}(h\log \tilde{\phi})^{2}\frac{|\nabla U(F(\xi))|}{\kappa}d \xi\\
&\quad +\int_{\sn}\sum_{i,j}h\log \tilde{\phi}(c_{ij}|\nabla U(F(\xi))|^{2}(h\log \tilde{\phi})_{i})_{j}d \xi,
\end{split}
\end{equation}
where $\dot{U}$ satisfies the following equation
\begin{equation*}
\label{Ut}
\left\{
\begin{array}{lr}
\Delta \dot{U}=0, & X\in \Omega, \\
\dot{U}(X)=|\nabla U(X)|h\log\tilde{\phi}(\nu_{\Omega}(X)),   & X\in  \partial \Omega.
\end{array}\right.
\end{equation*}
\end{lem}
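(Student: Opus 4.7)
The plan is to mimic the proof of Lemma \ref{fsd5} verbatim, with the multiplicative (log) deformation $h_{t}=h\tilde{\phi}^{t}$ replacing the $L_{p}$-deformation $(h^{p}+t\phi^{p})^{1/p}$. The reason this works is that the abstract machinery in the earlier lemmas (variational formula, self-adjointness, integration by parts on $\sn$) never uses the specific form of $h_{t}$; it only uses $h'_{t}$ and $h''_{t}$ at $t=0$, which I can compute independently.

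First I would compute, by the chain rule applied to $h_{t}=h\exp(t\log\tilde{\phi})$,
\[
h'_{t}=h_{t}\log\tilde{\phi}, \qquad h''_{t}=h_{t}(\log\tilde{\phi})^{2},
\]
so that $h'_{0}=h\log\tilde{\phi}$ and $h''_{0}=h(\log\tilde{\phi})^{2}$. These are the clean analogues of $p^{-1}\phi^{p}h^{1-p}$ and $p^{-2}(1-p)\phi^{2p}h^{1-2p}$ appearing in Lemma \ref{fsd5}.

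Next, starting from the torsion representation
\[
g(t)=T(h_{t})=\frac{1}{n+2}\int_{\sn}h_{t}(\xi)G(h_{t};\xi)\,d\xi, \quad G(h_{t};\xi):=|\nabla U(F(\xi,t),t)|^{2}\det((h_{t})_{ij}+h_{t}\delta_{ij}),
\]
I would differentiate once, invoke Lemmas \ref{UPO8v} and \ref{arv} (the torsion counterparts of Lemmas \ref{UPO} and \ref{ar} used in Lemma \ref{fsd}), and exploit the self-adjointness of the associated operator $\tilde{\mathcal{L}}^{t}$ on $L^{2}(\sn,d\xi)$ exactly as in \eqref{selfad}. Together with homogeneity $G((1+s)h_{t};\xi)=(1+s)^{n+1}G(h_{t};\xi)$, the same manipulation as in \eqref{Fcapcalcu} collapses $g'(t)$ to
\[
g'(t)=\int_{\sn}h'_{t}(\xi)G(h_{t};\xi)\,d\xi,
\]
which at $t=0$ with $h'_{0}=h\log\tilde{\phi}$ immediately gives \eqref{tyr}.

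Differentiating once more yields
\[
g''(t)=\int_{\sn}h''_{t}G(h_{t})\,d\xi+\int_{\sn}h'_{t}\tilde{\mathcal{L}}^{t}(h'_{t})\,d\xi,
\]
and expanding $\tilde{\mathcal{L}}^{t}$ via Lemma \ref{UPO8v} produces four pieces paralleling \eqref{cz2}: a $\nabla\dot{U}\cdot\xi$ term from item (iii), a $\sum_{i}c_{ii}$ term from item (i), an \emph{extra} $|\nabla U|/\kappa$ term from the inhomogeneous $-1$ in item (ii) (absent in the capacity case, and the only truly new ingredient relative to Lemma \ref{fsd}), and a divergence term that I would integrate by parts on $\sn$ as in the step from \eqref{ertsw} to \eqref{a16w}. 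Substituting $h'_{0}=h\log\tilde{\phi}$ and $h''_{0}=h(\log\tilde{\phi})^{2}$ yields the five integrals in \eqref{tyr2}, and the boundary data for $\dot{U}$ is read directly off Lemma \ref{arv} with $h'_{t}|_{t=0}=h\log\tilde{\phi}$.

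The only point that requires care is bookkeeping: matching each of the five summands in \eqref{tyr2} with its origin inside $\tilde{\mathcal{L}}^{0}(h'_{0})$ and the $h''_{0}G(h)$ leading term. This is a mechanical translation of the $L_{p}$ expressions in \eqref{cz1}--\eqref{cz2}, and in fact the $p\to 0$ limit, made rigorous via the multiplicative parametrization, gives precisely \eqref{tyr}--\eqref{tyr2}. No new analytic difficulty arises beyond the torsion-specific Lemmas \ref{UPO8v} and \ref{arv} already established.
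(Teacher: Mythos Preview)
Your approach is correct and is exactly what the paper does: it states that the proof is similar to that of Lemma \ref{fsd5}, and your proposal carries this out by substituting $h'_{0}=h\log\tilde{\phi}$, $h''_{0}=h(\log\tilde{\phi})^{2}$ into the general variational scheme built from Lemmas \ref{UPO8v} and \ref{arv}. One small correction: the last term in \eqref{tyr2} is stated in the \emph{un}-integrated divergence form $\int_{\sn}h\log\tilde{\phi}\,(c_{ij}|\nabla U|^{2}(h\log\tilde{\phi})_{i})_{j}\,d\xi$, just as in \eqref{cz2}; the integration by parts you mention (from \eqref{ertsw} to \eqref{a16w}) occurs only later, in the proof of Theorem \ref{TP2}/Proposition \ref{TP9o}, not in this lemma.
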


The infinitesimal formulation of \eqref{PII} is as follows.
\begin{prop}\label{TP9o}
Let $\Omega \in \mathcal{K}^{\infty}_{+,e}$  and $U$ be the solution of \eqref{Tor2} in $\Omega$. \eqref{PII} for $\Omega$ is equivalent to the assertion that for $\phi\in C^{\infty}_{e}(\sn)$,
\begin{equation}
\begin{split}
\label{LP228}
&\int_{ \sn}\frac{\phi^{2}}{h^{2}}dG^{tor} -\frac{(n+2)}{T(\Omega)}\left( \int_{\sn}\frac{\phi}{h} dG^{tor} \right)^{2}+2\int_{\sn}\frac{\phi^{3}}{h}\frac{1}{\dot{U}(F(\xi))}d G^{tor}\\
&-2\int_{\sn} \frac{\phi^{2}}{h}\frac{(\nabla \dot{U}(F(\xi))\cdot \xi)}{\dot{U}(F(\xi))}d G^{tor}-\int_{\sn}{\rm tr}(Q^{-1}(h))\frac{\phi^{2}}{h}d G^{tor}\\
&\leq \int_{\sn}\frac{1}{h}Q^{-1}(h)\nabla_{\sn} \phi \cdot \nabla_{\sn} \phi d G^{tor},
\end{split}
\end{equation}
where $\dot{U}$ satisfies the following equation
\begin{equation*}
\label{Ut}
\left\{
\begin{array}{lr}
\Delta \dot{U}=0, & X\in \Omega, \\
\dot{U}(X)=|\nabla U(X)|\phi(\nu_{\Omega}(X)),   & X\in  \partial \Omega.
\end{array}\right.
\end{equation*}
\end{prop}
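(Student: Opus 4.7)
The plan is to mimic the proofs of Theorem \ref{TP2} and Proposition \ref{ppo1}, but now for a logarithmic variation $h_t = h\tilde\phi^{\,t}$ in place of the $L_p$ one. Given any $\phi\in C^{\infty}_e(\sn)$, I would define the positive even function $\tilde\phi := \exp(\phi/h)$, so that $\phi = h\log\tilde\phi$. Since $h$ is bounded below on $\sn$ and $\phi$ is smooth, $\tilde\phi\in C^{\infty}_e(\sn)$ is positive, and there exists $\varepsilon>0$ such that $h_t := h\tilde\phi^{\,t}\in\mathcal S$ for every $t\in(-\varepsilon,\varepsilon)$; this $h_t$ is precisely the support function of the log-Minkowski combination $\Omega+_0 t\cdot\tilde\phi$, with $\frac{d}{dt}\big|_{t=0}h_t = \phi$. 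Setting $g(t):=T(h_t)$, the assumption \eqref{PII} at $\Omega$ is equivalent to $(\log g)''(0)\leq 0$, i.e.\
\[
g''(0)\,T(\Omega)\leq \bigl(g'(0)\bigr)^2
\]
along every such admissible variation; since $\phi\mapsto\tilde\phi=\exp(\phi/h)$ is a bijection onto the positive even smooth functions, the direction-by-direction equivalence will follow once this last inequality is shown to coincide with \eqref{LP228}.

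Next, I would invoke Lemma \ref{fsd7} with this $\tilde\phi$ to get the explicit formulas \eqref{tyr}--\eqref{tyr2} for $g'(0)$ and $g''(0)$, and then translate each integral on $\sn$ against $d\xi$ into one against the cone torsional measure $G^{tor}$. The key identities for the rewriting are $dG^{tor}=\tfrac{h\,|\nabla U(F)|^2}{(n+2)\kappa}d\xi$, the relation $c_{ij}=\det(Q(h))(Q^{-1}(h))_{ij}$ with $\det(Q(h))=1/\kappa$, and the boundary identity
\[
\dot U(F(\xi))=|\nabla U(F(\xi))|\,h(\xi)\log\tilde\phi(\xi)=|\nabla U(F(\xi))|\,\phi(\xi),
\]
which converts factors $1/|\nabla U(F)|$ into $\phi/\dot U$. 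Together with an integration by parts on $\sn$ to handle the divergence-type term $\sum_{i,j}h\log\tilde\phi\,(c_{ij}|\nabla U|^{2}(h\log\tilde\phi)_i)_j$ (justified by $\sum_j c_{ij;j}=0$, as already used in the proofs of Theorem \ref{TP2} and Proposition \ref{ppo1}), every integral in $g'(0)$ and $g''(0)$ becomes $(n+2)$ times an integral against $dG^{tor}$, with integrands expressible in terms of $h$, $\phi=h\log\tilde\phi$, $\dot U\circ F$, and $\nabla\dot U\circ F\cdot\xi$.

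Finally, dividing the resulting inequality $g''(0)T(\Omega)\leq (g'(0))^2$ by $(n+2)T(\Omega)$ and rearranging reproduces \eqref{LP228} term-by-term, while the boundary condition $\dot U=|\nabla U|\,h\log\tilde\phi=|\nabla U|\,\phi\circ\nu_\Omega$ from Lemma \ref{fsd7} matches the one stated in Proposition \ref{TP9o}. The reverse implication follows by retracing these same identities, using the surjectivity of $\phi\mapsto\exp(\phi/h)$ onto positive even smooth functions. The most delicate step I expect is the term-by-term bookkeeping in the rewriting: specifically, using $1/|\nabla U(F)|=\phi/\dot U$ to produce the two terms in \eqref{LP228} with $\dot U$ in the denominator (namely the coefficients $2$ of $\phi^3/(h\dot U)$ and $-2$ of $(\phi^2/h)(\nabla\dot U\cdot\xi)/\dot U$), and correctly extracting the coefficient $-(n+2)/T(\Omega)$ of the squared linear term from $(g'(0))^2=(n+2)^2(\int(\phi/h)\,dG^{tor})^2$ after exactly one factor of $(n+2)$ has been divided out.
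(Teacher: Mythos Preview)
Your proposal is correct and follows essentially the same approach as the paper's own proof: set $\phi=h\log\tilde\phi$, apply Lemma \ref{fsd7} to the logarithmic variation $h_t=h\tilde\phi^{\,t}$, substitute into the concavity inequality $Q''(0)\leq (Q'(0))^2/Q(0)$, integrate by parts, and rewrite against $dG^{tor}=\tfrac{1}{n+2}h\,d\mu^{tor}$. Your treatment is in fact slightly more complete, since you explicitly justify the reverse implication via the bijection $\phi\mapsto\exp(\phi/h)$, which the paper leaves implicit.
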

\begin{proof}
 Assume that  $\tilde{\phi}\in C^{\infty}_{e}(\sn)$ with $\tilde{\phi}>0$. Let $\varepsilon>0$ be such that $h\tilde{\phi}^{t}\in \mathcal{S}$ for every $t\in (-\varepsilon,\varepsilon)$. Set  $Q(t)=T(h\tilde{\phi}^{t})$, so $\log Q(t)$ is concave at $t=0$, then we get
\begin{equation}\label{kkl}
Q^{''}(0)-\frac{(Q^{'}(0))^{2}}{Q(0)}\leq 0.
\end{equation}
Applying \eqref{tyr} and \eqref{tyr2} into \eqref{kkl}, there is
\begin{equation}
\begin{split}
\label{Lm67}
&\int_{\sn}h(\log \tilde{\phi})^{2}|\nabla U(F(\xi))|^{2}\frac{1}{\kappa}d\xi\\
&\quad -2\int_{\sn}h\log \tilde{\phi} \frac{|\nabla U(F(\xi))|}{\kappa}\nabla \dot{U}(F(\xi))\cdot \xi d \xi\\
&\quad +2\int_{\sn}(h\log \tilde{\phi})^{2}\frac{|\nabla U(F(\xi))|}{\kappa}d \xi\\
&\quad-\int_{\sn}\sum_{i}|\nabla U(F(\xi))|^{2}c_{ii}(h\log \tilde{\phi})^{2}d \xi-\frac{(\int_{\sn}h \log \tilde{\phi}\frac{ |\nabla U(F(\xi))|^{2}}{\kappa}d \xi)^{2}}{T(h)}\\
&\quad \leq \int_{\sn}\sum_{i,j}(h\log \tilde{\phi})_{j}(c_{ij}|\nabla U(F(\xi))|^{2}(h\log \tilde{\phi})_{i})d \xi.
\end{split}
\end{equation}
Now let $\phi \in C^{\infty}_{e}( \sn)$ and set $h(\xi)\log\tilde{\phi}(\xi)=\phi(\xi)$ for every $\xi\in \sn$. Using $dG^{tor}=\frac{1}{n+2}h d\mu^{tor}$ and along the lines in proving Theorem \ref{TP2}, we can transfer \eqref{Lm67} into
\begin{equation*}
\begin{split}
\label{Up3}
&(n+2)\int_{ \sn}\frac{\phi^{2}}{h^{2}}dG^{tor} -\frac{(n+2)^{2}}{T(\Omega)}\left( \int_{\sn}\frac{\phi}{h} dG^{tor} \right)^{2}+2(n+2)\int_{\sn}\frac{\phi^{3}}{h}\frac{1}{\dot{U}(F(\xi))}d G^{tor}\\
&-2(n+2)\int_{\sn} \frac{\phi^{2}}{h}\frac{(\nabla \dot{U}(F(\xi))\cdot \xi)}{\dot{U}(F(\xi))}d G^{tor}-(n+2)\int_{\sn}{\rm tr}(Q^{-1}(h))\frac{\phi^{2}}{h}d G^{tor}\\
&\leq (n+2)\int_{\sn}\frac{1}{h}Q^{-1}(h)\nabla_{\sn} \phi \cdot \nabla_{\sn} \phi d G^{tor}.
\end{split}
\end{equation*}
 The proof is completed.
\end{proof}

A natural consequence of Proposition \ref{TP9o} is as follows.

\begin{prop}\label{TP}
Let $\Omega \in \mathcal{K}^{\infty}_{+,e}$  and $U$ be the solution of \eqref{Tor2} in $\Omega$. \eqref{PII} for $\Omega$ is equivalent to the assertion that for $\psi\in C^{\infty}_{e}(\partial \Omega)$,
\begin{equation}
\begin{split}
\label{LP2}
&\int_{\partial \Omega}\frac{\psi^{2}}{|\nabla h^{*}|^{-1}}|\nabla U|^{2}d \mathcal{H}^{n-1} -\frac{1}{T(\Omega)}\left( \int_{\partial \Omega}\psi|\nabla U|^{2}d \mathcal{H}^{n-1} \right)^{2}+2\int_{\partial \Omega}\psi^{2}|\nabla U|d\mathcal{H}^{n-1}\\
&-2\int_{\partial \Omega} \psi^{2}\frac{(\nabla \dot{U}(X)\cdot \nu_{\Omega})}{\dot{U}(X)}|\nabla U|^{2}d \mathcal{H}^{n-1} -\int_{\partial \Omega}{\rm tr}(\Pi_{\partial\Omega})\psi^{2}|\nabla U|^{2}d \mathcal{H}^{n-1}\\
&\leq  \int_{\partial \Omega}((\Pi^{-1}_{\partial\Omega})\nabla_{\partial \Omega} \psi \cdot \nabla_{\partial \Omega} \psi) |\nabla U|^{2}d \mathcal{H}^{n-1},
\end{split}
\end{equation}
where $h^{*}$ is the support function of the polar body of $\Omega$, and $\dot{U}$ satisfies the following equation
\begin{equation*}
\label{Ut}
\left\{
\begin{array}{lr}
\Delta \dot{U}=0, & X\in \Omega, \\
\dot{U}(X)=|\nabla U(X)|\psi,   & X\in  \partial \Omega.
\end{array}\right.
\end{equation*}
\end{prop}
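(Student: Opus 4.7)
\textbf{Proof proposal for Proposition \ref{TP}.} The plan is to deduce \eqref{LP2} from the equivalent spherical statement \eqref{LP228} of Proposition \ref{TP9o} via the substitution $\phi(\xi)=\psi(F(\xi))$ for $\xi\in\sn$, noting that the Gauss map $\nu_\Omega:\partial\Omega\to\sn$ is an even diffeomorphism of class $C^\infty$ between $\partial\Omega$ and $\sn$, so $\psi\in C^\infty_e(\partial\Omega)$ if and only if $\phi\in C^\infty_e(\sn)$. Thus the equivalence of \eqref{PII} with \eqref{LP2} will follow once each of the six terms in \eqref{LP228} is rewritten as an integral on $\partial\Omega$ under this change of variables, and the resulting identity is multiplied through by $(n+2)$.

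First I would record the pushforward formulas I need: $\int_{\sn}f(\xi)\,dS(\Omega,\xi)=\int_{\partial\Omega}f(\nu_\Omega(X))\,d\mathcal{H}^{n-1}(X)$, together with $d\mu^{tor}=|\nabla U(F(\xi))|^2\,dS(\Omega,\cdot)$ from \eqref{capme*} and $dG^{tor}=\frac{1}{n+2}h\,d\mu^{tor}$ from the definition. Using $h(\nu_\Omega(X))=1/|\nabla h^*(X)|$ and the boundary datum $\dot U(X)=|\nabla U(X)|\psi(X)$ already built into the setup, each of the four ``scalar'' terms in \eqref{LP228} (the ones involving $\phi^2/h^2$, $(\int\phi/h)^2$, $\phi^3/h$, and the $\nabla\dot U\cdot\xi$ factor) converts directly to the matching term of \eqref{LP2}, with a common factor of $1/(n+2)$ that is cleared in the final step.

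The more delicate step is the tensorial one, where I must identify the pair $(\mathrm{tr}(Q^{-1}(h)),\,Q^{-1}(h)\nabla_{\sn}\phi\cdot\nabla_{\sn}\phi)$ with $(\mathrm{tr}(\Pi_{\partial\Omega}),\,\Pi^{-1}_{\partial\Omega}\nabla_{\partial\Omega}\psi\cdot\nabla_{\partial\Omega}\psi)$. For this I would use the relation $Q(h;\xi)=(h_{ij}+h\delta_{ij})=D(\nu^{-1}_\Omega(\xi))$ from Sec. \ref{Sec2}, so that $Q^{-1}(h)=D\nu_\Omega=\Pi_{\partial\Omega}$ under the canonical identification $T_\xi\sn=T_X\partial\Omega=\xi^\perp$. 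The chain rule applied to $\phi=\psi\circ F$ gives $\phi_i=\sum_j w_{ij}\psi_j$ with $w_{ij}=h_{ij}+h\delta_{ij}$; reading this as $\nabla_{\sn}\phi=W\,\nabla_{\partial\Omega}\psi$ and using $W=\Pi^{-1}_{\partial\Omega}$ yields $Q^{-1}(h)\nabla_{\sn}\phi\cdot\nabla_{\sn}\phi=\Pi^{-1}_{\partial\Omega}\nabla_{\partial\Omega}\psi\cdot\nabla_{\partial\Omega}\psi$, which is the content of the tensor identity.

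The main obstacle, should one arise, is precisely this gradient transfer: one must be careful that the two bilinear forms are genuinely pulled back along the Gauss map with the right normalization, because a sign or factor error propagates through the key RHS term. Beyond that, the argument is purely bookkeeping: assemble the six converted terms, multiply the whole inequality by $(n+2)$, and observe that \eqref{LP228} for $\phi$ is then identically \eqref{LP2} for $\psi$. Since the map $\psi\mapsto\phi$ is a bijection of $C^\infty_e(\partial\Omega)$ onto $C^\infty_e(\sn)$, equivalence goes in both directions, completing the proof.
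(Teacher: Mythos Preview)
Your proposal is correct and follows essentially the same route as the paper: Proposition~\ref{TP} is stated there as ``a natural consequence of Proposition~\ref{TP9o},'' meaning one simply pulls back \eqref{LP228} to $\partial\Omega$ via the Gauss map (exactly the change of variables $\phi=\psi\circ F$ you describe), using the same term-by-term identifications already carried out in the proof of Theorem~\ref{main1} (equations \eqref{a2}--\eqref{a5}). Your treatment of the tensorial step, namely $Q^{-1}(h)\nabla_{\sn}\phi\cdot\nabla_{\sn}\phi=\Pi^{-1}_{\partial\Omega}\nabla_{\partial\Omega}\psi\cdot\nabla_{\partial\Omega}\psi$ via $W=\Pi^{-1}_{\partial\Omega}$ and $\nabla_{\sn}\phi=W\nabla_{\partial\Omega}\psi$, is precisely the content of \eqref{a5}.
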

\begin{rem}

In \eqref{aq6wl}, set $\hat{\phi}:=h^{1-p}\phi^{p}$,  and apply $dG^{tor}=\frac{1}{n+2}h d\mu^{tor}$. Then for $\hat{\phi} \in C^{\infty}_{e}(\sn)$, we get the corresponding infinitesimal  $L_{p}$-Brunn-Minkowski inequality for torsion as follows:
\begin{equation*}
\begin{split}
\label{aq68}
&\left(\frac{p}{T(\Omega)}-\frac{(n+2)}{T(\Omega)}\right)\left(\int_{\sn}\frac{\hat{\phi}}{h} d G^{tor}\right)^{2} +(1-p)\int_{\sn}\frac{\hat{\phi}^{2}}{h^{2}}dG^{tor}\\
&-2\int_{\sn }\frac{\hat{\phi}^{2}}{h}\frac{(\nabla \dot{U}(F(\xi))\cdot \xi)}{\dot{U}(F (\xi))}d G^{tor} +2\int_{\sn}\frac{\hat{\phi}^{3}}{h}\frac{1}{\dot{U}(F(\xi))}dG^{tor}-\int_{\sn}{\rm tr}(Q^{-1}(h))\frac{\hat{\phi}^{2}}{h}d G^{tor}\\
&\leq \int_{\sn}\frac{1}{h}(Q^{-1}(h)\nabla_{\sn} \hat{\phi} \cdot \nabla_{\sn} \hat{\phi}) d G^{tor},
\end{split}
\end{equation*}
where $\dot{U}$ satisfies the following equation
\begin{equation*}
\label{Ut}
\left\{
\begin{array}{lr}
\Delta \dot{U}=0, & X\in \Omega, \\
\dot{U}(X)=|\nabla U(X)|\hat{\phi}(\nu_{\Omega}(X)),   & X\in  \partial \Omega.
\end{array}\right.
\end{equation*}
 By the Cauchy-Schwarz inequality,  one sees
\begin{equation*}
\begin{split}
\label{Up3}
&p\int_{\sn}\frac{\hat{\phi}^{2}}{h^{2}}dG^{tor}-\frac{p}{T(\Omega)}\left( \int_{\sn}\frac{\hat{\phi}}{h} dG^{tor} \right)^{2}\\
&=pT(\Omega)\left [\int_{\sn}\frac{\hat{\phi}^{2}}{h^{2}}d\bar{G}^{tor}-\left( \int_{\sn}\frac{\hat{\phi}}{h} d\bar{G}^{tor} \right)^{2} \right]\geq 0.
\end{split}
\end{equation*}
This implies that \eqref{LP228} is indeed a strengthening of \eqref{aq6wl}.
\end{rem}

The following lemmas are essential in proving Theorems \ref{main5} and \ref{main6}.

\begin{lem}\label{LQ7p}
Suppose $0<p< 1$. If for every one-parameter family $\Pi(1,\phi,I)$ as defined in \eqref{phic} with $h=1$ and even $\phi$ such that $\Omega_{t}\in \Pi(1,\phi,I)$,
\begin{equation}\label{lpq76o}
\frac{d^{2}}{dt^{2}}[T(\Omega_{t})]\Bigg|_{t=0}T(B^{n})< \frac{n+2-p}{n+2}\left( \frac{d}{dt}[T(\Omega_{t})]\Bigg|_{t=0}\right)^{2},
\end{equation}
 then for any $\Omega_{1}, \Omega_{2}\in \Pi(1,\phi,I_{0})$ where $I_{0}=[-\varepsilon_{0}, \varepsilon_{0}]\subseteq I$ with $\varepsilon_{0}>0$, inequality \eqref{LPN} holds for $\Omega_{1}, \Omega_{2}$.
\end{lem}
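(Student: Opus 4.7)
The plan is to reduce \eqref{LPN} on the family $\Pi(1,\phi,I_{0})$ to a one-variable concavity statement and then propagate the strict infinitesimal inequality at $B^{n}$ to a neighborhood by continuity.

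The starting point is a structural observation about $L_{p}$-sums within the family: if $\Omega_{1},\Omega_{2}\in\Pi(1,\phi,I_{0})$ have support functions $(1+s_{i}\phi^{p})^{1/p}$ for $i=1,2$, then
$$t\,h_{\Omega_{1}}^{p}+(1-t)\,h_{\Omega_{2}}^{p}=1+\bigl(ts_{1}+(1-t)s_{2}\bigr)\phi^{p},$$
so $t\cdot\Omega_{1}+_{p}(1-t)\cdot\Omega_{2}$ again belongs to $\Pi(1,\phi,I_{0})$ whenever $ts_{1}+(1-t)s_{2}\in I_{0}$, which holds for every $t\in[0,1]$ by convexity of $I_{0}$. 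Setting
$$G(s):=T\bigl((1+s\phi^{p})^{1/p}\bigr)^{\frac{p}{n+2}},\qquad s\in I_{0},$$
the target inequality \eqref{LPN} for $\Omega_{1},\Omega_{2}$ is exactly the concavity statement $G(ts_{1}+(1-t)s_{2})\geq tG(s_{1})+(1-t)G(s_{2})$, so it suffices to establish $G''(s)\leq 0$ on $[-\varepsilon_{0},\varepsilon_{0}]$.

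Writing $f(s)=T((1+s\phi^{p})^{1/p})$ and differentiating, one finds
$$G''(s)=\frac{p}{n+2}\,f(s)^{\frac{p}{n+2}-2}\,\Psi(s),\qquad \Psi(s):=f(s)f''(s)-\tfrac{n+2-p}{n+2}(f'(s))^{2},$$
so $G''(s)\leq 0$ is equivalent to $\Psi(s)\leq 0$. At $s=0$ the body with support function $(1+s\phi^{p})^{1/p}$ reduces to $B^{n}$, and $f(0),f'(0),f''(0)$ are respectively $T(B^{n})$ and the first and second derivatives at $t=0$ of $T(\Omega_{t})$ along the family $\Pi(1,\phi,I)$ appearing in the hypothesis; hence \eqref{lpq76o} is exactly the strict inequality $\Psi(0)<0$.

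The remaining task is to upgrade $\Psi(0)<0$ to $\Psi(s)\leq 0$ on some closed interval $[-\varepsilon_{0},\varepsilon_{0}]$ by continuity. Applying Lemma \ref{fsd5} with base support function $(1+s\phi^{p})^{1/p}$ and perturbation direction $\phi$ gives explicit integral formulas over $\sn$ for $f'(s)$ and $f''(s)$ in terms of $h_{s}$, its covariant derivatives, the torsion potential $U_{s}$ on $\Omega_{s}$ from \eqref{Tor2}, and the auxiliary function $\dot{U}_{s}$ from Lemma \ref{arv}. Since $h_{s}$ depends smoothly on $s$ and $\partial\Omega_{s}$ lies uniformly $C^{\infty}$-close to $\partial B^{n}$ for $|s|$ small, standard Schauder estimates for the linear Dirichlet problems governing $U_{s}$ and $\dot{U}_{s}$ give continuous dependence (in $C^{2,\alpha}(\overline{\Omega_{s}})$) on $s$, and therefore continuity of $\Psi$. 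The main technical obstacle is precisely this continuity step, because the Dirichlet data for $\dot{U}_{s}$ involve $|\nabla U_{s}|$ restricted to $\partial\Omega_{s}$, so one must chain regularity through two linear elliptic boundary value problems; once it is established, $\Psi(s)\leq 0$ on some $[-\varepsilon_{0},\varepsilon_{0}]$ follows at once and the proof is complete.
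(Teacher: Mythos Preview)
Your proposal is correct and follows essentially the same route as the paper: reduce \eqref{LPN} on $\Pi(1,\phi,I_0)$ to concavity of the single-variable function $s\mapsto T((1+s\phi^{p})^{1/p})^{p/(n+2)}$, and obtain this from the strict negativity of its second derivative at $s=0$ via continuity on a small interval. If anything, you are more explicit than the paper about the continuity step (invoking Schauder estimates for $U_{s}$ and $\dot{U}_{s}$), whereas the paper simply asserts that the second derivative remains non-positive at nearby points $t_{0}$ by rebasing the family at $\hat{1}=(1+t_{0}\phi^{p})^{1/p}$.
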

\begin{proof}
Suppose that for $\Pi(1,\phi,I)$ as defined in \eqref{phic} with $h=1$ and even $\phi$, the function $T(\Omega_{t})^{\frac{p}{n+2}} $ has negative second derivative at $t=0$, i.e., \eqref{lpq76o} holds. Now, given $t_{0}$ in the interior of $I$, consider $\hat{1}=( 1+t_{0}\phi^{p})^{1/p}$, and set a new parameter system $\Pi(\hat{1},\phi,\hat{I})$, where $\hat{I}$ is a new interval such that $(\hat{1}^{p}+t\phi^{p})^{1/p}=(1+(t+t_0)\phi^{p})^{1/p}\in \mathcal{S}$ for every $t\in \hat{I}$. By \eqref{lpq76o}, one sees that the second derivative of $ T(\Omega_{t})^{\frac{p}{n+2}}$ at $t=t_{0}$ is non-positive, which is equivalent to the non-positivity of the second derivative of  $T^{\frac{p}{n+2}}(\hat{\Omega}_{t})$ at $t=0$ with $\hat{\Omega }_{t}\in \Pi(\hat{1},\phi,\hat{I})$, i.e.,
\begin{equation}\label{TUW}
 \frac{d^{2}}{dt^{2}}\Big|_{t=0}T(\Omega+_{p}t\cdot \phi)^{\frac{p}{n+2}} \leq 0
 \end{equation}
 holds for all $\Omega \in \Pi(1,\phi, I_{0})$ with $I_{0}\subseteq I$. Then for all $\Omega_{1}, \Omega_{2}\in \Pi(1,\phi,I_{0})$, the global concavity of $[0,1]\ni s\mapsto T((1-s)\cdot \Omega_{1}+_{p}s\cdot \Omega_{2})^{\frac{p}{n+2}}$ follows by testing its second derivative at a given $s\in [0,1]$, which is non-positive by \eqref{TUW} for the body $\Omega= (1-s)\cdot \Omega_{1}+_{p}s\cdot \Omega_{2}$ and the function $\phi^{p}:=(h^{p}_{\Omega_{2}}-h^{p}_{\Omega_{1}})\in C^{\infty}_{e}(\sn)$. So, we assert that this lemma holds.
\end{proof}

Let $h\in \mathcal{S}$ with $h>0$ be the support function of a convex body $\Omega\in \koo$ of class $C^{\infty}_{+}$, and let $\tilde{\phi}\in C^{\infty}(\sn)$ with $\tilde{\phi}>0$, then there exists a sufficiently small $\varepsilon>0$ such that
\[
h_{t}:=h\tilde{\phi}^{t}\in \mathcal{S}, \ \forall t\in [-\varepsilon,\varepsilon].
\]
For an interval $I:=[-\varepsilon,\varepsilon]$, we set the one-parameter family of convex bodies $\Omega_{t}\in \koo$ of class $C^{\infty}_{+}$:
\begin{equation}\label{Pid}
\Pi(h,\tilde{\phi},I)=\{ \Omega_{t}: h_{\Omega_{t}}= h_{t}= h\tilde{\phi}^{t}, t\in I \}.
\end{equation}

Similar to Lemma \ref{LQ7p}, we have the following lemma.
\begin{lem}\label{LE}

 If for every one-parameter family $\Pi(1,\tilde{\phi},I)$ as defined in \eqref{Pid} with $h=1$ and even $\tilde{\phi}$ such that $\Omega_{t}\in \Pi(1,\tilde{\phi},I)$,
\begin{equation}\label{lpq76g}
\frac{d^{2}}{dt^{2}}[\log T(\Omega_{t})]\Bigg|_{t=0}< 0,
\end{equation}
 then for any $\Omega_{1}, \Omega_{2}\in\Pi(1,\tilde{\phi},I_{0})$ where $I_{0}=[-\varepsilon_{0},\varepsilon_{0}]\subseteq I$ with $\varepsilon_{0}>0$, inequality \eqref{LPN2} holds for $\Omega_{1}, \Omega_{2}$.
\end{lem}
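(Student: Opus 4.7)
The strategy follows Lemma \ref{LQ7p}. The plan is to reduce the log-Brunn-Minkowski inequality \eqref{LPN2} for bodies in $\Pi(1,\tilde\phi,I_0)$ to concavity of the scalar function $F(r):=\log T(\Omega_r)$, where $\Omega_r$ denotes the body in the family with support function $\tilde\phi^r$, and then to derive this concavity on a small interval $I_0=[-\varepsilon_0,\varepsilon_0]$ from the hypothesis \eqref{lpq76g} by a continuity argument.

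First I would choose $\varepsilon_0>0$ small enough that $\tilde\phi^r\in\mathcal{S}$ for every $r\in[-\varepsilon_0,\varepsilon_0]$. For any $s_1,s_2\in I_0$ and $t\in[0,1]$, the function $h_{\Omega_{s_1}}^{\,t}\,h_{\Omega_{s_2}}^{1-t}=\tilde\phi^{ts_1+(1-t)s_2}$ is itself a support function of a body in the family, so the definition of the $L_0$-sum yields
\[
t\cdot\Omega_{s_1}+_0(1-t)\cdot\Omega_{s_2}=\Omega_{ts_1+(1-t)s_2}.
\]
Consequently \eqref{LPN2} is equivalent to $F(ts_1+(1-t)s_2)\ge tF(s_1)+(1-t)F(s_2)$, i.e.\ to concavity of $F$ on $I_0$.

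Next, to verify $F''(r_0)\le 0$ at an arbitrary $r_0\in I_0$, I would reparametrize the family at $\Omega_{r_0}$: setting $\hat h:=\tilde\phi^{r_0}$ and $\hat\Omega_s:=\Omega_{r_0+s}$, the new one-parameter family $\{\hat\Omega_s\}$ belongs to the class $\Pi(\hat h,\tilde\phi,\hat I)$ with base body $\Omega_{r_0}$, and
\[
F''(r_0)=\left.\frac{d^2}{ds^2}\right|_{s=0}\log T(\hat\Omega_s).
\]
The analogue of Lemma \ref{fsd7} applied at the base $\hat h$ (its derivation adapts verbatim using the same Hadamard-type variational calculations) expresses this second variation as an integral functional of $\hat h$, $\tilde\phi$, and the solution of \eqref{Tor2} on $\Omega_{r_0}$ together with its first perturbation $\dot U$. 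By standard Schauder regularity and the smooth dependence of the Dirichlet problem \eqref{Tor2} on the domain, this functional is continuous in $r_0$ near $0$. Since \eqref{lpq76g} gives $F''(0)<0$ strictly, shrinking $\varepsilon_0$ if necessary ensures $F''(r_0)\le 0$ throughout $I_0$, which proves the concavity of $F$ and hence \eqref{LPN2}.

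The principal obstacle is the continuous dependence of the second variation on the base body, which requires uniform control of the perturbed potential $\dot U$ and of the inverse matrix $Q^{-1}(\hat h)$ as the domain varies in a $C^{\infty}$-neighborhood of $B^n$. Verifying that every term in the analogue of Lemma \ref{fsd7} depends continuously in a topology strong enough to pass under the integral on $\sn$ is the technical core of the argument; this is made possible by the smoothness of $\tilde\phi$ and the $C^{\infty}$-regularity of the unit ball, which together ensure that the family $\Pi(\hat h,\tilde\phi,\hat I)$ is well-defined on a uniform $s$-interval as $r_0$ ranges over $I_0$.
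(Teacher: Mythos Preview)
Your proposal is correct and follows essentially the same route as the paper, which simply states that Lemma \ref{LE} is proved ``similar to Lemma \ref{LQ7p}''. You carry out the reparametrization argument from Lemma \ref{LQ7p} in the $p=0$ setting and, in fact, are more explicit than the paper about the key continuity step: the paper's proof of Lemma \ref{LQ7p} passes from the strict inequality at $t=0$ to non-positivity at a nearby $t_0$ without spelling out that this relies on continuous dependence of the second variation on the base body, whereas you identify this as the technical core and point to the ingredients (Schauder estimates, smooth dependence of \eqref{Tor2} on the domain, uniform control of $\dot U$ and $Q^{-1}(\hat h)$) needed to justify it.
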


{\bf Proof of Theorem \ref{main5}}

In the case $0<p<1$: on the one hand, if $\varphi^{p}=constant$, the $L_{p}$-Brunn-Minkowski inequalities for torsion \eqref{LPN}  hold in the case of Euclidean balls. On the other hand, if $\varphi^{p}\neq constant$, by Lemma \ref{LQ7p} and Proposition \ref{ppo2}, we conclude that to prove \eqref{LPN}, it suffices to verify the (strict) validity of \eqref{aq9q} in the case that $\Omega$ is the unit ball $B^{n}$. Then, $\partial \Omega=\sn$, $T(B^{n})=\frac{\omega_{n}}{n(n+2)}$, $|\nabla h^{*}(X)|=1$, $\nu_{\Omega}$ is identity map, and $|\nabla U(X)|=\frac{1}{n}$ for $X\in \partial \Omega$. In such a case, we verify that the following theorem holds.

\begin{theo}\label{TLP}
Suppose $n\geq 2$ and $p>0$. Let $\varphi^{p} \in C^{\infty}(\sn)$ be even with $\varphi^p\neq constant$. Then, the following inequality
\begin{equation}
\begin{split}
\label{bH0}
& \frac{(p-2-n)}{n\omega_{n}}\left( \int_{\sn} \varphi^{p}  d \mathcal{H}^{n-1} \right)^{2}+(n+2-p)\int_{\sn}\varphi^{2p}d \mathcal{H}^{n-1}\\
&-2pn\int_{\sn}\varphi^{p}(\nabla \dot{U}(\xi)\cdot\xi) d \mathcal{H}^{n-1}< \int_{\sn}\nabla_{\sn} \varphi^{p}\cdot \nabla_{\sn} \varphi^{p}d \mathcal{H}^{n-1}
\end{split}
\end{equation}
holds, where
$\dot{U}$ satisfies the following equation
\begin{equation}
\label{Ut0}
\left\{
\begin{array}{lr}
\Delta\dot{U}=0, &\xi\in B^{n}, \\
\dot{U}(\xi)=\frac{\varphi^p}{pn},   & \xi\in \sn.
\end{array}\right.
\end{equation}
\end{theo}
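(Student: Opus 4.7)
\textbf{Proof plan for Theorem \ref{TLP}.} My strategy is a spherical harmonic decomposition. Set $\psi:=\varphi^{p}\in C^{\infty}(\sn)$; since $\psi$ is even, its $L^{2}(\sn)$-expansion $\psi=\sum_{k\geq 0}\psi_{k}$ into spherical harmonics $\psi_{k}$ of degree $k$ has only \emph{even-index} terms, $k\in\{0,2,4,\dots\}$. Separation of variables in the Dirichlet problem \eqref{Ut0} on $B^{n}$ gives the explicit harmonic extension $\dot{U}(r\xi)=\frac{1}{pn}\sum_{k}r^{k}\psi_{k}(\xi)$, so on $\sn$ one has
\begin{equation*}
\nabla\dot{U}(\xi)\cdot\xi=\partial_{r}\dot{U}\bigr|_{r=1}=\frac{1}{pn}\sum_{k}k\,\psi_{k}(\xi).
\end{equation*}

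Plugging this into \eqref{bH0} and using $|\sn|=n\omega_{n}$, $L^{2}$-orthogonality of spherical harmonics of different degrees (Parseval), together with the eigenvalue identity $-\Delta_{\sn}\psi_{k}=k(k+n-2)\psi_{k}$, each of the four integrals in \eqref{bH0} becomes a Fourier sum over $k$. The zeroth mode is harmless: the first integral contributes $(p-2-n)n\omega_{n}\psi_{0}^{2}$, the $k=0$ piece of the second contributes $(n+2-p)n\omega_{n}\psi_{0}^{2}$, and the third and fourth vanish at $k=0$ (both carry an explicit factor $k$); the two surviving $k=0$ contributions cancel exactly. Collecting the remaining even modes shows that \eqref{bH0} is equivalent to the purely spectral statement
\begin{equation*}
\sum_{\substack{k\geq 2 \\ k\text{ even}}}\bigl[(n+2-p)-2k\bigr]\,\|\psi_{k}\|_{L^{2}(\sn)}^{2}\;<\;\sum_{\substack{k\geq 2 \\ k\text{ even}}}k(k+n-2)\,\|\psi_{k}\|_{L^{2}(\sn)}^{2}.
\end{equation*}

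The final step is a mode-by-mode numerical check. At index $k$ the difference of the right-hand and left-hand coefficients equals
\begin{equation*}
k(k+n-2)-\bigl[(n+2-p)-2k\bigr]=k(k+n)-(n+2)+p,
\end{equation*}
which at the smallest admissible mode $k=2$ equals $2(n+2)-(n+2)+p=n+2+p>0$ and is strictly increasing in $k$. Since $\varphi^{p}\not\equiv\mathrm{const}$ forces at least one $\psi_{k}$ with $k\geq 2$ to be nonzero, summing against $\|\psi_{k}\|_{L^{2}(\sn)}^{2}$ yields the desired strict inequality. The one subtlety, and the sole place where the parity hypothesis matters, is that for $k=1$ the analogous difference reduces to $p-1$, which is \emph{negative} in the range $0<p<1$ relevant to the application; a nonzero first harmonic (corresponding to an infinitesimal translation of $B^{n}$) would therefore reverse the inequality, and the evenness of $\varphi^{p}$ is precisely what excludes $k=1$ from the spectrum and lets the argument close with room to spare.
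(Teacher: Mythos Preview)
Your proposal is correct and follows essentially the same spectral strategy as the paper: both reduce \eqref{bH0} to a statement about the modes $k\geq 2$ of the spherical-harmonic expansion of $\varphi^{p}$, exploiting evenness to eliminate the dangerous $k=1$ mode, and both hinge on the fact that the first nontrivial even eigenvalue $2n$ of $-\Delta_{\sn}$ exceeds $n+2-p$. The only technical difference is in the treatment of the $\dot{U}$ term: the paper converts $2pn\int_{\sn}\varphi^{p}(\nabla\dot{U}\cdot\xi)$ via the divergence theorem into the nonnegative Dirichlet energy $2(pn)^{2}\int_{B^{n}}|\nabla\dot{U}|^{2}$ and simply discards it, whereas you compute the Dirichlet-to-Neumann map explicitly and retain the resulting $2k$ contribution in the mode-by-mode estimate---a sharper but not materially different route.
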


\begin{proof}
By using \eqref{Ut0} and the divergence theorem, there holds
\begin{equation}
\begin{split}
\label{iu0y}
&2pn\int_{\sn}\varphi^{p}(\nabla \dot{U}(\xi)\cdot \xi)d \mathcal{H}^{n-1}\\
&=2(pn)^{2}\int_{\sn}\dot{U}(\xi)\nabla \dot{U}(\xi)\cdot \xi d \mathcal{H}^{n-1}\\
&=2(pn)^{2}\int_{B^{n}}{\rm div}(\dot{U}\nabla \dot{U})d\xi\\
&=2(pn)^{2}\int_{B^{n}}|\nabla \dot{U}|^{2}d\xi.
\end{split}
\end{equation}
Substituting \eqref{iu0y} into \eqref{bH0}, hence the validity of \eqref{bH0}  is to prove
\begin{equation}
\begin{split}
\label{bH80}
& \frac{(p-2-n)}{n\omega_{n}}\left( \int_{\sn} \varphi^{p}  d \mathcal{H}^{n-1} \right)^{2}+ (n+2-p)\int_{\sn}\varphi^{2p}d \mathcal{H}^{n-1}\\
&<\int_{\sn}\nabla_{\sn} \varphi^{p}\cdot \nabla_{\sn} \varphi^{p}d \mathcal{H}^{n-1}+2(pn)^{2}\int_{B^{n}}|\nabla \dot{U}|^{2} d \xi.
\end{split}
\end{equation}
By direct computation, the validity of \eqref{bH0} is invariant under $\varphi^{p}\mapsto \varphi^{p}+c$ for a constant $c$. Consequently, \eqref{bH80} is equivalent to
\begin{equation}\label{hu}
\begin{array}{lr}
\int_{\sn}\varphi^{p}d\mathcal{H}^{n-1}=0 \quad \Rightarrow  \\
\quad \quad (n+2-p)\int_{\sn}\varphi^{2p}d \mathcal{H}^{n-1}< \int_{\sn}|\nabla_{\sn} \varphi^{p}|^{2}d \mathcal{H}^{n-1}+2(pn)^{2}\int_{B^{n}}|\nabla \dot{U}|^{2} d \xi.
\end{array}
\end{equation}
Now, we assume $\int_{\sn}\varphi^{p}d \mathcal{H}^{n-1}=0$. Denote by $\Delta_{\sn}$ the spherical Laplace operator on $\sn$. The first eigenvalue of $(-\Delta_{\sn})$ is $0$, and the corresponding eigenspace is constant functions. Thus, the zero-mean condition on $\varphi^{p}$ illustrates that $\varphi^{p}$ is orthogonal to such eigenspace. The second eigenvalue of $(-\Delta_{\sn})$ is $n-1$, and the corresponding eigenspace consists of the restriction of linear functions of $\rnnn$ to $\sn$, since each of them is odd and $\varphi^{p}$ is even, $\varphi^{p}$ is orthogonal to this eigenspace as well. The third eigenvalue is $2n$, then by the divergence theorem on $\sn$, one sees that
\begin{equation}
\begin{split}
\label{iu0}
\int_{\sn}|\nabla_{\sn}\varphi^{p}|^{2}d \mathcal{H}^{n-1}=-\int_{\sn}\varphi^{p} \Delta_{\sn}\varphi^{p} d \mathcal{H}^{n-1}\geq 2n\int_{\sn}\varphi^{2p}d\mathcal{H}^{n-1}.
\end{split}
\end{equation}
Substituting \eqref{iu0} into \eqref{hu}, we conclude that Theorem \ref{TLP} holds. Hence, the proof of Theorem \ref{main5} is completed.
\end{proof}

{\bf Proof of Theorem \ref{main6}}

In the case $p=0$:  similarly, on the one hand, if $\phi=constant$, the log-Brunn-Minkowski inequality for torsion \eqref{LPN2}  holds in the case of Euclidean balls. On the other hand, if $\phi\neq constant$, using Lemma \ref{LE} and Proposition \ref{TP}, we conclude that to prove \eqref{LPN2}, it suffices to verify the (strict) validity of \eqref{LP2} in the case that $\Omega$ is the unit ball $B^{n}$. In this regard, we prove the following theorem.

\begin{theo}\label{TLOG0} Suppose $n\geq 2$. Let $\phi\in C^{\infty}(\sn)$ be even with $\phi\neq constant$, then the following inequality
\begin{equation}
\begin{split}
\label{bqv}
&(n+2)\int_{\sn}\phi^{2}d \mathcal{H}^{n-1} -\frac{(n+2)}{n\omega_{n}}\left(\int_{\sn}\phi d \mathcal{H}^{n-1}\right)^{2}\\
&\quad -2 \int_{\sn }\phi^{2} \frac{\nabla \dot{U}\cdot \xi}{\dot{U}} d \mathcal{H}^{n-1}< \int_{\sn}|\nabla_{\sn} \phi|^{2}d \mathcal{H}^{n-1}
\end{split}
\end{equation}
holds, where $\dot{U}$ satisfies the following equation
\begin{equation}
\label{Utv}
\left\{
\begin{array}{lr}
\Delta \dot{U}=0, & \xi\in B^{n}, \\
\dot{U}(\xi)=\frac{1}{n}\phi,   & \xi\in  \sn.
\end{array}\right.
\end{equation}
\end{theo}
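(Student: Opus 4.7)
\textbf{Proof plan for Theorem \ref{TLOG0}.} The first step is to rewrite the term involving $\dot U$ in a more tractable form. Since $\dot{U}=\phi/n$ on $\sn$, we have $\phi^{2}/\dot{U}=n\phi=n^{2}\dot{U}$ on $\sn$, so that
\[
\int_{\sn}\phi^{2}\frac{\nabla\dot{U}\cdot\xi}{\dot{U}}d\mathcal{H}^{n-1}
=n^{2}\int_{\sn}\dot{U}(\nabla\dot{U}\cdot\xi)d\mathcal{H}^{n-1}
=n^{2}\int_{B^{n}}\mathrm{div}(\dot{U}\nabla\dot{U})dX
=n^{2}\int_{B^{n}}|\nabla\dot{U}|^{2}dX,
\]
where the last equality uses $\Delta\dot{U}=0$. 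Thus the claim \eqref{bqv} is equivalent to
\[
(n+2)\int_{\sn}\phi^{2}d\mathcal{H}^{n-1}-\frac{n+2}{n\omega_{n}}\Bigl(\int_{\sn}\phi\, d\mathcal{H}^{n-1}\Bigr)^{2}-2n^{2}\int_{B^{n}}|\nabla\dot{U}|^{2}dX<\int_{\sn}|\nabla_{\sn}\phi|^{2}d\mathcal{H}^{n-1}.
\]

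Next I would observe that both sides of this reformulated inequality are invariant under the shift $\phi\mapsto\phi+c$ (for constants $c$), so we may normalise $\int_{\sn}\phi\, d\mathcal{H}^{n-1}=0$ and kill the quadratic correction term. The inequality then reduces to
\[
(n+2)\int_{\sn}\phi^{2}d\mathcal{H}^{n-1}-2n^{2}\int_{B^{n}}|\nabla\dot{U}|^{2}dX<\int_{\sn}|\nabla_{\sn}\phi|^{2}d\mathcal{H}^{n-1}.
\]

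The remaining step is to decompose $\phi$ into spherical harmonics. Since $\phi$ is even with zero mean, we may write $\phi=\sum_{k\geq 2,\,k\text{ even}}Y_{k}$, where $Y_{k}$ is the degree-$k$ component. The harmonic extension is $\dot{U}(r\xi)=\frac{1}{n}\sum_{k}r^{k}Y_{k}(\xi)$, and by orthogonality,
\[
\int_{\sn}\phi^{2}=\sum_{k}\|Y_{k}\|^{2},\quad \int_{\sn}|\nabla_{\sn}\phi|^{2}=\sum_{k}k(k+n-2)\|Y_{k}\|^{2},\quad \int_{B^{n}}|\nabla\dot{U}|^{2}dX=\frac{1}{n^{2}}\sum_{k}k\|Y_{k}\|^{2}.
\]
The last identity follows from $\int_{B^{n}}|\nabla\dot{U}|^{2}=\int_{\sn}\dot{U}\,\partial_{r}\dot{U}\, d\mathcal{H}^{n-1}$ combined with the explicit form of the harmonic extension.

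Substituting these three expressions, the inequality becomes $\sum_{k}[k^{2}+nk-(n+2)]\|Y_{k}\|^{2}>0$. For each even $k\geq 2$ the coefficient $k^{2}+nk-(n+2)$ is strictly positive (at $k=2$ it equals $n+2$, and it is increasing in $k$), and since $\phi$ is not constant at least one $Y_{k}$ is nonzero, so strict inequality holds. I anticipate no serious obstacle: the main subtlety is verifying the shift invariance cleanly, which should fall out of $\int_{\sn}\nabla\dot{U}\cdot\xi\, d\mathcal{H}^{n-1}=0$; after that the argument is essentially a diagonalisation of the Dirichlet-to-Neumann operator on $\sn$ against the Laplace-Beltrami spectrum.
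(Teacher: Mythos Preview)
Your proposal is correct and follows essentially the same route as the paper: rewrite the $\dot{U}$-term via the divergence theorem, normalise to mean zero using shift invariance, and then exploit the spectral gap on $\sn$ for even mean-zero functions. The only difference is cosmetic --- the paper invokes the bound $\int_{\sn}|\nabla_{\sn}\phi|^{2}\geq 2n\int_{\sn}\phi^{2}$ directly and keeps the Dirichlet energy $\int_{B^{n}}|\nabla\dot{U}|^{2}$ only to secure strictness, whereas you carry out the full spherical-harmonic diagonalisation, which is slightly more explicit but not materially different.
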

\begin{proof}
By \eqref{Utv} and divergence theorem, there is
\begin{equation*}
\begin{split}
\label{Up3}
2\int_{\sn }\phi^{2} \frac{\nabla \dot{U}\cdot \xi}{\dot{U}} d \mathcal{H}^{n-1}= 2n^{2}\int_{\sn }\dot{U}(\nabla \dot{U} \cdot \xi)d \mathcal{H}^{n-1}=2n^{2}\int_{B^{n}}|\nabla \dot{U}|^{2}d\xi.
\end{split}
\end{equation*}
Then the validity of \eqref{bqv} is to verify
\begin{equation}
\begin{split}
\label{bq00}
&(n+2)\int_{\sn}\phi^{2}d \mathcal{H}^{n-1}-\frac{(n+2)}{n\omega_{n}}\left(\int_{\sn}\phi d \mathcal{H}^{n-1}\right)^{2}\\
 &<2n^{2}\int_{B^{n}}|\nabla \dot{U}|^{2}d\xi+\int_{\sn}|\nabla_{\sn} \phi|^{2}d \mathcal{H}^{n-1}.
\end{split}
\end{equation}
By direct computation, the validity of \eqref{bqv} is invariant under $\phi\mapsto \phi +c$ for a constant $c$. Consequently, \eqref{bq00} is equivalent to
\begin{equation}\label{hu2}
\int_{\sn}\phi d\mathcal{H}^{n-1}=0\Rightarrow (n+2)\int_{\sn}\phi^{2}d \mathcal{H}^{n-1}< 2n^{2}\int_{B^{n}}|\nabla \dot{U}|^{2}d\xi+ \int_{\sn}|\nabla_{\sn} \phi|^{2}d \mathcal{H}^{n-1}.
\end{equation}
Now, suppose $\int_{\sn}\phi d \mathcal{H}^{n-1}=0$. Similarly, based on the zero-mean condition and even assumption on $\phi$, then
\begin{equation}
\begin{split}
\label{iuy0}
\int_{\sn}|\nabla_{\sn}\phi|^{2}d \mathcal{H}^{n-1}=-\int_{\sn}\phi \Delta_{\sn}\phi d \mathcal{H}^{n-1}\geq 2n\int_{\sn}\phi^{2}d\mathcal{H}^{n-1}.
\end{split}
\end{equation}
Substituting \eqref{iuy0} into \eqref{hu2}, we conclude that Theorem \ref{TLOG0} holds. Hence, Theorem \ref{main6} is completely proved.

\end{proof}

Following \cite[Lemma 3.1, Lemma 3.2]{BLZ12}, as a consequence of Theorems \ref{main5} and \ref{main6},  we obtain the following $L_{p}$-Minkowski inequalities for torsion near the unit ball with $0\leq p<1$.
\begin{coro}
Suppose $n\geq 2$. Let $\varphi\in C^{\infty}(\sn)$ be even. Then there exists a sufficiently  small $\varepsilon_{0}>0$ such that for every $s_{1},s_{2}\in [-\varepsilon_{0},\varepsilon_{0}]$, the $L_{p}$-Minkowski inequalities for torsion for $0< p<1$ hold true:
\begin{equation*}\label{CLMP}
T_{p}(\Omega_{1},\Omega_{2})^{n+2}\geq T(\Omega_{1})^{n+2-p}T(\Omega_{2})^{p},
\end{equation*}
where $\Omega_{1}$ is the convex body with the support function $h_{1}=(1+s_{1}\varphi)^{\frac{1}{p}}$ and $\Omega_{2}$ is the convex body with the support function $h_{2}=(1+s_{2}\varphi)^{\frac{1}{p}}$.
\end{coro}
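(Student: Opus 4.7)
The corollary is the standard consequence of the $L_p$-Brunn-Minkowski inequality via differentiation of the $L_p$ combination curve at its left endpoint, in the spirit of Lemmas 3.1--3.2 of~\cite{BLZ12}. I would apply Theorem~\ref{main5} to the one-parameter family joining $\Omega_1$ and $\Omega_2$, and then extract the $L_p$-Minkowski inequality from the resulting one-sided derivative inequality.

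First I verify that the curve stays inside the regime of Theorem~\ref{main5}. For $\Omega_i$ with $h_i=(1+s_i\varphi)^{1/p}$ and $s_i\in[-\varepsilon_0,\varepsilon_0]$, the $L_p$-sum $\Omega_t:=(1-t)\cdot\Omega_1+_p t\cdot\Omega_2$ has support function
\[
\bigl((1-t)h_1^p+t h_2^p\bigr)^{1/p}=\bigl(1+s(t)\varphi\bigr)^{1/p},\qquad s(t):=(1-t)s_1+ts_2\in[-\varepsilon_0,\varepsilon_0],
\]
so every $\Omega_t$ lies in the family of Theorem~\ref{main5} for $t\in[0,1]$, with $\Omega_{t=0}=\Omega_1$ and $\Omega_{t=1}=\Omega_2$.

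Next, set $F(t):=T(\Omega_t)^{p/(n+2)}$. Theorem~\ref{main5} (applied with an obvious relabelling of the parameters) gives $F(t)\geq(1-t)F(0)+tF(1)$ for $t\in[0,1]$. Dividing by $t$ and sending $t\to 0^+$ yields $F'(0^+)\geq F(1)-F(0)$. The left side is evaluated by the chain rule together with the first-order formula \eqref{cz1} of Lemma~\ref{fsd5}, applied with base $h=h_1$ and test $\phi^p=h_2^p-h_1^p$ (which is smooth, and whose sign is irrelevant to first order):
\[
\frac{d}{dt}\Big|_{t=0}T(\Omega_t)=\frac{1}{p}\int_{\sn}(h_2^p-h_1^p)\,d\mu^{tor}_p(\Omega_1,\cdot)=\frac{n+2}{p}\bigl(T_p(\Omega_1,\Omega_2)-T(\Omega_1)\bigr),
\]
where I use $d\mu^{tor}_p(\Omega,\cdot)=h_\Omega^{1-p}d\mu^{tor}(\Omega,\cdot)$, the definition of $T_p$, and the self-mixing identity $T_p(\Omega,\Omega)=T(\Omega)$ that is a direct consequence of \eqref{11T}. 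Therefore
\[
F'(0^+)=T(\Omega_1)^{p/(n+2)-1}\bigl(T_p(\Omega_1,\Omega_2)-T(\Omega_1)\bigr),
\]
and plugging into $F'(0^+)\geq F(1)-F(0)$, simplifying, and raising to the $(n+2)$-th power yields $T_p(\Omega_1,\Omega_2)^{n+2}\geq T(\Omega_1)^{n+2-p}T(\Omega_2)^p$.

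The main obstacle is purely bookkeeping: one must match the infinitesimal $L_p$-variation of Lemma~\ref{fsd5}, which is phrased for perturbations of a \emph{single} body $\Omega$ by a test function $\phi^p$, with the two-body $L_p$ combination curve $\Omega_t$, and then convert the resulting integral into the mixed quantity $T_p(\Omega_1,\Omega_2)$ via the self-mixing identity. No additional regularity or analytic work is required beyond what is already guaranteed by Theorem~\ref{main5} and the variational machinery of Section~\ref{Sec3}.
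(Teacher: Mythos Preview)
Your proposal is correct and is precisely the argument the paper has in mind: the paper does not spell out a proof but simply invokes \cite[Lemmas~3.1--3.2]{BLZ12} together with Theorem~\ref{main5}, and what you wrote is exactly the (torsion analogue of the) implication ``$L_p$-Brunn--Minkowski $\Rightarrow$ $L_p$-Minkowski'' from those lemmas, namely concavity of $t\mapsto T(\Omega_t)^{p/(n+2)}$ combined with the first-variation formula \eqref{cz1} and the self-mixing identity $T_p(\Omega,\Omega)=T(\Omega)$. Your observation that $(1-t)h_1^p+th_2^p=1+s(t)\varphi$ with $s(t)\in[-\varepsilon_0,\varepsilon_0]$, so that the Wulff function is already a support function and the whole curve $\Omega_t$ stays in the family covered by Theorem~\ref{main5}, is the one point that needs to be checked and you handle it correctly.
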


\begin{coro}
Suppose $n\geq 2$. Let $\phi\in C^{\infty}(\sn)$ be even. Then there exists a sufficiently small $\varepsilon_{0}>0$ such that for every $s_{1},s_{2}\in [-\varepsilon_{0},\varepsilon_{0}]$, the log-Minkowski inequality for torsion holds true:
\begin{equation*}\label{CLM0}
\int_{\sn}\log\frac{h_{\Omega_{1}}}{h_{\Omega_{2}}}d\bar{G}^{tor}(\Omega_{2},\cdot)\geq \frac{1}{n+2}\log\frac{T(\Omega_{1})}{T(\Omega_{2})},
\end{equation*}
where $\Omega_{1}$ is the convex body with the support function $h_{1}=e^{ s_{1}\phi}$ and $\Omega_{2}$ is the convex body with the support function $h_{2}=e^{ s_{2}\phi}$.
\end{coro}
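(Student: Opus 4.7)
The plan is to derive this log-Minkowski inequality as the first-order infinitesimal consequence of the log-Brunn-Minkowski inequality for torsion established in Theorem \ref{main6}, mirroring the volume argument of \cite[Lemma 3.1]{BLZ12}.

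First I would fix $\phi\in C^{\infty}(\sn)$ even and take $\varepsilon_0>0$ as supplied by Theorem \ref{main6}. For any $s_1,s_2\in[-\varepsilon_0,\varepsilon_0]$ and associated bodies $\Omega_1,\Omega_2$, define
\[
f(t):=T(t\cdot \Omega_1+_0(1-t)\cdot\Omega_2), \qquad g(t):=T(\Omega_1)^t T(\Omega_2)^{1-t}, \qquad t\in[0,1].
\]
Theorem \ref{main6} yields $f(t)\geq g(t)$ on $[0,1]$, with equality at $t=0$ since $f(0)=T(\Omega_2)=g(0)$. Consequently the one-sided derivatives at $0$ satisfy $f'(0^+)\geq g'(0^+)$, provided they exist.

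The right-hand derivative is elementary: $g'(0)=T(\Omega_2)\log(T(\Omega_1)/T(\Omega_2))$. For $f'(0)$ I would invoke the Hadamard-type first-variation formula for torsion. Because $h_t:=h_1^t h_2^{1-t}$ is a smooth one-parameter family of support functions with $\tfrac{d}{dt}h_t\big|_{t=0}=h_2\log(h_1/h_2)\in C^{\infty}(\sn)$, and since for $\varepsilon_0$ small both bodies remain in $\mathcal{K}^{\infty}_{+,e}$, the Colesanti-Fimiani variational formula \eqref{capha*} (compare also Lemma \ref{fsd5} in the limit $p\to 0$) gives
\[
f'(0)=\int_{\sn} h_2\log\frac{h_1}{h_2}\,d\mu^{tor}(\Omega_2,\cdot)=(n+2)\int_{\sn}\log\frac{h_1}{h_2}\,dG^{tor}(\Omega_2,\cdot),
\]
using the identity $dG^{tor}(\Omega_2,\cdot)=\tfrac{1}{n+2}\,h_2\,d\mu^{tor}(\Omega_2,\cdot)$. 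Combining $f'(0)\geq g'(0)$ with these identities and dividing both sides by $(n+2)T(\Omega_2)$, together with $d\bar{G}^{tor}=dG^{tor}/T$, produces exactly the claimed inequality.

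The only subtle point is justifying the first-variation formula for the log-Minkowski deformation $h_t=h_1^th_2^{1-t}$, which is not a Minkowski-sum perturbation in the classical sense. However, Taylor-expanding $h_t=h_2+t\,h_2\log(h_1/h_2)+O(t^2)$ in $C^{2,\alpha}(\sn)$ reduces the computation to a smooth one-parameter perturbation of $h_2$ inside $\mathcal{K}^{\infty}_{+,e}$, so the differentiability argument of \cite{CF10} used in Lemma \ref{arv} (together with the covariant-derivative bookkeeping carried out in Lemma \ref{fsd5}) applies verbatim. This regularity verification is the step I would carry out most carefully, but it is essentially a routine adaptation rather than a genuine obstacle, since the deep input — the log-Brunn-Minkowski inequality itself — has already been supplied by Theorem \ref{main6}.
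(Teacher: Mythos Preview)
Your proposal is correct and matches the paper's approach: the paper simply cites \cite[Lemma~3.1, Lemma~3.2]{BLZ12} and states the corollary as an immediate consequence of Theorem~\ref{main6}, which is exactly the first-order comparison argument you carry out. One minor simplification: in this particular family $h_1^{\,t}h_2^{\,1-t}=e^{(ts_1+(1-t)s_2)\phi}$ is already a genuine support function for all $t\in[0,1]$ (since $ts_1+(1-t)s_2\in[-\varepsilon_0,\varepsilon_0]$), so the Wulff-shape caveat is unnecessary and Lemma~\ref{fsd7} gives $f'(0)$ directly without the Taylor-expansion detour.
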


\section*{Acknowledgment}The author would like to thank Yong Huang, Mohammad N. Ivaki for their helpful and valuable comments on this work.  The author also thanks the referees for detailed reading and comments that are both  thorough and insightful.

{\bf Conflict of interest:} The author declares that there is no conflict of interest.

{\bf Data availability:} No data was used for the research described in the article.


\begin{thebibliography}{20}



\bibitem{A42}
A. D. Aleksandrov, On the theory of mixed volumes. III. Extensions of two theorems of Minkowski on convex polyhedra to arbitrary convex bodies, Mat. Sb. {\bf 3} (1938), 27--46.

\bibitem{A39}
A. D. Aleksandrov, On the surface area measure of convex bodies, Mat. Sb. (N.S.) {\bf 6} (1939), 167--174.

\bibitem{A6}
M. Akman, J. Gong, J. Hineman, J. Lewis\ and\ A. Vogel,  The Brunn-Minkowski inequality and a Minkowski problem for nonlinear capacity, Mem. Amer. Math. Soc. {\bf 275} (2022), no.~1348, vi+115 pp.

\bibitem{B87}
T. Bonnesen\ and\ W. Fenchel, {\it Theory of convex bodies}, translated from the German and edited by L. Boron, C. Christenson and B. Smith, BCS Associates, Moscow, ID, 1987.




\bibitem{B83}
C. Borell, Capacitary inequalities of the Brunn-Minkowski type, Math. Ann. {\bf 263} (1983), no.~2, 179--184.

\bibitem{BC85}
C. Borell, Greenian potentials and concavity, Math. Ann. {\bf 272} (1985), no.~1, 155--160.


\bibitem{B000}
C. Borell, Diffusion equations and geometric inequalities, Potential Anal. {\bf 12} (2000), no.~1, 49--71.




\bibitem{BLZ12}
K. B\"{o}r\"{o}czky, E. Lutwak, D. Yang\ and\ G. Zhang, The log-Brunn-Minkowski inequality, Adv. Math. {\bf 231} (2012), no.~3-4, 1974--1997.

\bibitem{BLZ13}
K. B\"{o}r\"{o}czky, E. Lutwak, D. Yang\ and\ G. Zhang, The logarithmic Minkowski problem, J. Amer. Math. Soc. {\bf 26} (2013), no.~3, 831--852.


\bibitem{BL76}
H.~J. Brascamp\ and\ E.~H. Lieb, On extensions of the Brunn-Minkowski and Pr\'{e}kopa-Leindler theorems, including inequalities for log concave functions, and with an application to the diffusion equation, J. Functional Analysis {\bf 22} (1976), no.~4, 366--389.

\bibitem{C96}
L.~A. Caffarelli, D.~S. Jerison\ and\ E.~H. Lieb, On the case of equality in the Brunn-Minkowski inequality for capacity, Adv. Math. {\bf 117} (1996), no.~2, 193--207.


\bibitem{CLHL20}
S. Chen, Y. Huang, Q. Li\ and\ J. Liu , The $L_ p$-Brunn-Minkowski inequality for $p<1$, Adv. Math. {\bf 368} (2020), 107166, 21 pp.


\bibitem{Chen20}
Z. Chen\ and\ Q. Dai, The $L_ p$ Minkowski problem for torsion, J. Math. Anal. Appl. {\bf 488} (2020), no.~1, 124060, 26 pp.


\bibitem{CY76}
S. Y. Cheng\ and\ S. T. Yau, On the regularity of the solution of the $n$-dimensional Minkowski problem, Comm. Pure Appl. Math. {\bf 29} (1976), no.~5, 495--516.

\bibitem{CA05}
A. Colesanti, Brunn-Minkowski inequalities for variational functionals and related problems, Adv. Math. {\bf 194} (2005), no.~1, 105--140.


\bibitem{CA08}
A. Colesanti, From the Brunn-Minkowski inequality to a class of Poincar\'{e}-type inequalities, Commun. Contemp. Math. {\bf 10} (2008), no.~5, 765--772.


\bibitem{CF10}
A. Colesanti\ and\ M. Fimiani, The Minkowski problem for torsional rigidity, Indiana Univ. Math. J. {\bf 59} (2010), no.~3, 1013--1039.

\bibitem{CS03}
A. Colesanti\ and\ P. Salani, The Brunn-Minkowski inequality for $p$-capacity of convex bodies, Math. Ann. {\bf 327} (2003), no.~3, 459--479.


\bibitem{CLM17}
A. Colesanti, G.~V. Livshyts\ and\ A. Marsiglietti, On the stability of Brunn-Minkowski type inequalities, J. Funct. Anal. {\bf 273} (2017), no.~3, 1120--1139.

\bibitem{C15}
A. Colesanti, K. Nystr\"{o}m, P. Salani, J. Xiao, D. Yang, and G. Zhang, The Hadamard variational formula and the Minkowski problem for $p$-capacity, Adv. Math. {\bf 285} (2015), 1511--1588.


\bibitem{CF24}
G. Crasta\ and\ I. Fragal\`a, Variational worn stones, Arch. Ration. Mech. Anal. {\bf 248} (2024), no.~3, Paper No. 48, 30 pp.

\bibitem{D77}
B. E. J. Dahlberg, Estimates of harmonic measure, Arch. Rational Mech. Anal. {\bf 65} (1977), no.~3, 275--288.

\bibitem{FH24}
N.~F. Fang, J. Hu\ and\ L. Zhao, From the Brunn-Minkowski inequality to a class of generalized Poincar\'{e}-type inequalities for torsional rigidity, J. Geom. Anal. {\bf 34} (2024), no.~4, Paper No. 114, 16 pp.

\bibitem{FJ38}
 W. Fenchel\ and\ B. Jessen, Mengenfunktionen und konvexe K\"{o}rper, Danske Vid. Selsk. Mat.-Fys. Medd. {\bf 16} (1938), 1--31.

\bibitem{FZH20}
Y. Feng, Y. Zhou\ and\ B. He, The $L_{p}$ electrostatic $q$-capacitary Minkowski problem for general measures, J. Math. Anal. Appl. {\bf 487} (2020), no.~1, 123959, 18 pp.


\bibitem{F62}
W. J. Firey, $p$-means of convex bodies, Math. Scand. {\bf 10} (1962), 17--24.


\bibitem{Ga02}
R.~J. Gardner, The Brunn-Minkowski inequality, Bull. Amer. Math. Soc. (N.S.) {\bf 39} (2002), no.~3, 355--405.

\bibitem{G06}
R. J. Gardner, {\it Geometric tomography}, second edition, Encyclopedia of Mathematics and its Applications, 58, Cambridge University Press, New York, 2006.


\bibitem{GT01}
D. Gilbarg\ and\ N. S. Trudinger, {\it Elliptic partial differential equations of second order}, reprint of the 1998 edition, Classics in Mathematics, Springer-Verlag, Berlin, 2001.


\bibitem{GN17}
P. Guan\ and\ L. Ni, Entropy and a convergence theorem for Gauss curvature flow in high dimension, J. Eur. Math. Soc. (JEMS) {\bf 19} (2017), no.~12, 3735--3761.


\bibitem{Hu24}
J. Hu, The torsion log-Minkowski problem, J. Geom. Anal. {\bf 34} (2024), no.~7, Paper No. 216, 26 pp.

\bibitem{HL21}
J. Hu\ and\ J. Liu, On the $L_p$ torsional Minkowski problem for $0<p<1$, Adv. in Appl. Math. {\bf 128} (2021), Paper No. 102188, 22 pp.

\bibitem{HI24}
Y. Hu\ and\   M.~N. Ivaki,  On the uniqueness of solutions to the isotropic $L_p$ dual Minkowski problem, Nonlinear Anal. {\bf 241} (2024), Paper No. 113493, 6 pp.

\bibitem{HSX18}
Y. Huang, C. Song\ and\ L. Xu, Hadamard variational formulas for $p$-torsion and $p$-eigenvalue with applications, Geom. Dedicata {\bf 197} (2018), 61--76.

\bibitem{HYZ18}
H. Hong, D. Ye\ and\ N. Zhang, The $p$-capacitary Orlicz-Hadamard variational formula and Orlicz-Minkowski problems, Calc. Var. Partial Differential Equations {\bf 57} (2018), no.~1, Paper No. 5, 31 pp.

\bibitem{Iva23} M.~N. Ivaki, Uniqueness of solutions to a class of non-homogeneous curvature problems, arXiv:2307.06252 (2023).

\bibitem{IM24-b}
M.~N. Ivaki\ and\ E. Milman, Uniqueness of solutions to a class of isotropic curvature problems, Adv. Math. {\bf 35}(2023), Paper No. 109350, 11pp.

\bibitem{IM24}
M.~N. Ivaki\ and\ E. Milman, $L^p$-Minkowski problem under curvature pinching, Int. Math. Res. Not. IMRN {\bf 2024}, no.~10, 8638--8652.

\bibitem{J91}
D. Jerison, Prescribing harmonic measure on convex domains, Invent. Math. {\bf 105} (1991), no.~2, 375--400.

\bibitem{J96}
D. Jerison, A Minkowski problem for electrostatic capacity, Acta Math. {\bf 176} (1996), no.~1, 1--47.

\bibitem{JD96}
D. Jerison, The direct method in the calculus of variations for convex bodies, Adv. Math. {\bf 122} (1996), no.~2, 262--279.

\bibitem{KM18}
A.~V. Kolesnikov\ and\ E. Milman, Poincar\'{e} and Brunn-Minkowski inequalities on the boundary of weighted Riemannian manifolds, Amer. J. Math. {\bf 140} (2018), no.~5, 1147--1185.

\bibitem{KM22}
A.~V. Kolesnikov\ and\ E. Milman, Local $L^p$-Brunn-Minkowski inequalities for $p<1$, Mem. Amer. Math. Soc. {\bf 277} (2022), no.~1360, v+78 pp.


\bibitem{LU23}
D. Langharst\ and\ J. Ulivelli, The (self-similar, variational) Rolling Stones, Int. Math. Res. Not. IMRN {\bf 2024}, no.~11, 9178--9193.

\bibitem{L77}
J. L. Lewis, Capacitary functions in convex rings, Arch. Rational Mech. Anal. {\bf 66} (1977), no.~3, 201--224.

\bibitem{LN07}
J.~L. Lewis\ and\ K. Nystr\"{o}m, Boundary behaviour for $p$ harmonic functions in Lipschitz and starlike Lipschitz ring domains, Ann. Sci. \'{E}cole Norm. Sup. (4) {\bf 40} (2007), no.~5, 765--813.

\bibitem{LN08}
J.~L. Lewis\ and\ K. Nystr\"{o}m, Regularity and free boundary regularity for the $p$ Laplacian in Lipschitz and $C^1$ domains, Ann. Acad. Sci. Fenn. Math. {\bf 33} (2008), no.~2, 523--548.



\bibitem{L93}
E. Lutwak, The Brunn-Minkowski-Firey theory. I. Mixed volumes and the Minkowski problem, J. Differential Geom. {\bf 38} (1993), no.~1, 131--150.

\bibitem{M897}
  H. Minkowski, Allgemeine Lehrs\"{a}tze \"{u}ber die convexen Polyeder, Nachr. Ges. Wiss. G\"{o}ttingen (1897), 198--219.


\bibitem{M903}
H. Minkowski, Volumen und Oberfl\"{a}che, Math. Ann. {\bf 57} (1903), no.~4, 447--495.


\bibitem{S88}
S. Sakaguchi, Concavity properties of solutions to some degenerate quasilinear elliptic Dirichlet problems, Ann. Scuola Norm. Sup. Pisa Cl. Sci. (4) {\bf 14} (1987), no.~3, 403--421 (1988).


\bibitem{S14}
R. Schneider, {\it Convex bodies: the Brunn-Minkowski theory}, second expanded edition, Encyclopedia of Mathematics and its Applications, 151, Cambridge University Press, Cambridge, 2014.

\bibitem{U91}
J. I. E. Urbas, An expansion of convex hypersurfaces, J. Differential Geom. {\bf 33} (1991), no.~1, 91--125.


\bibitem{XX22}
G. Xiong\ and\ J. Xiong, The logarithmic capacitary Minkowski problem for polytopes, Acta Math. Sin. (Engl. Ser.) {\bf 38} (2022), no.~2, 406--418.

\bibitem{Xiong19}
G. Xiong, J. Xiong\ and\ L. Xu, The $L_{p}$ capacitary Minkowski problem for polytopes, J. Funct. Anal. {\bf 277} (2019), no.~9, 3131--3155.


\bibitem{ZX14}
D. Zou\ and\ G. Xiong, The $L_{p}$ Minkowski problem for the electrostatic $\mathcal{P}$-capacity, J. Differential Geom. {\bf 116} (2020), no.~3, 555--596.








\end{thebibliography}
\end{document}